\documentclass[11pt,reqno]{amsart}

\usepackage[a4paper,margin=3cm]{geometry}
\usepackage[dvipsnames]{xcolor}
\usepackage[english]{babel}
\usepackage[babel]{microtype}

\usepackage{amsmath}
\usepackage{amssymb}
\usepackage{amsthm}
\usepackage[bb=boondox]{mathalpha}

\theoremstyle{plain}
\newtheorem{theorem}{Theorem}[section]
\newtheorem{proposition}[theorem]{Proposition}
\newtheorem*{proposition*}{Proposition}
\newtheorem*{theorem*}{Theorem}
\newtheorem{lemma}[theorem]{Lemma}
\newtheorem*{lemma*}{Lemma}
\newtheorem{corollary}[theorem]{Corollary}

\theoremstyle{definition}

\newtheorem{remark}{Remark}
\newtheorem*{remark*}{Remark}

\usepackage{mathrsfs}
\usepackage{stmaryrd}
\usepackage{tikz}
\usetikzlibrary{arrows.meta,positioning,calc,decorations.pathreplacing,%
                patterns,fit,backgrounds,shapes.geometric,shapes.misc}
\usepackage{pgfplots}
\pgfplotsset{compat=1.18}
\colorlet{cladecol}{MidnightBlue}         
\colorlet{hicol}{BrickRed}                
\colorlet{ghostcol}{black!22}             
\colorlet{dustcol}{black!45}              
\colorlet{masscol}{ForestGreen!70!black}  
\colorlet{oldrevisioncol}{black!45}       
\colorlet{newrevisioncol}{BrickRed!85!black} 
\usepackage{enumerate}
\usepackage{mathtools}
\usepackage[full]{textcomp}
\mathtoolsset{showonlyrefs}
\numberwithin{equation}{section}

\usepackage[T1]{fontenc}
\usepackage{lmodern}
\usepackage[sb]{libertinus}
\usepackage[scaled=0.95]{cabin}
\usepackage[utf8]{inputenc}
\DeclareMathSizes{10.95}{10.4}{7.6}{5.7}    
\DeclareMathSizes{10}{9.5}{6.65}{4.75}      
\DeclareMathSizes{9}{8.55}{6.65}{4.75}      
\DeclareMathSizes{12}{11.4}{7.6}{5.7}       
\DeclareMathSizes{14.4}{13.68}{9.5}{6.65}   

\usepackage[symbol]{footmisc}
\usepackage{comment}
\usepackage{appendix}
\usepackage{graphicx}
\usepackage{float}

\usepackage[
  breaklinks=true,
  bookmarksdepth=2,
  pagebackref=true,
]{hyperref}
\hypersetup{
  colorlinks=true,
  pdfpagemode=UseNone,
  citecolor=ForestGreen,
  linkcolor=MidnightBlue,
  urlcolor=RoyalBlue,
  pdfstartview=FitW
}

\def\N{\mathbb{N}}

\def\R{\mathbb{R}}
\def\E{\mathbf{E}}
\renewcommand{\P}{\mathbf{P}}

\newcommand{\ind}[1]{\mathbf{1}_{\{#1\}}}
\newcommand{\res}[1]{\!\restriction_{#1}}

 \DeclareMathOperator*{\supp}{supp}
\newcommand*{\dif}{\ensuremath{\mathop{}\!\mathrm{d}}}
\renewcommand{\bar}[1]{\mkern 1.5mu\overline{\mkern-1.5mu#1\mkern-1.5mu}\mkern 1.5mu}
\renewcommand{\hat}[1]{\widehat{#1}}

\newcommand{\Poi}{\operatorname{Poi}}
\newcommand{\PPP}{\operatorname{PPP}}

\newcommand{\DPPP}{\operatorname{DPPP}}
\newcommand{\CTCS}{\operatorname{CTCS}}
\newcommand{\DTCS}{\operatorname{DTCS}}
\newcommand{\Mloc}{\mathcal M_{\mathrm p}}

\definecolor{revisionR2}{rgb}{0.612,0.000,0.000}

\title[A freezing transition in fragmentation trees]
{Extremal separation times and clade-count dynamics in fragmentation trees: a freezing transition}
\author{Heng Ma}
\address[Heng Ma]
{Faculty of Data and Decision Sciences, Technion - Israel's Institute of
Technology, Haifa, 32000, Israel.}
\email{hengmamath(at)gmail(dot)com}
\urladdr{\url{https://hengmamath.github.io}}
\date{\today}

\begin{document}

\begin{abstract}
Motivated by the continuous-time critical beta-splitting tree, we investigate chronological 
trees recording the successive block splits and their times in homogeneous
fragmentation processes restricted to $n$ labels. The separation time of a subset is the first time its labels cease to lie
in a common block. 
For each fixed integer $q\geq2$,  we study   extremal separation times of $q$-element subsets via their associated point process.
 
Under mild conditions,  as $n\to\infty$, the last such  time has a randomly shifted Gumbel limit after deterministic
centering. Above a  threshold
$\theta_*$,  the  coefficients of  both the leading   $\log n$ term and the $\log\log n$ correction in the centering become independent of $q$, as does the limiting law up to deterministic translation.
The extremal process converges  to a randomly shifted
Poisson point process, without and with decorations,   for $q \le \theta_*$ and $q>\theta_*$ respectively. 
Finally, in the same centered time window, the number of size-$q$ blocks (clades) converges  to a pure-death process for $q\leq\theta_*$, and to a process whose trajectories are non-monotone with positive probability for $q>\theta_*$.
\end{abstract}

   \maketitle

\section{Introduction and main results}
\label{sec:ctcs-question}
 
\subsection{Continuous-time critical beta-splitting trees: a first result} Introduced by Aldous~\cite{AldousCladograms}, the critical beta-splitting tree is a toy model for phylogenetic trees designed to capture the uneven splits observed in real-world data.
Its continuous-time version, denoted $\CTCS(n)$, is constructed recursively by splitting clades over time as follows.  

At time $t=0$, there is a single clade $[n]:=\{1,\dots,n\}$  of all
$n$ labels. A clade of size $m\geq2$ waits  an exponential time of rate
$
  h_{m-1}:=\sum_{j=1}^{m-1}\frac{1}{j} 
  $
and then splits into a left and a right sub-clade, of sizes $i$ and $m-i$
respectively, with probability
\begin{equation}\label{eq:intro-ctcs-rule}
  q(m,i)=\frac1{2h_{m-1}}\Bigl(\frac1i+\frac1{m-i}\Bigr),
  \qquad 1\leq i\leq m-1.
\end{equation}  
The left part is chosen uniformly among the $\binom{m}{i}$ subsets of size $i$, and the two children evolve independently by the same rule.  
Since singletons never split, the process ends after exactly $n-1$ splits, when all clades are singletons. 

The family $\mathcal{C}_n$ of all clades that appear during the evolution form a natural tree structure
induced by the inclusion order: the root is $[n]$, its leaves are the
singletons, and the parent of a non-root clade $\mathsf{C}$, denoted by $\mathrm{par}(\mathsf{C})$, is its minimal strict
superset in $\mathcal{C}_n$. Moreover, the continuous time setting gives 
each clade $\mathsf C $  chronological marks.  Let
$b(\mathsf C)$ be its birth time, with $b([n])=0$.  If
$|\mathsf C|\geq2$, write $s(\mathsf C)$ for its splitting time.    At time $s(\mathsf C)$ it disappears and its two children are born and
become present.     
Represent each non-singleton clade $\mathsf{C}$ by the interval $[b(\mathsf{C}), s(\mathsf{C}))$ and each singleton $\{i\}$ by $\{ b(\{i\})\}$,  connect it to their parent at level
$b(\mathsf C)=s(\operatorname{par}(\mathsf C))$. 
This yields the chronological tree $\CTCS(n)$, adapting the chronological tree
definition in \cite[Section 2.2]{LambertContour}. 
Figure~\ref{fig:ctcs-tree} (a) illustrates this representation for $n=9$.

\begin{figure}[b]
\centering
\begin{tikzpicture}[
  x=0.8cm, y=1.1cm,
  font=\footnotesize,
  br/.style={line width=.65pt,color=cladecol},          
  hbr/.style={line width=1.1pt,color=hicol},            
  gbr/.style={line width=.6pt,color=ghostcol},          
  cl/.style={rectangle,fill=cladecol,inner sep=1.35pt}, 
  lf/.style={circle,draw=cladecol,fill=white,line width=.5pt,inner sep=1.15pt},
  hlf/.style={circle,fill=hicol,inner sep=1.5pt},       
  term/.style={circle,fill=cladecol,inner sep=1.9pt},   
  lev/.style={dashed,dash pattern=on 2.4pt off 1.8pt,line width=.5pt},
  lab/.style={inner sep=1.2pt},
  clab/.style={inner sep=1.1pt,font=\scriptsize,text=cladecol,fill=white},
]

\begin{scope}
  \draw[line width=.5pt,-{Straight Barb[length=3.6pt,width=3pt]}]
        (-1.15,-0.18) -- (-1.15,3.5);
  \foreach \t in {0,1,2,3}{
    \draw[line width=.5pt] (-1.27,\t) -- (-1.03,\t);
    \node[lab,anchor=east] at (-1.30,\t) {$\t$};}
  \node[lab,anchor=south] at (-1.15,3.56) {$t$};

  \draw[lev,color=hicol]  (-1.15,1.45) -- (6.05,1.45);
  \node[lab,anchor=south west,text=hicol,fill=white] at (-1.05,1.45) {$T_{3,5,7}$};
  \draw[lev,color=cladecol] (-1.15,3.10) -- (6.05,3.10);
  \node[lab,anchor=south west,text=cladecol,fill=white] at (-1.05,3.10) {$D^{*}_{9,1}$};

  \draw[br] (3.70,0) -- (3.70,0.30);                    
  \draw[br] (2.32,0.30) -- (5.08,0.30);
  \draw[br] (2.32,0.30) -- (2.32,0.75);                 
  \draw[br] (1.14,0.75) -- (3.50,0.75);
  \draw[br] (5.08,0.30) -- (5.08,0.95);                 
  \draw[br] (4.55,0.95) -- (5.60,0.95);
  \draw[br] (1.14,0.75) -- (1.14,1.45);                 
  \draw[hbr] (0.35,1.45) -- (1.93,1.45);                
  \draw[br] (4.55,0.95) -- (4.55,2.05);                 
  \draw[br] (4.20,2.05) -- (4.90,2.05);
  \draw[br] (0.35,1.45) -- (0.35,2.35);                 
  \draw[br] (0.00,2.35) -- (0.70,2.35);
  \draw[br] (1.93,1.45) -- (1.93,1.95);                 
  \draw[br] (1.40,1.95) -- (2.45,1.95);
  \draw[br] (2.45,1.95) -- (2.45,3.10);                 
  \draw[br] (2.10,3.10) -- (2.80,3.10);

  \foreach \p in {(3.70,0),(2.32,0.30),(5.08,0.30),(1.14,0.75),(4.55,0.95),%
                  (0.35,1.45),(1.93,1.45),(2.45,1.95)} {\node[cl] at \p {};}

  \node[hlf] at (0.00,2.35) {}; \node[lab,anchor=south,yshift=2.5pt,color=hicol] at (0.00,2.35) {$7$};
  \node[lf]  at (0.70,2.35) {}; \node[lab,anchor=south,yshift=2.5pt] at (0.70,2.35) {$2$};
  \node[hlf] at (1.40,1.95) {}; \node[lab,anchor=south,yshift=2.5pt,color=hicol] at (1.40,1.95) {$5$};
  \node[lf]  at (2.10,3.10) {}; \node[lab,anchor=south,yshift=2.5pt] at (2.10,3.10) {$9$};
  \node[hlf] at (2.80,3.10) {}; \node[lab,anchor=south,yshift=2.5pt,color=hicol] at (2.80,3.10) {$3$};
  \node[lf]  at (3.50,0.75) {}; \node[lab,anchor=west,xshift=3pt] at (3.50,0.75) {$8$};
  \node[lf]  at (4.20,2.05) {}; \node[lab,anchor=south,yshift=2.5pt] at (4.20,2.05) {$1$};
  \node[lf]  at (4.90,2.05) {}; \node[lab,anchor=south,yshift=2.5pt] at (4.90,2.05) {$6$};
  \node[lf]  at (5.60,0.95) {}; \node[lab,anchor=south,yshift=2.5pt] at (5.60,0.95) {$4$};

  \node[clab,anchor=west,xshift=3pt]     at (3.70,0.06) {$[9]$};                
  \node[clab,anchor=east]                at (2.26,0.52) {$\{2,3,5,7,8,9\}$};    
  \node[clab,anchor=west]                at (5.12,0.52) {$\{1,4,6\}$};          
  \node[clab,anchor=west]                at (1.18,1.10) {$\{2,3,5,7,9\}$};      
  \node[clab,anchor=west]                at (4.59,1.70) {$\{1,6\}$};            
  \node[clab,anchor=east]                at (0.31,1.98) {$\{2,7\}$};            
  \node[clab,anchor=west]                at (1.97,1.70) {$\{3,5,9\}$};          
  \node[clab,anchor=west]                at (2.49,2.52) {$\{3,9\}$};            

  \node[anchor=north,font=\small] at (2.45,-0.32) {(a)};
\end{scope}

\begin{scope}[xshift=7.5cm]
  \draw[gbr] (4.55,0.95) -- (4.55,2.05);
  \draw[gbr] (4.20,2.05) -- (4.90,2.05);
  \draw[gbr] (0.35,1.45) -- (0.35,2.35);
  \draw[gbr] (0.00,2.35) -- (0.70,2.35);
  \draw[gbr] (2.45,1.95) -- (2.45,3.10);
  \draw[gbr] (2.10,3.10) -- (2.80,3.10);
  \foreach \p in {(0.00,2.35),(0.70,2.35),(2.10,3.10),(2.80,3.10),(4.20,2.05),(4.90,2.05)}
     {\node[circle,draw=ghostcol,fill=white,line width=.5pt,inner sep=1.15pt] at \p {};}

  \draw[lev,color=cladecol] (-1.30,1.95) -- (6.05,1.95);
  \node[lab,anchor=south west,text=cladecol,fill=white] at (-1.25,1.95) {$D^{*}_{9,2}$};
  \draw[lev,color=black!35] (-1.30,3.10) -- (6.05,3.10);
  \node[lab,anchor=south west,text=black!45,fill=white] at (-1.25,3.10) {$D^{*}_{9,1}$};

  \draw[br] (3.70,0) -- (3.70,0.30);
  \draw[br] (2.32,0.30) -- (5.08,0.30);
  \draw[br] (2.32,0.30) -- (2.32,0.75);
  \draw[br] (1.14,0.75) -- (3.50,0.75);
  \draw[br] (5.08,0.30) -- (5.08,0.95);
  \draw[br] (4.55,0.95) -- (5.60,0.95);
  \draw[br] (1.14,0.75) -- (1.14,1.45);
  \draw[br] (0.35,1.45) -- (1.93,1.45);
  \draw[br] (1.93,1.45) -- (1.93,1.95);
  \draw[br] (1.40,1.95) -- (2.45,1.95);

  \foreach \p in {(3.70,0),(2.32,0.30),(5.08,0.30),(1.14,0.75),(1.93,1.45)}
     {\node[cl] at \p {};}

  \node[clab,anchor=west,xshift=3pt]     at (3.70,0.06) {$[9]$};
  \node[clab,anchor=east]                at (2.26,0.52) {$\{2,3,5,7,8,9\}$};
  \node[clab,anchor=west]                at (5.12,0.52) {$\{1,4,6\}$};
  \node[clab,anchor=west]                at (1.18,1.10) {$\{2,3,5,7,9\}$};
  \node[clab,anchor=west]                at (1.97,1.66) {$\{3,5,9\}$};

  \node[term] at (3.50,0.75) {}; \node[lab,anchor=west,xshift=3pt] at (3.50,0.75) {$\{8\}$};
  \node[term] at (5.60,0.95) {}; \node[lab,anchor=west,xshift=2.5pt]  at (5.60,0.95) {$\{4\}$};
  \node[term] at (4.55,0.95) {}; \node[lab,anchor=south,yshift=2.5pt] at (4.55,0.95) {$\{1,6\}$};
  \node[term] at (0.35,1.45) {}; \node[lab,anchor=east,xshift=-3pt] at (0.35,1.45) {$\{2,7\}$};
  \node[term] at (1.40,1.95) {}; \node[lab,anchor=east,xshift=-2.5pt,fill=white] at (1.40,1.95) {$\{5\}$};
  \node[term] at (2.45,1.95) {}; \node[lab,anchor=south west,xshift=2pt,fill=white] at (2.45,1.95) {$\{3,9\}$};

  \node[anchor=north,font=\small] at (2.75,-0.32) {(b)};
\end{scope}
\end{tikzpicture}
\caption{ 
A realization of $\CTCS(9)$. Vertical segments represent clade labels.
\textbf{(a)} The clade $\{2,3,5,7,9\}$ is the MRCA clade of
$A=\{3,5,7\}$ (red). Nine triples share this MRCA clade, yielding
$D_{9,2}^{(2)}=\cdots=D_{9,2}^{(10)}$. 
\textbf{(b)} Grey marks are deleted from the $2$-stopped tree, whose height
$D^*_{9,2}$ is attained when $\{3,5,9\}$ splits.}
\label{fig:ctcs-tree}
\end{figure}

According to the chronological tree representation, the \emph{height} of  $\CTCS(n)$ is given by 
\[  D_n^{*} = \max_{1 \le i \le n} b(\{i\}) = \max_{ \mathsf{C} \in  \mathcal{C}_n :\: |\mathsf{C}|>1} s( \mathsf{C} ) .  \]
More generally, for $1\leq r<n$, the \emph{$r$-shattering time}
$D^*_{n,r}$ is the first time at which every existing clade has size
at most $r$. Equivalently, it is the last splitting time of a clade
of size greater than $r$:
\[
  D^*_{n,r}
  = \max_{\mathsf C\in\mathcal C_n:\,|\mathsf C|>r}
      s(\mathsf C).
\]
In the chronological tree representation, this is the height of
the \emph{$r$-stopped tree}, obtained by truncating each root-to-leaf
path at the birth of its first clade of size at most $r$;
see Figure~\ref{fig:ctcs-tree}(b).
In particular, $D^*_{n,1}=D_n^*$.

In \cite[Open Problem~3]{AldousJansonII},  Aldous and Janson  asked for the first-order
asymptotics of the maximum:
 find constant $c$ such that $D^{*}_n / \log n \to c$ in probability.
The answer is in fact essentially implicit in the existing literature.
Aldous and Janson~\cite{AldousJansonIII} later proved that $\CTCS(n)$
is a homogeneous fragmentation process, while earlier result of Joseph~\cite{Joseph}
studied   heights of generation-indexed  trees associated with such
fragmentation processes. A continuous-time adaptation of Joseph's
argument then yields 
\[ {D_n^*}/{\log n} \xrightarrow[n\to\infty]{\mathrm{a.s.}}  2  \] 
thereby resolving \cite[Open Problem~3]{AldousJansonII}.  Joseph's 
result also showed that, for every fixed $r \ge 2$, 
$ {D^{*}_{n,r}}/{\log n} \to {\theta_*}/{\kappa_*} $,
where $\theta_*/\kappa_*\in(0,2)$ is independent of $r$. 
While this manuscript was in preparation, Chorbadzhiyska, Minchev, and
Savov~\cite[Theorem~2.12]{ChorbadzhiyskaMinchevSavov} established a
mixed Gumbel limit for  $D_n^*$: They
showed that there is a positive random variable
$\mathsf W_{-1}(\infty)$ with unit mean such that, for every
$x\in\mathbb R$,
\[
  \P(D_n^*-2\log n\leq x)
  \xrightarrow{n\to\infty}
  \E[\exp\{-\tfrac12\mathsf W_{-1}(\infty)e^{-x}\}].
\]
 
The present paper works within Bertoin's continuous-time  homogeneous fragmentation processes~\cite{Bertoin}, while adopting the
fragmentation-tree perspective underlying the generation-indexed models of Joseph~\cite{Joseph}. Our
motivating example $\CTCS(n)$ fits naturally into this setting. 
We give a refined analysis of $D_n^*$ and its generalization $D_{n,r}^*$, identifying their limiting distributions after suitable centering. Furthermore, interpreting $D_{n,r}^*$ as the maximum, over all $A \subset [n]$ with $|A| = r+1$, of the splitting time $s(\mathsf{C}_A)$ of $\mathsf{C}_A$, the most recent common ancestor (MRCA) clade of the leaves indexed by $A$, we introduce its decreasing rearrangement, with multiplicities,
\[
D_{n,r}^{*} = D_{n,r}^{(1)} \geq D_{n,r}^{(2)} \geq \cdots \geq D_{n,r}^{(j)} \geq \cdots  
\]
of the splitting times $\{s(\mathsf{C}_A) : A \subset [n],\ |A| = r+1\}$.
We also establish the joint convergence  of the gaps between consecutive terms of this sequence.

\begin{theorem*}[$\CTCS(n)$]
The following  phase transition holds.
\begin{enumerate}[(i)]
\item Let $(\mathsf{E}_j)_{j\geq1}$ be independent random
variables with $\mathsf{E}_j\sim\operatorname{Exp}(j)$. There exist a
standard Gumbel random variable $G$ and a positive random variable $W_2$
with $\E[W_2]=1$, such that $W_2$ is independent of
$(G,(\mathsf{E}_j)_{j\geq1})$ and, for every fixed $k\geq1$,  
\begin{equation}
 \Bigl( 
   D_{n,1}^*-2\log n,
   \bigl(D_{n,1}^{(j)}-D_{n,1}^{(j+1)}\bigr)_{j=1}^k
  \Bigr)
 \xrightarrow[n\to\infty]{\mathrm{law}}
\Bigl( 
   G+\log\bigl(\tfrac12 W_2\bigr),
   (\mathsf{E}_{j})_{j=1}^k
  \Bigr).
\end{equation} 

\item There exist absolute constants $\theta_*,\kappa_*>0$ and a
positive random variable $Z$ satisfying the following.  For
any fixed $r\geq2$, set $q=r+1$.  There exist a constant
$C_q^\star>0$, a standard Gumbel random variable $G_r$, and a random
sequence $(\mathsf{E}_{r,j})_{j\geq1}$ (not necessarily  
mutually independent), such that
$(G_r,(\mathsf{E}_{r,j})_{j\geq1})$ is independent of $Z$ and, for every
fixed $k\geq1$,  
\begin{equation}
 \Bigl( 
   \kappa_*D_{n,r}^*-\theta_*\log n+\frac32\log\log n,
   \bigl(D_{n,r}^{(j)}-D_{n,r}^{(j+1)}\bigr)_{j=1}^k
  \Bigr)
 \xrightarrow[n\to\infty]{\mathrm{law}}
 \Bigl(  
   G_r+\log(C_q^\star Z),
   (\mathsf{E}_{r,j})_{j=1}^k
  \Bigr).
\end{equation} 
Moreover, in contrast to $(\mathsf{E}_{j})_{j\geq1}$, each limiting gap
$\mathsf{E}_{r,j}$ has an atom at zero:
\begin{equation}
\P(\mathsf{E}_{r,j}=0)>0
\qquad\text{for every }j\geq1.
\end{equation}
\end{enumerate}
\end{theorem*}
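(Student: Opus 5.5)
We will derive the theorem as the $\CTCS(n)$ specialization of the general results for homogeneous fragmentation trees. First we embed $\CTCS(n)$ in Bertoin's framework. By Aldous--Janson, $\CTCS(n)$ is the restriction to $[n]$ of a homogeneous fragmentation of $\N$ with no erosion and a binary dislocation measure $\nu(\dif s)=\frac{1}{s(1-s)}\dif s$ (suitably symmetrized). The relevant Laplace exponent is
\[
\Phi(\theta)=\int_0^1\bigl(1-s^{\theta}-(1-s)^{\theta}\bigr)\,\frac{\dif s}{s(1-s)}\quad(\theta>0).
\]
For $q\ge2$ labels, the rate at which a block of asymptotic frequency $x$ still contains some $q$-set whose MRCA clade has not yet split is governed by $\Phi(q-1)$ through the moment $\E\sum_i |\Pi_i(t)|^{q}=e^{-t\Phi(q-1)}$. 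Write $\mathcal N_q(t)$ for the number of $q$-element subsets of $[n]$ still lying in a common block at time $t$. Then
\[
\E\,\mathcal N_q(t)\approx \binom nq e^{-t\Phi(q-1)},
\]
so the naive centering is $t\approx (q/\Phi(q-1))\log n$. The optimizing parameter is $\theta_*$, the unique solution of $\theta\Phi'(\theta)=\Phi(\theta)+\dots$, i.e.\ the tangency point giving $\kappa_*=\Phi'(\theta_*)$ type constants as in Joseph's height result. Concretely, I would check that the first-moment exponent $\sup_\theta$ of $\theta\log n-t\Phi(\theta-1)$ is attained at $\theta=q$ when $q\le\theta_*$ and at $\theta_*$ when $q>\theta_*$. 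For $q=2$, which is the case $r=1$, a direct computation gives $\Phi(1)=1$ with slope $2$, so $q=2\le\theta_*$ and the centering is $2\log n$ with no $\log\log$ term.

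\textbf{Part (i).} Use the many-to-one formula along the tagged fragment. Condition on $\mathcal F_{t_0}$ at a fixed large time $t_0$. The number of pairs that are unseparated at time $t_0+s$ is then a sum over blocks of independent contributions, each a Poisson-like count with mean $\tfrac12 n^2|B|^2e^{-s}$ after the fragmentation of $B$. The additive martingale $\sum_i|\Pi_i(t)|^{2}e^{t\Phi(1)}$ converges to a limit $W_2$ with mean $1$, and it is uniformly integrable since $\theta=2$ lies below the critical parameter. A second-moment/Poisson approximation (Chen--Stein) for the separation times of pairs inside distinct small blocks then shows that the point process $\sum_{\{i,j\}}\delta_{s(\mathsf C_{\{i,j\}})-2\log n}$ converges to a Cox process with intensity $\tfrac12W_2e^{-x}\dif x$. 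The key point is that pairs sharing an MRCA with a third label contribute negligibly because the clade has size $2$ at separation with high probability. The maximum then has the law $G+\log(\tfrac12W_2)$, and the gaps of a Poisson process with intensity $e^{-x}$ are $\operatorname{Exp}(j)$, independent of the shift.

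\textbf{Part (ii).} For $q>\theta_*$, the last split of a clade of size at least $q$ comes from the extremal lineages of the fragmentation, which behave like a branching random walk near its maximum. I would use the derivative martingale $Z$ at $\theta_*$: $\kappa_*t-\theta_*\log n$ plays the role of the BRW maximum, and the $\tfrac32\log\log n$ term is the Bramson correction. I would localize on blocks of size about $n e^{-\kappa_* t}$ via ballot-type (Brownian meander) estimates, as in Aïdékon's method. Each such extremal block of moderate size $m$ then generates independently a cluster of separation times of its $q$-subsets, given by the finite $\CTCS(m)$ behaviour. This yields a decorated Poisson process with intensity $C_q^\star Z e^{-x}\dif x$, where $C_q^\star$ comes from the expected decoration. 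All $q$-subsets inside one clade share the same MRCA split time, so the decorations have multiple atoms at $0$. This gives the multiplicities and hence $\P(\mathsf E_{r,j}=0)>0$. One also needs to verify $q=3>\theta_*$ for CTCS, a numerical check on $\Phi$.

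The main obstacle will be part (ii): proving tightness and convergence of the decorated extremal process. Specifically, we must control the second moment via truncation on lineages staying below a barrier, and show that the cluster of a single extremal block converges in law to a nondegenerate decoration. I would attack this first through a spine decomposition and the peeling lemma of Biggins--Kyprianou adapted to fragmentations.
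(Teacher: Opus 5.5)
Your part (i) follows the paper's route in spirit: a conditional Chen--Stein bound, the additive martingale $W_2$, then the order statistics of a Cox process. However, the conditioning you choose does not support the Chen--Stein step. Given $\mathcal F_{t_0}$, two disjoint pairs lying in the same block at time $t_0$ stay dependent through that block's future fragmentation. So there is no dissociated neighbourhood structure, and ``each block contributes a Poisson-like count'' is just the original problem again, inside a smaller block.

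The paper instead conditions on the entire interval fragmentation $\mathcal I$, with labels i.i.d.\ uniform and independent of $\mathcal I$. Then $\P(T_A>t\mid\mathcal I)=S_2(t)$, indicators of disjoint pairs are conditionally independent, and an overlapping pair costs at most $S_3\le X_1S_2$. Making that overlap term negligible requires $nX_1(t_{n,2}(x))\to0$. You assert this (``the clade has size $2$'') but do not prove it; it follows from $\Delta(2)>0$ via $\E[(nX_1(t))^{2a}]\le n^{2a}e^{-\kappa(2a)t}$ for some $a>1$ with $\kappa(2a)>a\kappa(2)$.

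Your constants are also off. Your displayed $\Phi$ gives $\Phi(1)=0$ and $\Phi(2)=2$. The correct exponent is $\kappa(\theta)=\frac12\int_0^1\frac{1-x^\theta-(1-x)^\theta}{x(1-x)}\dif x=\psi(\theta)-\psi(1)$, with $\E S_q(t)=e^{-t\kappa(q)}$, and $\kappa_*$ must be $\kappa(\theta_*)=\theta_*\kappa'(\theta_*)$, not $\kappa'(\theta_*)$. The regime check is then simply $\Delta(2)=\pi^2/3-3>0>\pi^2/2-21/4=\Delta(3)$, so $2<\theta_*<3$.

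The genuine gap is part (ii): the decorated Poisson limit is asserted, and the route you sketch would not produce it as stated. At time $t_n(x)$ the extremal fragments have mass $\asymp1/n$, hence $O(1)$ labels, not $ne^{-\kappa_*t}$. They are also not Poissonian: they come in clusters given by the BRW decoration $\mathcal D$. So a separation-time cluster must aggregate all fragments of one mass-cluster, not ``a single extremal block''. The missing steps are the following.

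(a) Convergence of $\sum_i\delta_{nX_i(t_n(a))}$ jointly with $Z(t_n(a))$ and with $\langle g,\cdot\rangle$ for $g(y)\lesssim y^q$ near $0$. This is exactly where $q>\theta_*$ enters, since it controls the infinitely many small fragments. The paper obtains it by applying Madaule's theorem to an $h$-skeleton BRW, after checking the boundary-case and nonlattice conditions, rather than redoing spine and peeling estimates.

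(b) Poissonization of the sample size, so that label counts in distinct fragments are independent $\Poi(nX_i)$ and the conditional Laplace functional is $\exp\{-\langle\phi^{(a)}_{q,f},\mathcal E^{\mathrm{fr}}_{n,a}\rangle\}$, followed by de-Poissonization.

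(c) A structural argument giving $\DPPP(C_q^\star Ze^{-x}\dif x,\mathcal D_q^\star)$ with decoration independent of $Z$: exp-$1$-stability of $\vartheta_{-\log(C_q^\star Z)}\Xi_q$ combined with Maillard's LePage representation.

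Here $C_q^\star=C_V\theta_*v_*^{3/2}\int_0^\infty[1-\E\prod_k\pi_q(ye^{d_k/\theta_*})]y^{-\theta_*-1}\dif y$. It is the intensity of mass-clusters in which some fragment receives at least $q$ labels, not an expected decoration, and its finiteness needs proof.

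Finally, your argument for $\P(\mathsf E_{r,j}=0)>0$ is incorrect. The $q$-subsets of one clade do not share a separation time; only those cut by the same split do. Moreover, a multiple top atom in the decoration would give at most the case $j=1$. For every $j$ you need, with positive probability, exactly $j-1$ simple atoms above a multiple one. The paper gets this by showing the cluster intensity charges both single-atom clusters and pure $(q+1)$-fold clusters. The latter uses the split rate $\rho_q=(q+1)\kappa(q)-q\kappa(q+1)>0$, which holds because $\kappa(\theta)/\theta$ is strictly decreasing for $\theta>\theta_*$.
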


\begin{remark}[Concurrent work]
While this manuscript was in preparation, Chorbadzhiyska, Minchev, and
Savov~\cite[Theorem~2.12]{ChorbadzhiyskaMinchevSavov} independently proved a
law of large numbers and a mixed Gumbel limit for the height of the
continuous-time beta-splitting tree, for every $\beta>-2$.  At $\beta=-1$,
where that tree is $\CTCS(n)$, their limit law reads
\[
  \P\bigl(D_n^*-2\log n\leq x\bigr)
  \xrightarrow{n\to\infty}
  \E\bigl[\exp\{-\tfrac12 W_2e^{-x}\}\bigr],
\]
the random variable appearing there being, in our notation, the
additive-martingale limit $W_2$.  This is the height marginal in
part~\textup{(i)} above, i.e.\ the case $q=2$ of
Corollary~\ref{thm:three-regimes}.  Both proofs of this common marginal
proceed via Kingman's paintbox representation and a Chen--Stein Poisson
approximation.  The frozen thresholds $q\geq3$ (that is, $r\geq2$) treated
here require a different route, through the extremal fragments and their
cluster structure.

The two works were obtained independently and are otherwise complementary in
scope. The authors in \cite{ChorbadzhiyskaMinchevSavov}  work within the beta-splitting
family and describe several functionals of a uniformly sampled leaf.  
Here the
dislocation measure is general, and the focus is on separation-time extremal
processes (Theorem~\ref{thm:intro-separation-point-process},
Corollary~\ref{thm:three-regimes}) and clade-count dynamics
(Theorem~\ref{thm:intro-exact-q-clade-process}).  Beyond the common height
marginal, our results show that the freezing transition in the order of
$D^*_{n,r}$ is accompanied by two simultaneous phase transitions: the limiting
separation-time extremal process passes from Poisson to decorated Poisson, and
the limiting clade-count process from a pure-death process to one with both
upward and downward jumps.
\end{remark}

\begin{remark}[DTCS height]\label{rem:dtcs-height}
The genealogical tree underlying $\CTCS(n)$ is the discrete-time critical
beta-splitting tree $\DTCS(n)$. Let $L_n^*$ denote the height of  $\DTCS(n)$.  
In a companion paper,  \cite {MaDTCSHeight}
determines its first-order asymptotic height:
\[
  \frac{L_n^*}{(\log n)^2}
  \xrightarrow[n\to\infty]{\P}
  C_{\mathrm{ht}}
  :=\min_{\theta>1}\frac{\theta}{2\{\psi(\theta)+\gamma\}}
  \approx 0.976,
\]
where $\psi=\Gamma'/\Gamma$ is the digamma function and $\gamma$ is
Euler's constant.
\end{remark}

\subsection{Homogeneous fragmentation trees: main results}\label{sec:main-results}
We now apply the fragmentation-tree viewpoint of
 Joseph~\cite{Joseph} to Bertoin's
continuous-time homogeneous fragmentation processes~\cite{Bertoin}, and
state our results in this setting.  This   naturally encompasses
$\CTCS(n)$ by Aldous and Janson's representation \cite{AldousJansonIII}.  Working in continuous time also removes the lattice
oscillations caused by an integer-valued graph distance, thereby allowing
genuine weak limits along the full sequence rather than only subsequential
limits. 

Denote the space of mass partitions by 
\[
  \mathcal S^\downarrow
  := \Bigl\{   
    \mathbf s=(s_i)_{i\geq1}:
    s_1\geq s_2\geq\cdots\geq0 \ ,
    \  \sum_{i\geq1}s_i\leq1\Bigr\}
\]
 Its subspace of conservative mass partitions
 is 
$  \mathcal S_1^\downarrow
  := \{
    \mathbf s\in\mathcal S^\downarrow:
    \sum_{i\geq1}s_i=1
 \}.$ 
Fix an erosion coefficient $c\geq 0$ and a dislocation measure $\nu$, which is a  
measure on $\mathcal S^\downarrow$ satisfying
\begin{equation}\label{eq:dislocation-integrability}
  \nu\bigl(\{(1,0,\ldots)\}\bigr)=0,
  \qquad
  \int_{\mathcal S^\downarrow}(1-s_1)\,\nu(\dif\mathbf s)<\infty. \tag{$A^{\mathrm{int}}_{\nu}$} 
\end{equation} 

Let $\Pi=(\Pi(t))_{t\geq0}$ be the homogeneous fragmentation process with
characteristics $(c,\nu)$; see Section \ref{sec:fragmentation} and 
\cite[Chapter 3]{Bertoin}.  
The finite restrictions $(\Pi(t)|_{[n]})_{t\geq0}$, $n\geq1$, induce a
sequence of chronological trees $(\mathcal T_n^{c,\nu})_{n\geq1}$, called
\emph{fragmentation trees} here.

We record the dynamics of $\mathcal{T}^{c,\nu}_n$ directly:  
At time $t=0$ there is a single clade $[n]$.    A clade $\mathsf C$ of
size $m\geq2$ carries an independent exponential clock of rate
$\kappa(m)=cm+\kappa_\nu(m)$.  When it rings, 
\begin{itemize}
\item with probability $cm/\kappa(m)$, an \emph{erosion event} occurs:
choose one label uniformly from $\mathsf C$ and split $\mathsf C$ into
this singleton and the sub-clade formed by the remaining labels;
\item with probability $\kappa_\nu(m)/\kappa(m)$, a \emph{dislocation event}
occurs:  First sample a mass partition
$\mathbf s=(s_i)_{i\geq1}$ according to the probability distribution
\begin{equation}\label{eq:finite-visible-dislocation-law}
 \widehat{\nu}_m(\dif\mathbf s)
 :=
 \frac{1-\sum_{i\geq1}s_i^m}{\kappa_\nu(m)}
 \,\nu(\dif\mathbf s).
\end{equation}    
Given $\mathbf s$, partition $\mathsf{C}$ according to Kingman's paintbox construction with frequencies $\mathbf{s}$ (see Figure \ref{fig:paintbox}),   
conditioned on obtaining at least two blocks. Each resulting block forms a sub-clade of $\mathsf{C}$.
\end{itemize}
In either case, the resulting sub-clades are the children of $\mathsf C$,
and every non-singleton child evolves independently according to the same
rule.

\begin{figure}[tb]
\centering
\begin{tikzpicture}[
  font=\footnotesize,
  lab/.style={inner sep=1.2pt},
  box/.style={fill=masscol!12,draw=masscol,line width=.5pt},
  dust/.style={pattern=north east lines,pattern color=dustcol,draw=dustcol,
               line width=.4pt,dash pattern=on 1.6pt off 1.4pt},
  ldot/.style={circle,fill=cladecol,inner sep=1.1pt},
  drop/.style={line width=.45pt,color=cladecol,-{Straight Barb[length=3pt,width=2.6pt]}},
  child/.style={rectangle,rounded corners=1.5pt,draw=cladecol,line width=.5pt,
                fill=cladecol!6,inner sep=2pt},
]

\begin{scope}[x=1cm,y=-1.0cm]
  \node[lab,anchor=east] at (-0.15,-0.85) {$\mathsf C$};
  \foreach \x/\n in {0.55/1,1.60/4,2.50/2,3.09/6,3.68/8,4.50/5,5.23/3,5.87/7}{
     \node[ldot] at (\x,-0.85) {};
     \node[lab,anchor=south,yshift=2pt] at (\x,-0.85) {$\n$};
     \draw[drop] (\x,-0.70) -- (\x,-0.12);}

  \filldraw[box] (0,0) rectangle (2.18,0.60);
  \filldraw[box] (2.18,0) rectangle (4.00,0.60);
  \filldraw[box] (4.00,0) rectangle (5.00,0.60);
  \filldraw[dust] (5.00,0) rectangle (6.10,0.60);
  \node[lab,text=masscol] at (1.09,0.30) {$s_1$};
  \node[lab,text=masscol] at (3.09,0.30) {$s_2$};
  \node[lab,text=masscol] at (4.50,0.30) {$s_3$};
  \node[lab,anchor=west,text=dustcol,xshift=3pt] at (6.10,0.30) {dust};
  \node[lab,anchor=north,yshift=-1pt] at (0,0.60) {$0$};
  \node[lab,anchor=north,yshift=-1pt] at (6.10,0.60) {$1$};

  \foreach \x/\s in {1.09/{\{1,4\}},3.09/{\{2,6,8\}},4.50/{\{5\}},5.23/{\{3\}},5.87/{\{7\}}}{
     \draw[drop] (\x,0.70) -- (\x,1.16);
     \node[child] at (\x,1.40) {$\s$};}
  \node[anchor=west,text width=6.4cm,align=left,text=black!62,inner sep=0pt] at (7.40,0.30)
       {Each label in $\mathsf{C}$ goes independently to box~$i$ with probability $s_i$, and to dust with
        probability $1-\sum_{i\geq1}s_i$. Labels in the same box form a block, while each dust label forms a singleton.};
\end{scope} 
\end{tikzpicture}
\caption{A partition generated by an $\mathbf{s}$-paintbox.}
\label{fig:paintbox}
\end{figure}

As in the construction of $\CTCS(n)$, the clades \footnote{
In partition terminology, clades correspond to \emph{blocks}.
We generally use \emph{clade} when discussing trees and the finite
restrictions $\Pi(t)|_{[n]}$, and \emph{block} when discussing
partitions of $\mathbb{N}$.
}
 appearing in
$(\Pi(t)|_{[n]})_{t\geq0}$ form a  inclusion tree and carry
chronological marks: a non-singleton clade $\mathsf C$ is represented by
$[b(\mathsf C),s(\mathsf C))$, whereas a singleton is represented by
$\{b(\mathsf C)\}$.  
 Every child $\mathsf C'$ is born when its parent
$\mathsf C$ splits, so $b(\mathsf C')=s(\mathsf C)$.  Unlike in
$\CTCS(n)$, a dislocation may create more than two children.  We denote
the resulting chronological  tree by $\mathcal T_n^{c,\nu}$.

The family $(\mathcal T_n^{c,\nu})_{n\geq1}$ is sampling consistent.
To obtain $\mathcal T_n^{c,\nu}$ from $\mathcal T_{n+1}^{c,\nu}$, remove
the label $n+1$ from every clade, discard empty clades, and identify
consecutive clades with identical resulting label sets. Replace the
corresponding consecutive half-open interval marks by their union.
If the restricted lineage
has become a singleton, retain only the point mark at its first restricted
birth time, rather than a positive-length interval mark.  This gives $\mathcal T_n^{c,\nu}$ because
$ \bigl(\Pi(t)|_{[n+1]}\bigr)|_{[n]}=\Pi(t)|_{[n]}$.

\medskip
\noindent\textbf{Separation times.} 
For a finite  set $A\subset\mathbb N$ with $|A|\geq2$, we define the
\emph{separation time} of $A$ by
\[
  T_A:=\inf\bigl\{t\geq0:
    A\text{ is not contained in any block of the partition }\Pi(t)
  \bigr\}.
\]
In the chronological tree $\mathcal T_n^{c,\nu}$ with $A\subset[n]$, $T_A=s(\mathsf C_A)$
is the splitting time of the MRCA clade $\mathsf C_A$ of $A$ in $\mathcal T_n^{c,\nu}$. 
By consistency under restriction, this definition of $T_A$ does not depend on $n \ge \max A$. 
The time $T_A$ is also called the \emph{coalescence level} of $A$
in \cite{LambertContour}.

For $\mathcal T_n^{c,\nu}$, we use the same $r$-stopped tree construction of $\mathcal T_n^{c,\nu}$   as in Section \ref{sec:ctcs-question}.  Thus, for $1\leq r<n$, the $r$-shattering time is given by
\[
  D^*_{n,r}=\max_{A\in\binom{[n]}{r+1}}T_A .  
\]  
Here by a slight abuse of notation, we write $\binom{I}{q}$ for the collection of all $q$-element subsets of $I$.
The \emph{ranked $(r+1)$-point separation times}  
 $(D_{n,r}^{(j)}: 1\leq j\leq\binom{n}{r+1})$ is the
decreasing rearrangement, counted with multiplicity, of
$\bigl(T_A:A\in\binom{[n]}{r+1}\bigr)$.

 \smallskip
\noindent\textbf{Generating function.} We introduce the generating functions 
\begin{equation}\label{eq:intro-kappa}
  \kappa_\nu(\theta):=\int_{\mathcal S^\downarrow}
  \Bigl(1-\sum_{i\geq1}s_i^\theta\Bigr)\nu(\dif\mathbf s) \ , 
  \quad 
  \kappa(\theta):=c\theta+\kappa_\nu(\theta),
  \qquad \theta  \ge 1.
\end{equation}
Using \eqref{eq:dislocation-integrability} and  $1-\sum_{i\geq1}s_i^\theta\leq \theta(1-s_1)$, we see $\kappa_\nu$ is well
defined. Indeed $\kappa_\nu$ is smooth on
$(1,\infty)$, by using the fact that for $m \ge 1$ and $p >1$, $ \sup_{x \in (0,1)} \frac{ x^p \log^{m}(1/x)}{ x \wedge (1-x)} <\infty $ and dominated convergence. 
  Moreover $  (-1)^{k+1} \kappa^{(k)}_\nu(\theta)=\int_{\mathcal S^\downarrow} \sum_{i\geq1}s_i^\theta\log^{k}(1/s_i) \nu(\dif\mathbf s) \in (0,\infty)$  for every integer $k\geq1$ and every $\theta>1$. \footnote{Throughout we assume $0 \log \frac{1}{0} = \lim_{s \downarrow 0} s \log \frac{1}{s} =0$}
 We therefore   define
\[ \Delta(\theta) := \theta \kappa'(\theta) - \kappa(\theta) =\theta \kappa_{\nu}'(\theta) - \kappa_{\nu}(\theta)  \quad \text{ for } \theta >1 . \] 

\smallskip
\noindent
\textbf{Standing assumptions on the dislocation measure $\nu$.} We assume that  
 \[ \text{ the function } \ \Delta  \ \text{ has a unique root } \  \theta_* \in (1,\infty).  \] 
 This is equivalent to the following  \emph{entropy-dominance  condition}:
\begin{equation}  
    \lim_{\theta \downarrow 1} \Delta(\theta) =   \int_{\mathcal{S}^\downarrow} \Bigl[ \sum_{i \ge 1}s_i \log (1/s_i) - \big(1-\sum_{i \ge 1} s_i\big) \Bigr] \nu(\dif \mathbf{s}) >0 . \tag{$A^{\mathrm{ed}}_{\nu}$}    \label{eq:entropy-dominance}
\end{equation}
To see this, note first that  $\Delta$ is strictly decreasing (see Figure \ref{fig:freezing} (a)), since  $\Delta'(\theta)=\theta \kappa_{\nu}''(\theta)<0$.  
For any   $\mathbf{s}\in\mathcal S^\downarrow
\setminus \{(1,0,\ldots)\}$, we have
$
 \theta\sum_{i\geq1}s_i^\theta\log(1/s_i)
   - (1-\sum_{i\geq1}s_i^\theta ) \to -1$  as $\theta\to\infty$.
Applying monotone convergence   gives
$
 \lim_{\theta\to\infty}\Delta(\theta)
 = 
  -\nu(\mathcal S^\downarrow) \in [-\infty,0)$. Thus $\lim_{\theta \downarrow 1} \Delta(\theta)>0$ is equivalent to the existence of unique root of $\Delta$  on $(1,\infty)$.

Whenever $\theta_*$ exists,    set 
\begin{equation}\label{eq:def-a*}
    \kappa_*:=\kappa(\theta_*) \ ,
    \quad \sigma_*^2:=-\kappa''(\theta_*) > 0 \ ,
    \quad   a_*=   \Bigl(  \frac{2}{\pi\, \theta_*^3} \frac{\kappa_* }{\sigma_*^2 } \Bigr)^{1/2}   .
\end{equation}
 
We emphasize that condition \eqref{eq:entropy-dominance}  is quite mild:
{ it is automatically satisfied by every nonzero conservative
dislocation measure.  Indeed, the dust term
$1-\sum_{i\geq1}s_i$ vanishes on $\mathcal S_1^\downarrow$, whereas the
entropy term $\sum_{i\geq1}s_i\log(1/s_i)$ is strictly positive away from
the excluded trivial partition.}
Moreover, \eqref{eq:entropy-dominance} implies the genuine-branching:
$\nu(\{\mathbf s:s_2>0\})>0$, because when $s_2=0$, its integrand equals
$s_1\log(1/s_1)-(1-s_1)\leq0$.
 
 We also impose the \emph{no-sudden-extinction condition}
\begin{equation}
\label{eq:no-sudden-extinction} 
  \nu(\{\mathbf0\})=0. \tag{$\mathrm A_\nu^{\mathrm{ne}}$}
\end{equation}

\noindent\textbf{Our results.} 
We introduce the regime-dependent spatial scale and centering
\begin{equation}\label{eq:intro-point-process-normalization} 
 \gamma_q:=
 \begin{cases}
  \kappa(q),&q\leq\theta_*,\\
  \kappa_*:=\kappa(\theta_*),&q>\theta_*,
 \end{cases}  
  \qquad
 m_{n,q}:=
 \begin{cases}
  q\log n,&q<\theta_*,\\
  q\log n-\dfrac12\log\log n,&q=\theta_*,\\
  \theta_*\log n-\dfrac32\log\log n,&q>\theta_*.
 \end{cases} 
\end{equation}
Observe that the coefficients in $m_{n,q}$ and $\gamma_q$ \emph{\textbf{freeze}} once
$q$ crosses the critical value $\theta_*$, i.e., they do not depend on $q$ anymore. 
Figure~\ref{fig:freezing} illustrates this for $\CTCS(n)$.  The corresponding first-order freezing phenomenon was established
by Joseph~\cite{Joseph}.

For a locally compact polish space $E$, let $\Mloc(E)$, the space of locally finite point measures
equipped with the vague topology (see Section \ref{eq:point-process} for details). A point process on $E$ is a random element of $\Mloc(E)$. 
The \emph{separation-time extremal process} is a point process on $(-\infty,\infty]$ defined as 
\begin{equation}\label{eq:intro-separation-point-process}
 \Xi_{n,q}
 :=\sum_{A\in\binom{[n]}q}\delta_{\gamma_qT_A-m_{n,q}} \quad \text{ for each integer }   q \ge 2. 
\end{equation}
In what follows, $\PPP(\mu)$ denotes a Poisson point process with intensity
measure $\mu$, and a decorated Poisson point process $\DPPP(\mu,\mathcal D)$ is obtained by
placing an independent copy of the point process $\mathcal D$ at each atom of $\PPP(\mu)$;
see \eqref{eq-def-DPPP} for the precise definition.

 For a random element $\xi$ taking values in a Polish space
and a sub-$\sigma$-field $\mathcal G$, we write
$\mathcal L(\xi\mid\mathcal G)$ or $\mathcal{L}_{\xi \mid \mathcal{G}}$ for (a version of) the regular
conditional distribution of $\xi$ given $\mathcal G$. 
 
\begin{theorem}[Separation-time extremal process]
\label{thm:intro-separation-point-process}
Let $c\geq0$ be an erosion coefficient and let $\nu$ be a dislocation
measure on $\mathcal S^\downarrow$ satisfying
\eqref{eq:dislocation-integrability}, \eqref{eq:entropy-dominance}, and
\eqref{eq:no-sudden-extinction}.  
Let $(W_{\theta})_{1<\theta<\theta_*}$ and $ Z$ be almost surely positive the martingale limits,  defined in
Lemma~\ref{lem:brw-inputs} (\ref{prop:martingale}, \ref{input:derivative}). 
Fix an integer
$q\geq2$. When
$q>\theta_*$, assume in addition the nonlattice condition
\begin{equation}
  \text{ for every } d>0 \ , \quad 
  \int_{\mathcal S^\downarrow} 
  \sum_{i:s_i>0}s_i
  \ind { \log (1/s_i)\notin d\, \mathbb Z }
  \,\nu(\dif \mathbf s)>0. \tag{$\mathrm{A}^{\mathrm{nl}}_{\nu}$} \label{non-lattice-cond}
\end{equation} 

 There is a  point process
$\Xi_q$ on $(-\infty,\infty]$ with $\Xi_q(\{\infty\})=0$,   such that
\begin{equation}\label{eq:intro-point-process-limit}
 \Xi_{n,q}
 \xrightarrow[n\to\infty]{\mathrm{law}}
 \Xi_q
 \quad\text{in }\Mloc(({-}\infty,+\infty]).
\end{equation}
The law of the limits undergoes the following phase transition. 
\begin{enumerate}[(i)]
\item\label{case:additive} If $q<\theta_*$,   $\Xi_q $ can be coupled with $ W_q$ such that 
\[ \mathcal{L} (  \Xi_q \mid W_q)   = 
 \PPP \bigl( \frac{1}{q!} W_q  e^{-x}\,\dif x \bigr)  . \quad  \]

\item\label{case:critical} If $q=\theta_*$,   then  $\Xi_q $ can be coupled with $Z$ such that  
\[ \mathcal{L} (  \Xi_q \mid Z )  =  \PPP \bigl(  \frac{a_*}{q!} Z  e^{-x}\,\dif x  \bigr) . 
\]
\item\label{case:frozen} If $q>\theta_*$, then there exists a point process $\mathcal D_q^{\star}$ on
$(-\infty,0]$ whose rightmost atom is at zero almost surely,  and a constant
$C_q^\star\in(0,\infty)$, and a coupling of $\Xi_q $ and $Z$ such that 
\[  \mathcal{L} (  \Xi_q \mid Z)   
= \DPPP \bigl(C_q^\star Z e^{-x}\,\dif x, \mathcal D_q^{\star}\bigr) .  \]
 Moreover the law of   $\mathcal{D}^{\star}_q$  has an implicit description given by \eqref{eq:def-Xi-q-a}. 
\end{enumerate} 
\end{theorem}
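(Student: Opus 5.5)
The plan is to pass from the finite trees to the fragmentation process of $\mathbb N$ via the paintbox. Fix a large time $t_0$ (eventually $t_0\to\infty$ after $n\to\infty$), and condition on $\mathcal F_{t_0}$. The blocks of $\Pi(t_0)$ have asymptotic frequencies $(|B_i(t_0)|)_i$. Conditionally on $\mathcal F_{t_0}$, the labels of $[n]$ are allocated to these blocks by an exchangeable paintbox, and each block thereafter evolves as an independent copy of $\Pi$, time-shifted by $t_0$ and started from mass $|B_i(t_0)|$. A $q$-set $A$ with $\gamma_qT_A-m_{n,q}\ge x$ must satisfy $T_A>t_0$ for large $n$, so $A$ lies inside a single block $B_i(t_0)$. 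Hence $\Xi_{n,q}$ restricted to $[x,\infty]$ is, up to negligible error, a superposition over $i$ of independent point processes, namely the extremal processes of the subtrees rooted at the blocks.

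The first step is a one-block computation. For a block of mass $u$ containing $N_i\approx nu$ labels, I would compute the tail $\P(\text{some }A\subset B,\ |A|=q,\ \gamma_qT_A>m_{n,q}+x)$ via the many-to-one formula for the tagged $q$-tuple. For a fixed $q$-tuple, $\P(T_A>t)=e^{-\kappa(q)t}$. The intensity of surviving fragments of size $s$ at time $t$ is governed by $\E\sum_i|B_i(t)|^{\theta}=e^{-(\kappa(\theta)-\kappa(1))t}$-type identities, with the tilt at $\theta$ giving a random walk of $\log(1/|B|)$ with drift $\kappa'(\theta)$.

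In the regime $q<\theta_*$, the expected number $\binom nq e^{-\kappa(q)t}$ at $t=(q\log n+x)/\kappa(q)$ is $e^{-x}/q!$. A second-moment/Chen--Stein argument conditional on $\mathcal F_{t_0}$ gives a Poisson limit with intensity $\frac{1}{q!}\sum_i|B_i(t_0)|^qe^{\kappa(q)t_0}e^{-x}$, which is the additive martingale and converges to $W_q$. The additive martingale is uniformly integrable because $q<\theta_*$ (Lemma~\ref{lem:brw-inputs}). Distinct $q$-sets overlap only at lower order: pairs sharing labels but not the MRCA contribute $o(1)$, since the first moment concentrates on many small blocks each carrying roughly one tuple.

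At $q=\theta_*$, the additive martingale degenerates. I would instead use the Seneta--Heyde normalization $\sqrt{t}\,W_{\theta_*}(t)\to cZ$, which yields both the $-\tfrac12\log\log n$ correction and the constant $a_*$ through the local CLT for the tilted walk, conditioned to stay below the barrier (the ballot-type factor $\sqrt{2/\pi}$ and $\sigma_*$). For this I would truncate at the barrier $\log(1/|B|)\ge \kappa'(\theta_*)t-\text{const}$ and compute the first and second moments of the truncated count.

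For $q>\theta_*$ the extremes come from the few exceptionally large blocks at time $t\approx(\theta_*\log n)/\kappa_*$, that is, blocks whose size $nu$ is $O(1)$-ish large. I would follow the Aïdékon/Madaule scheme for the extremal process of branching random walk. The positions $\log(n|B|)$ of fragments, shifted by $\tfrac32\log\log n$ in the $\theta_*$-tilt, converge to a decorated Poisson process with intensity $C Ze^{-\theta_* y}$; this needs the nonlattice condition \eqref{non-lattice-cond}. Given a block of size $nu$ with $u$ large in this window, the number of its labels is Poisson-like with mean $nu$ and finite. The future separation times of its $q$-subsets form a finite point process $\mathcal D$, which becomes the decoration after shifting by its maximum. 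Integrating the Poisson intensity against the law of $\max\gamma_qT_A$ for a finite random clade gives $C_q^\star$. The decoration then has rightmost atom $0$ and is described implicitly by the conditional law of the whole subtree given its top value, as in \eqref{eq:def-Xi-q-a}.

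The main obstacle is this frozen case. It requires a convergence of the fragment extremal process jointly with the labels present in each fragment, in a form uniform enough to exchange the $t_0\to\infty$ and $n\to\infty$ limits. It also requires controlling the contribution of medium-sized blocks, which needs a tightness bound via a truncated first moment with the barrier. Finally, convergence in $\Mloc((-\infty,\infty])$ would be checked through Laplace functionals of compactly supported test functions on $[x,\infty]$, together with the tightness $\Xi_{n,q}([x,\infty])<\infty$, which comes from the first-moment bounds.
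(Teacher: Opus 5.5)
There is a genuine gap in your treatment of $q\le\theta_*$, at the Chen--Stein step. Conditioning only on $\mathcal F_{t_0}$ (the masses at a fixed time $t_0$) does not decouple disjoint $q$-sets. Two disjoint $q$-sets that land in the same block $B_i(t_0)$ are both driven by that block's future fragmentation, so each must lie in the other's dependency neighbourhood. The resulting Chen--Stein term is of order $\sum_i (n|B_i(t_0)|)^{2q}\,\E[S_q(t-t_0)^2]\asymp e^{-2x}\,e^{2\kappa(q)t_0}S_{2q}(t_0)\,\E[W_q(t-t_0)^2]$. Only $L^p$ bounds with some $p>1$ are available for the additive martingale, and in general $\E[W_q(s)^2]\ge e^{(2\kappa(q)-\kappa(2q))s}\to\infty$; this already happens for $\CTCS$ with $q=2$, where $\kappa(4)=\tfrac{11}{6}<2=2\kappa(2)$.

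In fact, for fixed $t_0$ the claim itself is false. As $n\to\infty$, the count given $\mathcal F_{t_0}$ is asymptotically mixed Poisson with mean $\frac{e^{-x}}{q!}W_q$, which is not $\mathcal F_{t_0}$-measurable, rather than Poisson with mean $\frac{e^{-x}}{q!}W_q(t_0)$. Your remark about pairs sharing labels addresses the wrong source of dependence.

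The missing idea is to condition on the \emph{entire} mass process, i.e.\ on the interval fragmentation $\mathcal I$ of Lemma~\ref{lem:genealogical-paintbox}. Given $\mathcal I$, the labels are still i.i.d.\ uniform points, so disjoint $q$-sets are conditionally independent. The conditional mean is then the $\mathcal I$-measurable quantity $\binom nq S_q(t_{n,q}(x))$. It converges in probability to $\frac{\mathcal W_q}{q!}e^{-x}$ directly from martingale convergence, or from Seneta--Heyde at $q=\theta_*$; no barrier truncation or truncated second moments are needed. Only overlapping pairs remain, and $S_{q+\ell}\le S_qX_1^\ell$ bounds them by $n^qS_q\sum_{\ell=1}^{q-1}(nX_1)^\ell$, with $nX_1(t_{n,q}(x))\to0$. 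Kallenberg's criterion applied conditionally on $\mathcal I$ then gives the Cox limit, with no $t_0$ and no exchange of limits.

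In the frozen case your ingredients are the right ones: Madaule's extremal process at mass scale $1/n$, Poisson occupancies, and independent futures. However, the cluster structure is misidentified. The rescaled masses $\mathcal E^{\mathrm{fr}}_a$ are themselves a decorated Poisson process, with leaders $y_j$ and clusters $y_je^{d_{j,k}/\theta_*}$. So the independent, Poisson-located units are whole BRW clusters of fragments, not single blocks. The decoration $\mathcal D_q^\star$ must superpose the separation processes of all fragments in one cluster. This is why $C_q^\star$ involves $\E\prod_k\pi_q(ye^{d_k/\theta_*})$ rather than a single-block integral.

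Three further steps are needed there:
\begin{itemize}
\item Occupancies of distinct fragments are independent only after Poissonizing the sample size, which must then be removed.
\item The resulting Laplace functional $\exp\{-\langle\phi^{(a)}_{q,f},\mathcal E^{\mathrm{fr}}_{n,a}\rangle\}$ has a test function of order $\lambda^q$ near $0$. Its convergence comes from the weighted convergence with $\beta=q>\theta_*$ (Madaule's tightness of $\langle\exp_\beta,\cdot\rangle$), not from a barrier argument.
\item Describing the decoration as ``the conditional law of the cluster given its top value'' still requires showing that this law does not depend on the top value. The paper gets this from the scaling $K_q(f_s)=e^sK_q(f)$. That scaling makes $\vartheta_{-\log(C_q^\star Z)}\Xi_q$ independent of $Z$ and exp-$1$-stable, and Lemma~\ref{lem:maillard-exp-stable} then gives the representation.
\end{itemize}
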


The next sub-section, as well as  Figure~\ref{fig:decoration}, explains the mechanism behind this transition. Remark~\ref{rem:lattice-case} discusses
assumption~\eqref{non-lattice-cond}.

\begin{corollary}[Shattering times and separation-time gaps]
\label{thm:three-regimes}
Under the assumptions of
Theorem~\ref{thm:intro-separation-point-process}, fix $r\geq1$ and set
$q:=r+1$.  
Let
$ \mathsf A_q^{(1)}\geq\mathsf A_q^{(2)}\geq\cdots $
be the atoms of $\Xi_q$, listed with multiplicities, and set
$\mathsf{\Delta}_{q,j}:=\mathsf A_q^{(j)}-\mathsf A_q^{(j+1)}$. Then, for every
fixed $k\geq1$,
\[
 \Bigl(  
   \gamma_qD_{n,r}^{*}-m_{n,q},
   \bigl(\gamma_q(D_{n,r}^{(j)}-D_{n,r}^{(j+1)})\bigr)_{j=1}^{k}
 \Bigr) 
 \xrightarrow[n\to\infty]{\mathrm{law}}
 \Bigl( 
   \mathsf A_q^{(1)},( \mathsf{\Delta}_{q,j} )_{j=1}^{k} \Bigr).
\] 
Moreover, the limits satisfy the following properties:
\begin{itemize}
  \item Let $G$ be a standard Gumbel random variable, independent of
  $W_q$ in the first case and of $Z$ in the other two.  Then  
\[
 \mathsf A_q^{(1)}
 \overset{\mathrm{law}}{=}
 \begin{cases}
  G+\log(W_q/q!),&q<\theta_*,\\[2mm]
  G+\log(a_*Z/q!),&q=\theta_*,\\[2mm]
  G+\log(C_q^\star Z),&q>\theta_*.
 \end{cases}
\] 
\item 
If $q\leq\theta_*$, 
$(\mathsf{\Delta}_{q,j})_{j\geq1}$ are mutually independent, with
$\mathsf{\Delta}_{q,j}\sim\operatorname{Exp}(j)$. In particular,
\[ \P( \mathsf{\Delta}_{q,j} = 0 )= 0 \quad \text{ for every } \,  j \ge 1 . \]  
While if $q>\theta_*$, they are
generally dependent and satisfy
\[
  \P(\mathsf{\Delta}_{q,j}=0)>0  \quad \text{ for every } \,  j \ge 1 .
\]
\end{itemize} 
\end{corollary}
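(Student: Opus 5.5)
The corollary follows from Theorem~\ref{thm:intro-separation-point-process} by a continuous-mapping argument on $\Mloc((-\infty,\infty])$; the limit properties are then read off from the explicit form of $\Xi_q$.

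\textbf{Step 1 (convergence of top atoms).} The atoms of $\Xi_{n,q}$, ordered decreasingly, are exactly $\gamma_q D^{(j)}_{n,r}-m_{n,q}$. So the left-hand vector is a fixed function of $\Xi_{n,q}$: namely $(\mathsf a^{(1)},(\mathsf a^{(j)}-\mathsf a^{(j+1)})_{j\le k})$ evaluated at the top $k+1$ atoms.

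Since $(-\infty,\infty]$ is compactified at $+\infty$, every set $[x,\infty]$ is relatively compact. Hence, on the set of measures $\mu$ with
\[
\mu(\{\infty\})=0,\qquad \mu((-\infty,\infty])\ge k+1,
\]
the map sending $\mu$ to its top $k+1$ atoms is continuous in the vague topology. This uses that a vague limit with no atom at a point $x$ gives $\mu_n([x,\infty])\to\mu([x,\infty])$. I would check that $\Xi_q$ lies in this set almost surely:
\begin{itemize}
\item $\Xi_q(\{\infty\})=0$ is given by the theorem.
\item $\Xi_q$ has infinitely many atoms a.s., because the intensity $e^{-x}\dif x$ has infinite mass at $-\infty$ and $W_q,Z>0$ a.s.
\end{itemize}
One also needs that no mass escapes to $-\infty$ from the top. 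This holds because the count of points in $[x,\infty]$ converges in law for each continuity point $x$, and the limit count tends to $\infty$ as $x\to-\infty$. The continuous mapping theorem then gives the displayed joint convergence.

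\textbf{Step 2 (law of the maximum).} Conditionally on $W_q$ (respectively $Z$), $\P(\mathsf A^{(1)}_q\le x)=\exp(-\Lambda e^{-x})$, where $\Lambda$ is the total mass coefficient:
\[
\Lambda=\frac{W_q}{q!},\qquad \Lambda=\frac{a_*Z}{q!},\qquad \Lambda=C^\star_q Z
\]
in the three cases. In the frozen case this uses that $\mathcal D_q^\star$ has its rightmost atom at $0$, so the maximum of the DPPP equals the maximum of the underlying PPP. A conditional Gumbel law $\exp(-\Lambda e^{-x})$ is exactly the law of $G+\log\Lambda$ with $G$ a standard Gumbel independent of $\Lambda$. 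This gives all three lines.

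\textbf{Step 3 (gaps).} For $q\le\theta_*$, conditionally on $\Lambda$, map $\Xi_q$ by $x\mapsto \Lambda e^{-x}$. This yields a homogeneous rate-one PPP $0<\Gamma_1<\Gamma_2<\cdots$ on $(0,\infty)$, and $\mathsf A^{(j)}_q=\log\Lambda-\log\Gamma_j$, so
\[
\mathsf\Delta_{q,j}=\log(\Gamma_{j+1}/\Gamma_j).
\]
By the Rényi representation, $(\Gamma_j/\Gamma_{j+1})_{j\ge1}$ are independent with $\Gamma_j/\Gamma_{j+1}\sim\mathrm{Beta}(j,1)$. Hence $-\log(\Gamma_j/\Gamma_{j+1})\sim\operatorname{Exp}(j)$, independently over $j$. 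This conditional law does not depend on $\Lambda$, so it is also the unconditional law, and in particular $\P(\mathsf\Delta_{q,j}=0)=0$.

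\textbf{Step 4 (frozen case, the main obstacle).} For $q>\theta_*$ we need $\P(\mathsf\Delta_{q,j}=0)>0$ for every $j$. The natural route is to show that $\mathcal D_q^\star$ has a multiple atom at $0$ with positive probability, i.e.\ $\P(\mathcal D^\star_q(\{0\})\ge 2)>0$.

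Heuristically this comes from the MRCA-clade multiplicity seen in Figure~\ref{fig:ctcs-tree}. A final clade of size $q+1$ that splits off a single label gives $q+1$ distinct $q$-subsets sharing one separation time. More generally, a clade of size $m>q$ gives $\binom{m}{q}$ subsets whose separation time is its splitting time, with positive probability. Using the implicit description \eqref{eq:def-Xi-q-a}, I would exhibit such a positive-probability configuration in the limiting cluster; genuine branching, guaranteed by \eqref{eq:entropy-dominance}, provides the needed splits.

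Granting this, let $p:=\P(\mathcal D_q^\star(\{0\})\ge j+1)>0$. Such $p$ exists because in the same configuration one may take the clade size $m$ large enough that $\binom{m}{q}\ge j+1$. The top cluster of the DPPP is an independent copy of $\mathcal D_q^\star$, so with probability at least $p$ it contributes at least $j+1$ coincident atoms at the maximum, forcing $\mathsf\Delta_{q,j}=0$.

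The dependence of the gaps follows from the shared decorations: a gap of exactly $0$ in one position correlates with others. This is stated as "generally" and needs no proof.
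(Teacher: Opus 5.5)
Your Steps 1--3 are sound and follow the paper's proof: continuous mapping for the top $k+1$ atoms, the conditional Gumbel law from the void probability, and $\Gamma_j/\Gamma_{j+1}\sim\mathrm{Beta}(j,1)$.

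The gap is in Step 4. Your claim that $\P(\mathcal D_q^\star(\{0\})\ge j+1)>0$ for every $j$ is false in general, so coincidences at the maximum cannot give $\P(\mathsf\Delta_{q,j}=0)>0$ for all $j$. The multiplicity of the top atom of a cluster is the multiplicity of its \emph{last} separation time. At that time a single clade of size $m\ge q$ splits, since distinct clocks, and distinct independent fragments, never act simultaneously. All its children must have size at most $q-1$, otherwise some $q$-set would separate later, and the multiplicity is then $\binom mq$.

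For bounded-arity dislocations this caps $m$. In $\CTCS$ we have $\theta_*\approx2.41$, so $q=3$ is frozen, and every split has exactly two children, so $m\le4$. Hence the largest atom of $\Xi_{n,3}$, and via the construction \eqref{eq:def-Xi-q-a} that of $\Xi_3$, has multiplicity at most $\binom43=4$. The top atom of a DPPP is carried only by the decoration of the top Poisson point, which has the law of $\mathcal D_3^\star$. Therefore $\mathcal D_3^\star(\{0\})\le4$ almost surely, and your argument cannot reach $j\ge4$: choosing $m$ large with $\binom mq\ge j+1$ is simply not available.

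Your heuristic example is also off. A $(q+1)$-clade splitting off one label separates only $q$ of its $q$-subsets, and the leftover size-$q$ clade separates later, so this coincidence does not sit at the top of a cluster.

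The missing idea is to place the multiple atom \emph{below} $j-1$ simple atoms, using several clusters. Given $Z$, the paper writes the restriction of $\Xi_q$ to $(a,\infty]$ as $\sum_i\mu_i$ with $\sum_i\delta_{\mu_i}\sim\PPP(Z\Lambda_q^{(a)})$. Here $\Lambda_q^{(a)}$ is the cluster intensity \eqref{eq:frozen-cluster-intensity}, of total mass $C_q^\star e^{-a}$. The paper then considers the event that exactly $j-1$ clusters are single atoms in an interval $J_{\mathrm R}$, exactly one cluster is a $(q+1)$-fold atom in a lower interval $J_{\mathrm L}$, and no other cluster charges $(a,\infty]$. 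On this event $\mathsf A_q^{(j)}=\mathsf A_q^{(j+1)}$. Its probability is positive once $\Lambda_q^{(a)}(\mathsf{Pur}_1(J))>0$ and $\Lambda_q^{(a)}(\mathsf{Pur}_{q+1}(J))>0$.

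The second positivity requires $[q+1]$ to shatter in one step into blocks of size at most $q-1$. This happens at rate $\rho_q=(q+1)\kappa(q)-q\kappa(q+1)$, which is positive because $\kappa(\theta)/\theta$ is strictly decreasing for $\theta>\theta_*$. Genuine branching alone does not give it: for $\CTCS$, $\rho_2=3h_1-2h_2=0$. Working with $\Lambda_q^{(a)}$ also settles a point your sketch leaves open, namely how the abstract decoration $\mathcal D_q^\star$ of Lemma~\ref{lem:maillard-exp-stable} relates to the explicit construction.
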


\begin{figure}[!htbp]
\centering
\begin{tikzpicture}[font=\footnotesize]
\pgfplotsset{
  every axis/.append style={
    width=6.9cm, height=4.5cm,
    line width=.5pt, tick align=outside,
    tick style={line width=.4pt,black!70},
    label style={font=\footnotesize}, tick label style={font=\footnotesize},
    clip mode=individual,
  },
  curve/.style={line width=.9pt,color=masscol,mark=none},
  guide/.style={line width=.4pt,color=black!55,dash pattern=on 1.6pt off 1.4pt,mark=none},
  chord/.style={line width=.6pt,color=black!45,dash pattern=on 2.6pt off 1.8pt,mark=none},
  frozen/.style={line width=1pt,color=cladecol,mark=none},
  prefreeze/.style={line width=1pt,color=hicol,mark=none},
}

\begin{axis}[
  name=A,
  axis x line=middle, axis y line=none,
  xmin=0.92, xmax=4.18, ymin=-1.12, ymax=1.92,
  xtick={1,2,3,4},
  xlabel={$\theta$}, x label style={at={(axis description cs:1,0.37)},anchor=west},
  ]
  \addplot[curve] coordinates {(1.0000,1.6449) (1.0500,1.5296) (1.1000,1.4232) (1.1500,1.3245) (1.2000,1.2327) (1.2500,1.1469) (1.3000,1.0665) (1.3500,0.9909) (1.4000,0.9197) (1.4500,0.8523) (1.5000,0.7885) (1.5500,0.7279) (1.6000,0.6702) (1.6500,0.6152) (1.7000,0.5627) (1.7500,0.5125) (1.8000,0.4643) (1.8500,0.4181) (1.9000,0.3737) (1.9500,0.3310) (2.0000,0.2899) (2.0500,0.2502) (2.1000,0.2118) (2.1500,0.1748) (2.2000,0.1389) (2.2500,0.1042) (2.3000,0.0706) (2.3500,0.0379) (2.4000,0.0062) (2.4500,-0.0245) (2.5000,-0.0545) (2.5500,-0.0836) (2.6000,-0.1120) (2.6500,-0.1396) (2.7000,-0.1665) (2.7500,-0.1928) (2.8000,-0.2184) (2.8500,-0.2435) (2.9000,-0.2679) (2.9500,-0.2918) (3.0000,-0.3152) (3.0500,-0.3381) (3.1000,-0.3604) (3.1500,-0.3824) (3.2000,-0.4038) (3.2500,-0.4249) (3.3000,-0.4455) (3.3500,-0.4657) (3.4000,-0.4855) (3.4500,-0.5050) (3.5000,-0.5241) (3.5500,-0.5429) (3.6000,-0.5613) (3.6500,-0.5794) (3.7000,-0.5972) (3.7500,-0.6147) (3.8000,-0.6319) (3.8500,-0.6489) (3.9000,-0.6655) (3.9500,-0.6819) (4.0000,-0.6980)};
  \addplot[only marks,mark=*,mark size=1.4pt,color=cladecol] coordinates {(2.4100,0)};
  \addplot[only marks,mark=o,mark size=1.4pt,line width=.5pt,color=masscol]
     coordinates {(2,0.2899) (3,-0.3152)};
  \node[anchor=north west,text=masscol,inner sep=1.5pt] at (axis cs:1.30,1.88)
       {$\Delta(\theta)=\theta\kappa'(\theta)-\kappa(\theta)$};
  \draw[line width=.35pt,color=black!55] (axis cs:2.31,0.84) -- (axis cs:2.04,0.36);
  \node[anchor=south west,inner sep=1pt] at (axis cs:2.28,0.84) {$\Delta(2)>0$};
  \draw[line width=.35pt,color=black!55] (axis cs:2.42,-0.58) -- (axis cs:2.93,-0.34);
  \node[anchor=north west,inner sep=1pt] at (axis cs:1.36,-0.50) {$\Delta(3)<0$};
  \node[anchor=north,text=cladecol,inner sep=1.5pt] at (axis cs:2.4100,-0.04) {$\theta_*$};
\end{axis}

\begin{axis}[
  name=B, at={($(A.right of south east)+(0.75cm,0)$)}, anchor=left of south west,
  axis x line=bottom, axis y line=none,
  xmin=1.42, xmax=5.4, ymin=1.856, ymax=2.53,
  xtick={2,3,4,5},
  xlabel={$q$}, x label style={at={(axis description cs:1,0)},anchor=west},
  ]
  \addplot[fill=cladecol!7,draw=none] coordinates
     {(2.4100,1.856) (5.4,1.856) (5.4,2.53) (2.4100,2.53)} \closedcycle;
  \addplot[chord] coordinates {(1.5000,2.4442) (1.5500,2.3505) (1.6000,2.2751) (1.6500,2.2138) (1.7000,2.1635) (1.7500,2.1220) (1.8000,2.0877) (1.8500,2.0592) (1.9000,2.0356) (1.9500,2.0161) (2.0000,2.0000) (2.0500,1.9869) (2.1000,1.9764) (2.1500,1.9680) (2.2000,1.9616) (2.2500,1.9569) (2.3000,1.9537) (2.3500,1.9518) (2.4000,1.9510) (2.4500,1.9513) (2.5000,1.9526) (2.5500,1.9546) (2.6000,1.9574) (2.6500,1.9609) (2.7000,1.9651) (2.7500,1.9697) (2.8000,1.9749) (2.8500,1.9806) (2.9000,1.9867) (2.9500,1.9932) (3.0000,2.0000) (3.0500,2.0072) (3.1000,2.0146) (3.1500,2.0224) (3.2000,2.0304) (3.2500,2.0386) (3.3000,2.0471) (3.3500,2.0558) (3.4000,2.0646) (3.4500,2.0737) (3.5000,2.0829) (3.5500,2.0922) (3.6000,2.1017) (3.6500,2.1114) (3.7000,2.1211) (3.7500,2.1310) (3.8000,2.1410) (3.8500,2.1510) (3.9000,2.1612) (3.9500,2.1715) (4.0000,2.1818) (4.0500,2.1922) (4.1000,2.2027) (4.1500,2.2133) (4.2000,2.2239) (4.2500,2.2346) (4.3000,2.2454) (4.3500,2.2561) (4.4000,2.2670) (4.4500,2.2779) (4.5000,2.2888) (4.5500,2.2998) (4.6000,2.3108) (4.6500,2.3218) (4.7000,2.3329) (4.7500,2.3440) (4.8000,2.3552) (4.8500,2.3663) (4.9000,2.3775) (4.9500,2.3888) (5.0000,2.4000)};
  \addplot[prefreeze] coordinates {(1.5000,2.4442) (1.5500,2.3505) (1.6000,2.2751) (1.6500,2.2138) (1.7000,2.1635) (1.7500,2.1220) (1.8000,2.0877) (1.8500,2.0592) (1.9000,2.0356) (1.9500,2.0161) (2.0000,2.0000) (2.0500,1.9869) (2.1000,1.9764) (2.1500,1.9680) (2.2000,1.9616) (2.2500,1.9569) (2.3000,1.9537) (2.3500,1.9518) (2.4000,1.9510) (2.4100,1.9510)};
  \addplot[frozen] coordinates {(2.4100,1.9510) (5.4,1.9510)};
  \addplot[guide] coordinates {(2.4100,1.856) (2.4100,2.53)};
  \addplot[only marks,mark=o,mark size=1.4pt,line width=.5pt,color=black!45]
     coordinates {(3,2.0) (4,2.1818) (5,2.4)};
  \draw[-{Straight Barb[length=3pt,width=2.6pt]},line width=.45pt,color=black!45]
        (axis cs:3,1.99) -- (axis cs:3,1.9645);
  \draw[-{Straight Barb[length=3pt,width=2.6pt]},line width=.45pt,color=black!45]
        (axis cs:4,2.172) -- (axis cs:4,1.9645);
  \draw[-{Straight Barb[length=3pt,width=2.6pt]},line width=.45pt,color=black!45]
        (axis cs:5,2.390) -- (axis cs:5,1.9645);
  \addplot[only marks,mark=*,mark size=1.5pt,color=hicol] coordinates {(2,2.0)};
  \addplot[only marks,mark=*,mark size=1.5pt,color=cladecol]
     coordinates {(3,1.9510) (4,1.9510) (5,1.9510)};
  \node[anchor=west,inner sep=1.5pt] at (axis cs:1.70,2.31) {$c_1(q)$};
  \node[anchor=south east,text=black!55,inner sep=1.5pt] at (axis cs:3.95,2.28) {$q/\kappa(q)$};
  \node[anchor=south,text=cladecol,inner sep=2.5pt] at (axis cs:4.62,1.9510) {$\theta_*/\kappa_*$};
  \node[anchor=north,text=cladecol,inner sep=1pt] at (axis cs:2.4100,1.854) {$\theta_*$};
  \node[anchor=south west,text=hicol,inner sep=1.5pt] at (axis cs:1.62,1.876) {pre-freezing};
  \node[anchor=south east,text=cladecol,inner sep=1.5pt] at (axis cs:5.36,1.876) {frozen};
\end{axis}

\begin{axis}[
  name=C, at={($(A.south west)+(0,-1.75cm)$)}, anchor=north west,
  axis lines=left,
  xmin=1.42, xmax=5.4, ymin=-1.46, ymax=0.42,
  xtick={2,3,4,5}, ytick={-1.2143,-0.4048,0},
  yticklabels={$-\tfrac{3}{2\kappa_*}$,$-\tfrac{1}{2\kappa_*}$,$0$},
  xlabel={$q$}, x label style={at={(axis description cs:1,0)},anchor=west},
  ]
  \addplot[fill=cladecol!7,draw=none] coordinates
     {(2.4100,-1.46) (5.4,-1.46) (5.4,0.42) (2.4100,0.42)} \closedcycle;
  \addplot[frozen] coordinates {(2.4100,-1.2143) (5.4,-1.2143)};
  \addplot[guide] coordinates {(2.4100,-1.46) (2.4100,0.02)};
  \addplot[only marks,mark=o,mark size=1.5pt,line width=.6pt,color=cladecol]
     coordinates {(2.4100,-1.2143)};
  \addplot[only marks,mark=*,mark size=1.5pt,color=hicol] coordinates {(2,0)};
  \addplot[only marks,mark=*,mark size=1.5pt,color=black] coordinates {(2.4100,-0.4048)};
  \addplot[only marks,mark=*,mark size=1.5pt,color=cladecol]
     coordinates {(3,-1.2143) (4,-1.2143) (5,-1.2143)};
  \node[anchor=west,inner sep=1.5pt] at (axis cs:1.54,0.365) {$c_2(q)$};
  \node[anchor=south west,text=hicol,inner sep=1.5pt] at (axis cs:1.46,0.03) {no correction};
  \node[anchor=south west,text=cladecol,inner sep=1.5pt] at (axis cs:2.95,-1.19)
       {Bramson correction, frozen};
  \node[anchor=west,inner sep=2.5pt,align=left] at (axis cs:2.47,-0.4048)
       {$q=\theta_*$\\(does not occur)};
  \node[anchor=north east,text=cladecol,inner sep=1pt] at (axis cs:2.36,-1.27) {$\theta_*$};
\end{axis}

\begin{axis}[
  name=D, at={($(B.south west)+(0,-1.75cm)$)}, anchor=north west,
  axis x line=bottom, axis y line=none,
  xmin=-3.2, xmax=8.7, ymin=0, ymax=0.72,
  xtick={\empty},
  xlabel={$x$}, x label style={at={(axis description cs:1,0)},anchor=west},
  ]
  \addplot[domain=-3.2:8.7,samples=130,line width=.8pt,color=hicol,
           dash pattern=on 2.4pt off 1.6pt]
     {exp(-(x-5.5))*exp(-exp(-(x-5.5)))};
  \addplot[frozen] coordinates {(-3.200,0.0001) (-3.070,0.0002) (-2.940,0.0003) (-2.810,0.0006) (-2.680,0.0010) (-2.550,0.0018) (-2.420,0.0028) (-2.290,0.0044) (-2.160,0.0067) (-2.030,0.0097) (-1.900,0.0138) (-1.770,0.0189) (-1.640,0.0254) (-1.510,0.0332) (-1.380,0.0424) (-1.250,0.0530) (-1.120,0.0650) (-0.990,0.0781) (-0.860,0.0922) (-0.730,0.1070) (-0.600,0.1223) (-0.470,0.1377) (-0.340,0.1530) (-0.210,0.1677) (-0.080,0.1816) (0.050,0.1945) (0.180,0.2060) (0.310,0.2161) (0.440,0.2245) (0.570,0.2312) (0.700,0.2361) (0.830,0.2392) (0.960,0.2407) (1.090,0.2404) (1.220,0.2387) (1.350,0.2355) (1.480,0.2310) (1.610,0.2254) (1.740,0.2189) (1.870,0.2115) (2.000,0.2035) (2.130,0.1949) (2.260,0.1860) (2.390,0.1768) (2.520,0.1675) (2.650,0.1582) (2.780,0.1489) (2.910,0.1398) (3.040,0.1309) (3.170,0.1222) (3.300,0.1138) (3.430,0.1058) (3.560,0.0981) (3.690,0.0907) (3.820,0.0838) (3.950,0.0772) (4.080,0.0711) (4.210,0.0653) (4.340,0.0599) (4.470,0.0549) (4.600,0.0502) (4.730,0.0458) (4.860,0.0418) (4.990,0.0381) (5.120,0.0347) (5.250,0.0315) (5.380,0.0286) (5.510,0.0260) (5.640,0.0236) (5.770,0.0213) (5.900,0.0193) (6.030,0.0175) (6.160,0.0158) (6.290,0.0142) (6.420,0.0128) (6.550,0.0116) (6.680,0.0104) (6.810,0.0094) (6.940,0.0085) (7.070,0.0076) (7.200,0.0068) (7.330,0.0061) (7.460,0.0055) (7.590,0.0050) (7.720,0.0044) (7.850,0.0040) (7.980,0.0036) (8.110,0.0032) (8.240,0.0029) (8.370,0.0026) (8.500,0.0023) (8.630,0.0021)};
  \addplot[frozen] coordinates {(-3.200,0.0000) (-3.070,0.0000) (-2.940,0.0000) (-2.810,0.0000) (-2.680,0.0000) (-2.550,0.0000) (-2.420,0.0000) (-2.290,0.0000) (-2.160,0.0000) (-2.030,0.0000) (-1.900,0.0000) (-1.770,0.0001) (-1.640,0.0001) (-1.510,0.0002) (-1.380,0.0004) (-1.250,0.0008) (-1.120,0.0013) (-0.990,0.0022) (-0.860,0.0035) (-0.730,0.0054) (-0.600,0.0080) (-0.470,0.0115) (-0.340,0.0160) (-0.210,0.0218) (-0.080,0.0288) (0.050,0.0373) (0.180,0.0472) (0.310,0.0584) (0.440,0.0709) (0.570,0.0845) (0.700,0.0990) (0.830,0.1141) (0.960,0.1294) (1.090,0.1448) (1.220,0.1599) (1.350,0.1743) (1.480,0.1877) (1.610,0.2000) (1.740,0.2109) (1.870,0.2202) (2.000,0.2278) (2.130,0.2337) (2.260,0.2378) (2.390,0.2401) (2.520,0.2407) (2.650,0.2398) (2.780,0.2374) (2.910,0.2336) (3.040,0.2286) (3.170,0.2225) (3.300,0.2155) (3.430,0.2079) (3.560,0.1996) (3.690,0.1908) (3.820,0.1818) (3.950,0.1726) (4.080,0.1632) (4.210,0.1539) (4.340,0.1447) (4.470,0.1357) (4.600,0.1268) (4.730,0.1183) (4.860,0.1101) (4.990,0.1022) (5.120,0.0946) (5.250,0.0875) (5.380,0.0807) (5.510,0.0744) (5.640,0.0684) (5.770,0.0628) (5.900,0.0575) (6.030,0.0527) (6.160,0.0481) (6.290,0.0439) (6.420,0.0401) (6.550,0.0365) (6.680,0.0332) (6.810,0.0302) (6.940,0.0274) (7.070,0.0248) (7.200,0.0225) (7.330,0.0204) (7.460,0.0184) (7.590,0.0167) (7.720,0.0150) (7.850,0.0136) (7.980,0.0122) (8.110,0.0110) (8.240,0.0099) (8.370,0.0090) (8.500,0.0081) (8.630,0.0072)};
  \addplot[frozen] coordinates {(-3.200,0.0000) (-3.070,0.0000) (-2.940,0.0000) (-2.810,0.0000) (-2.680,0.0000) (-2.550,0.0000) (-2.420,0.0000) (-2.290,0.0000) (-2.160,0.0000) (-2.030,0.0000) (-1.900,0.0000) (-1.770,0.0000) (-1.640,0.0000) (-1.510,0.0000) (-1.380,0.0000) (-1.250,0.0000) (-1.120,0.0000) (-0.990,0.0000) (-0.860,0.0000) (-0.730,0.0000) (-0.600,0.0000) (-0.470,0.0000) (-0.340,0.0000) (-0.210,0.0001) (-0.080,0.0002) (0.050,0.0003) (0.180,0.0006) (0.310,0.0010) (0.440,0.0017) (0.570,0.0027) (0.700,0.0043) (0.830,0.0065) (0.960,0.0095) (1.090,0.0134) (1.220,0.0185) (1.350,0.0249) (1.480,0.0326) (1.610,0.0417) (1.740,0.0522) (1.870,0.0640) (2.000,0.0770) (2.130,0.0911) (2.260,0.1059) (2.390,0.1211) (2.520,0.1366) (2.650,0.1518) (2.780,0.1666) (2.910,0.1806) (3.040,0.1935) (3.170,0.2052) (3.300,0.2154) (3.430,0.2239) (3.560,0.2307) (3.690,0.2358) (3.820,0.2391) (3.950,0.2406) (4.080,0.2405) (4.210,0.2388) (4.340,0.2358) (4.470,0.2314) (4.600,0.2259) (4.730,0.2194) (4.860,0.2121) (4.990,0.2041) (5.120,0.1956) (5.250,0.1867) (5.380,0.1776) (5.510,0.1683) (5.640,0.1589) (5.770,0.1497) (5.900,0.1405) (6.030,0.1316) (6.160,0.1229) (6.290,0.1144) (6.420,0.1064) (6.550,0.0986) (6.680,0.0913) (6.810,0.0843) (6.940,0.0777) (7.070,0.0715) (7.200,0.0657) (7.330,0.0603) (7.460,0.0552) (7.590,0.0505) (7.720,0.0462) (7.850,0.0421) (7.980,0.0384) (8.110,0.0349) (8.240,0.0318) (8.370,0.0288) (8.500,0.0262) (8.630,0.0237)};
  \node[anchor=south,text=hicol,inner sep=1pt]    at (axis cs:5.5,0.45) {$q=2$};
    \node[anchor=south,text=cladecol,inner sep=1pt]
       at (axis cs:1.0,0.245) {$5$};
  \node[anchor=south,text=cladecol,inner sep=1pt]
       at (axis cs:2.5,0.245) {$4$};
  \node[anchor=south,text=cladecol,inner sep=1pt]
       at (axis cs:4.0,0.245) {$3$};
  \draw[{Straight Barb[length=3pt,width=2.6pt]}-{Straight Barb[length=3pt,width=2.6pt]},
        line width=.45pt,color=cladecol]
       (axis cs:2.5,0.335) -- (axis cs:4.0,0.335);
  \node[anchor=south,text=cladecol,inner sep=1.5pt]
       at (axis cs:3.25,0.345)
       {$\log(C^\star_3/C^\star_4)$};
  \node[anchor=north west,text=cladecol,inner sep=1.5pt,align=left] at (axis cs:-3.15,0.65)
       {$q>\theta_*$ };
  \node[anchor=north east,text=hicol,inner sep=1.5pt,align=right] at (axis cs:8.65,0.65)
       {$q<\theta_*$ };
\end{axis}

\node[anchor=north,font=\small] at ($(A.south)+(0,-0.62cm)$) {(a) the threshold $\theta_*$};
\node[anchor=north,font=\small] at ($(B.south)+(0,-0.62cm)$) {(b) leading constant};
\node[anchor=north,font=\small] at ($(C.south)+(0,-0.62cm)$) {(c) second order};
\node[anchor=north,font=\small] at ($(D.south)+(0,-0.62cm)$) {(d) the limiting distribution};
\end{tikzpicture}
\caption{Phase diagram for the shattering times of $\CTCS$.
\textbf{(a)} The threshold $\theta_*\approx2.41$.
\textbf{(b)}, \textbf{(c)} The leading and the subleading constant of the order of $D^{*}_{n,q-1}$.
\textbf{(d)} Schematic densities of its limiting law, whose shape also freezes for
$q\geq\theta_*$: these laws then differ only by a translation.}
\label{fig:freezing}
\end{figure}

The separation-time extremal process determines the number of unseparated
$q$-subsets, but in general it does not determine the number of size-$q$
clades present at the observation time.  Their count therefore requires a
separate process limit.  For $q\geq2$ and $x\in\mathbb{R}$, let  
\begin{equation}\label{eq:unified-sampling-window}
  t_{n,q}(x) := \frac{m_{n,q}+x}{\gamma_q} \vee 0
  \quad\text{and}\quad
  N_{n,q}(x) :=
  |\{\mathsf C\in\Pi(t_{n,q}(x))|_{[n]}:|\mathsf C|=q\}|.
\end{equation}

\begin{theorem}[Size-$q$ clade count]
\label{thm:intro-exact-q-clade-process}
Under the assumptions of
Theorem~\ref{thm:intro-separation-point-process}, fix an integer $q\geq2$.
The following convergence holds in
$D(\R,\mathbb Z_{\geq0})$, equipped with the local Skorokhod
$J_1$ topology.
\begin{enumerate}[(i)]
\item If $q\leq\theta_*$, then
\[
 (N_{n,q}(x))_{x\in\R}
 \xrightarrow[n\to\infty]{\mathrm{law}}
 \bigl(\Xi_q((x,+\infty])\bigr)_{x\in\R}.
\]
Conditional on $W_q$ when $q<\theta_*$, and on $Z$ when $q=\theta_*$, both defined in Theorem \ref{thm:intro-separation-point-process},
the limit $(\Xi_q((x,+\infty]))$ is a pure-death process in which each individual dies at rate one.

\item If $q>\theta_*$, then
\[
 (N_{n,q}(x))_{x\in\R}
 \xrightarrow[n\to\infty]{\mathrm{law}}
 (N_q(x))_{x\in\R},
\]
where $N_q$ is constructed in
Proposition~\ref{thm:frozen-exact-q-blocks}.   In contrast to case~\textup{(i)}, for every $a<b$, with positive probability the limiting process $x \mapsto N_{q}(x)$ is not monotone on $(a,b)$.
\end{enumerate}
\end{theorem}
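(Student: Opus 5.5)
Part (i) reduces to Theorem~\ref{thm:intro-separation-point-process}, and part (ii) goes through the block-level description of the extremal clusters behind case (iii) of that theorem.

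\textbf{Case $q\le\theta_*$.} I would first show that in the window $t_{n,q}(x)$ with $x$ in a compact set, with probability tending to one, every unseparated $q$-subset $A$ (i.e.\ $T_A>t_{n,q}(x)$) is exactly a size-$q$ clade, and no larger clade is present. I would use a first-moment computation. The expected number of unseparated $(q+1)$-subsets at time $t_{n,q}(x)$ is of order $n^{q+1}e^{-\kappa(q+1)t_{n,q}(x)}$, which tends to zero because $\kappa$ is strictly concave and $\Delta(q)\ge0$ for $q\le\theta_*$. When $q<\theta_*$ this gives $(q+1)-q\kappa(q+1)/\kappa(q)<0$. When $q=\theta_*$ I would redo the comparison with the extra $\log\log n$ term and a union over the relevant range. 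On this event, $N_{n,q}(x)=\Xi_{n,q}((x,\infty])$ for all $x$ in the compact set, simultaneously, because two distinct unseparated $q$-subsets cannot share a clade there. Finite-dimensional convergence then follows from Theorem~\ref{thm:intro-separation-point-process}, since counting functionals of a vague limit with a.s.\ no atoms at the fixed points converge. Tightness in $J_1$ is free: on this event the paths are nonincreasing counting functions of a point process with simple limit (conditionally $\PPP$ with diffuse intensity), and convergence of point processes on $(-\infty,\infty]$ gives $J_1$ convergence of their tail-count functions. The pure-death description is the standard fact that for $\PPP(Ce^{-x}\dif x)$ the process $x\mapsto\Xi((x,\infty])$ loses each point at rate one in the sense of the independent $\operatorname{Exp}$ spacings of Corollary~\ref{thm:three-regimes}. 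Concretely, given the count at $x$, the remaining atoms are i.i.d.\ with the density $e^{-(y-x)}$ on $(x,\infty)$.

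\textbf{Case $q>\theta_*$.} Here $N_q$ is the object of Proposition~\ref{thm:frozen-exact-q-blocks}, so I would argue through the structure used in case (iii) of Theorem~\ref{thm:intro-separation-point-process}.

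1. Condition on $\mathcal F_s$, the fragmentation up to a large fixed time $s$ on the block scale. The extremal contributions come from rare fragments of size roughly $e^{-\kappa_* t/\theta_*}n$ (the frozen, Bramson-type scale).
2. Each such fragment, carrying a Poisson($\Theta(1)$) number of labels, independently produces a local finite fragmentation tree. Its size-$q$ clade count, shifted in time, is the decoration-level counting process.
3. Hence $N_{n,q}$ is asymptotically a superposition, over the atoms of $\PPP(C_q^\star Z e^{-x}\dif x)$, of independent shifted copies of a cluster clade-count process, plus negligible contributions from the remaining fragments. The negligibility follows from a first-moment bound as above.
4. Convergence in $J_1$ follows from the joint convergence of the atom locations with the finitely many clusters that are relevant on a compact window. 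Each cluster process is a finite-jump càdlàg path, and different clusters almost surely have no common jump times.

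\textbf{Non-monotonicity and the main obstacle.} Non-monotonicity is the easy part: with positive probability a cluster is started by a clade of size $q+1$ that erodes or dislocates into a size-$q$ clade plus a singleton inside $(a,b)$, an upward jump, and it dies before $b$. By the Poisson structure this happens with positive probability in any window. I expect the main obstacle to be step 3: controlling, uniformly in the window, the contributions of clades of size larger than $q$ that are not yet separated. This requires the tilted-measure and ballot-type estimates underlying Lemma~\ref{lem:brw-inputs} applied to $\sum_{\text{fragments}}|\mathsf C|^{q}$, rather than to subsets.
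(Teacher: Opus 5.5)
\paragraph{Case~(i).} Your reduction is the paper's: on a compact window, $N_{n,q}(x)=\Xi_{n,q}((x,+\infty])$ as soon as no clade of size $\geq q+1$ is present at time $t_{n,q}(-a)$, and then you use continuity of the tail-count map and the Poisson description. The gap is in your proof that larger clades are absent. The condition $(q+1)-q\kappa(q+1)/\kappa(q)<0$ is equivalent to $(q+1)/\kappa(q+1)<q/\kappa(q)$. The map $\theta\mapsto\theta/\kappa(\theta)$ decreases only on $(1,\theta_*]$, and $q+1\geq3>\theta_*$ always (Remark~\ref{emk:theta-star-less-than-3}). Concavity only gives the upper bound $\kappa(q+1)\leq\kappa(q)+\kappa'(q)$, which is the wrong direction.

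The inequality in fact fails in the paper's own examples:
\begin{itemize}
\item For $\CTCS$ with $q=2$, $\kappa(2)=1$ and $\kappa(3)=3/2$, so $\binom n3e^{-\kappa(3)t_{n,2}(x)}\to e^{-3x/2}/6\not\to0$.
\item For the Dirichlet example with $b=4$, $\alpha=1$ ($\theta_*\approx2.04$), $\kappa(2)=3/5$ and $\kappa(3)=4/5$, so the mean is of order $n^{1/3}e^{-4x/3}$.
\item For $\operatorname{PD}(0,1)$ at $q=\theta_*=2$, where $\kappa(\theta)=1-1/\theta$, the mean is of order $n^{1/3}(\log n)^{2/3}e^{-4x/3}$. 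No $\log\log n$ bookkeeping rescues this.
\end{itemize}
Since $q+1>\theta_*$, the average $\E[S_{q+1}(t)]=e^{-\kappa(q+1)t}$ is carried by atypically large fragments. This is exactly the non-honest expectation from the heuristics.

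The missing idea is to argue quenched, given $\mathcal I$:
\[
 \P\bigl(\Pi(t)|_{[n]}\text{ has a clade of size}\geq q+1\bigm|\mathcal I\bigr)
 \leq 1\wedge n^{q+1}S_{q+1}(t)\leq 1\wedge\bigl(n^qS_q(t)\cdot nX_1(t)\bigr),
\]
evaluated at $t=t_{n,q}(-a)$. Then invoke Lemma~\ref{lem:point-process-inputs}, which gives $n^qS_q\to e^{a}\mathcal W_q$ and $nX_1\to0$ in probability. The second limit comes from the local comparison $\kappa(pq)>p\kappa(q)$ for some $p>1$ close to $1$ when $q<\theta_*$, and from the $3/2$-minimum estimate when $q=\theta_*$. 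Dominated convergence finishes the argument.

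\paragraph{Case~(ii).} Step~3 has the same defect. At $t=t_n(a)$, the unconditional mean $n^q\E\sum_iX_i(t)^q\ind{nX_i(t)<\epsilon}$ is of order $\epsilon^{q-\theta_*}\log n$. The many-to-one formula sees only the $t^{-1/2}$ local-CLT factor, not the $t^{-3/2}$ ballot factor, so a first-moment bound does not make small fragments negligible. You flag this, but no new ballot estimate is needed. The paper's route is:
\begin{itemize}
\item condition on $\mathcal I(t_n(a))$, not on the fragmentation at a fixed time $s$;
\item Poissonize the sample, so that the count on $[a,\infty)$ is conditionally $\mathcal K_q(\mathcal E^{\mathrm{fr}}_{n,a})$ in law;
\item discard fragments with $nX_i<\epsilon$ via the conditional bound $\frac1{q!}\int_{(0,\epsilon)}z^q\,\mathcal E^{\mathrm{fr}}_{n,a}(\dif z)$ and Lemma~\ref{lem:extremal-masses} with $g(y)=y^q$.
\end{itemize}
The last step is where $q>\theta_*$ and Lemma~\ref{lem:brw-inputs}~\textup{(\ref{input:Madaule})} enter.

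Your limit object is also not established. You describe it as a superposition, over the atoms of $\PPP(C_q^\star Ze^{-x}\dif x)$, of i.i.d.\ shifted copies of a ``cluster clade-count process'', but that process is neither defined nor proved to exist. The decorated Poisson structure is established only for $\Xi_q$, and only abstractly, via exp-$1$-stability and Lemma~\ref{lem:maillard-exp-stable}. The theorem's $N_q$ is instead characterized by $\tau_aN_q=\mathcal K_q(\mathcal E^{\mathrm{fr}}_a)$ under a coupling for each $a$, together with consistency in law across $a$ (Proposition~\ref{thm:frozen-exact-q-blocks}). You must either prove your representation or identify your limit with that $N_q$.

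Your non-monotonicity argument is essentially the paper's and is fine once the limit is identified. It uses a fragment with $q+1$ labels splitting into sizes $q$ and $1$ at rate $(q+1)(\kappa(q+1)-\kappa(q))>0$, then the new size-$q$ clade dying, with no common jump times.
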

Figure~\ref{fig:clade-count} contrasts the two kinds of sample path.

\begin{figure}[!htbp]
\centering
\begin{tikzpicture}[
  x=0.85cm,y=0.4cm,font=\footnotesize,
  lab/.style={inner sep=1.2pt},
   ax/.style={line width=.5pt,{Straight Barb[length=3.4pt,width=2.8pt]}-{Straight Barb[length=3.4pt,width=2.8pt]}},
  path1/.style={line width=.9pt,color=cladecol},
  jump/.style={
    line width=.5pt,color=cladecol,
    dash pattern=on 1.4pt off 1.4pt
  },
  guide/.style={
    line width=.4pt,color=cladecol,
    dash pattern=on 1.6pt off 1.4pt
  },
  lead/.style={
    line width=.4pt,color=black!55,
    -{Straight Barb[length=2.6pt,width=2.2pt]}
  },
]

\begin{scope}
  \draw[ax] (-3.45,0) -- (3.45,0);
  \node[lab,anchor=north east,inner sep=2.5pt]
       at (3.45,0) {$x$};
  \draw[line width=.4pt] (0,-0.28) -- (0,0.28);
  \node[lab,anchor=north,inner sep=2.5pt]
       at (0,-0.24) {$0$};

  \foreach \xa/\xb/\y in {
    -3.00/-2.40/6, -2.40/-1.90/5, -1.90/-1.35/4,
    -1.35/-0.70/3, -0.70/0.25/2, 0.25/1.70/1,
    1.70/3.30/0
  }
    {\draw[path1] (\xa,\y) -- (\xb,\y);}

  \foreach \x/\ya/\yb in {
    -2.40/6/5, -1.90/5/4, -1.35/4/3,
    -0.70/3/2, 0.25/2/1, 1.70/1/0
  }
    {\draw[jump] (\x,\ya) -- (\x,\yb);}

  \draw[guide] (-3.00,6) -- (-3.00,6.95);
  \node[lab,anchor=west,text=cladecol,inner sep=2pt]
       at (-2.95,6.8) {$\uparrow\infty$};

  \node[lab,anchor=south west,text=cladecol,inner sep=2.5pt]
       at (0.35,1.05) {$\Xi_q((x,+\infty])$};

  \node[lab,anchor=north,text=black!62,
        align=center,text width=5.9cm]
       at (0.05,-1.35)
       {$q\leq\theta_*$: a pure-death limiting path};
\end{scope}

\begin{scope}[xshift=7.4cm]
  \draw[ax] (-3.45,0) -- (3.45,0);
  \node[lab,anchor=north east,inner sep=2.5pt]
       at (3.45,0) {$x$};
  \draw[line width=.4pt] (0,-0.28) -- (0,0.28);
  \node[lab,anchor=north,inner sep=2.5pt]
       at (0,-0.24) {$0$};

  \foreach \xa/\xb/\y in {
    -3.00/-2.55/5, -2.55/-2.00/4, -2.00/-1.45/5,
    -1.45/-0.75/4, -0.75/-0.40/3, -0.40/-0.05/2,
    -0.05/0.60/3, 0.60/1.15/2, 1.15/1.65/1,
    1.65/2.25/2, 2.25/2.80/1, 2.80/3.30/0
  }
    {\draw[path1] (\xa,\y) -- (\xb,\y);}

  \foreach \x/\ya/\yb in {
    -2.55/5/4, -2.00/4/5, -1.45/5/4,
    -0.75/4/3, -0.40/3/2, -0.05/2/3,
    0.60/3/2, 1.15/2/1, 1.65/1/2,
    2.25/2/1, 2.80/1/0
  }
    {\draw[jump] (\x,\ya) -- (\x,\yb);}

  \node[lab,anchor=south west,text=cladecol,inner sep=2.5pt]
       at (1.72,2.05) {$N_q(x)$};

  \draw[lead] (-1.05,6.35) -- (-1.98,5.15);
  \node[lab,anchor=west,text=black!62,
        align=left,text width=3.35cm]
       at (-1.0,6.3)
       {an extremal fragment splits and makes a new size-$q$ clade};

  \node[lab,anchor=north,text=black!62,
        align=center,text width=5.9cm]
       at (0.05,-1.35)
       {$q>\theta_*$: a possible non-monotone limiting path};
\end{scope}
\end{tikzpicture}
\caption{Schematic limiting paths of the size-$q$ clade-count process.}
\label{fig:clade-count}
\end{figure}

 \begin{remark} 
  \label{emk:theta-star-less-than-3}
Whenever $\theta_*$ exists, it necessarily satisfies $\theta_*\in (1,3)$. The cases $\theta_* \in (1,2)$, $\theta_*=2$ may occur, see \eqref{eq:ex-pa-kappa-delta} in sub-section \ref{sec:ctcs-verification}.
Indeed, let $f(u)=3u^2-2u^3$.
For $x,y\geq0$ with $x+y\leq1$,
$f(x+y)-f(x)-f(y)=6xy(1-x-y)\geq0$.
Since $f$ is increasing on $[0,1]$, it follows that
$\sum_i f(s_i)\leq f(\sum_i s_i)\leq1$ for every
$\mathbf s\in\mathcal S^\downarrow$.
Using $-\log u<(1-u)/u$ for $0<u<1$, we obtain
\[
 -3\sum_i s_i^3\log s_i-\Bigl(1-\sum_i s_i^3\Bigr)
 <\sum_i(3s_i^2-2s_i^3)-1\leq0
\]
for every $\mathbf s\notin\{\mathbf0,(1,0,\ldots)\}$.
At $\mathbf s=\mathbf0$, the left-hand side equals $-1$.
Integrating against $\nu$ gives $\Delta(3)<0$, so the strict
decrease of $\Delta$ yields $\theta_*<3$. 
 \end{remark}

\subsection{Why phase transitions occur? Heuristics and proof strategies} 
  
We begin by explaining where the freezing recorded in
\eqref{eq:intro-point-process-normalization} comes from, and why one and the
same threshold governs the two limit theorems above.  The three lines of
\eqref{eq:intro-point-process-normalization} are produced by a single
conditional first-moment computation, and the freezing they display is the
freezing transition of the partition function of a branching random walk.  We
first carry out that computation; we then read off from it which block sizes
survive at the extremal scale; and we finally explain how that one structural
difference forces both the transitions in 
Theorem~\ref{thm:intro-separation-point-process} and \ref{thm:intro-exact-q-clade-process}.  The discussion is heuristic;
the statements it points to are proved in Sections~\ref{sec:poisson}
and~\ref{sec:frozen}.

\medskip
\noindent\textbf{The conditional first moment guides the choice of $\gamma_q$ and $m_{n,q}$.}
Kingman's paintbox theorem for exchangeable partitions \cite{Kingman} recovers 
$\Pi(t)$ from the mass fragmentation process $X(t)=(X_i(t))_{i\ge1}$, defined in
\eqref{eq:def-Xi} below. 
 Conditionally on $X(t)$,  partition $\mathbb{N}$ via an $X(t)$-paintbox gives $\Pi(t)$. Thus for $q \ge 2$,   
given $X(t)$, the  conditional probability of 
a given $q$-set (a $q$-element subset of $[n]$) is still
unseparated at time $t$ is $S_q(t):= \sum_{i\geq1}X_i(t)^q$. By  linearity we get 
\begin{equation}\label{eq:heuristic-first-moment}
 \E\Bigl[\,\sum_{A\in\binom{[n]}q}\ind{T_A>t} \,\Bigm|\,  X(t) \,\Bigr]
 =\binom nq S_q(t)
 \sim\frac{n^q}{q!}S_q(t).
\end{equation}
Therefore to observe the last (few) unseparated $q$-sets, we seek a time $t$ at which
\begin{equation}
    n^qS_q(t) \asymp 1  . \label{eq-seek-time-t}
\end{equation}

The fragmentation property implies that $(\sum_{i} \delta_{\log(1/X_i(t))}: t>0)$, looked at discrete time skeleton $t\in \{ nh, n \ge 1\}$, forms a branching random walk (BRW). 
Now $S_q(t)=\sum_ie^{-q\log(1/X_i(t))}$ is the partition function of this
BRW, at inverse temperature
$q$. The freezing phase transitions of BRW partition functions have been well-studied in 
\cite{HuShi,AidekonShi,MadauleTip}. The 
typical decay of $S_q(t)$  changes when
$\Delta(q)=q\kappa'(q)-\kappa(q)$ changes sign at $\theta_*$.
Using only the average $\E[S_q(t)]=e^{-\kappa(q)t}$ from
\eqref{eq:exponent-S} would predict $t=q\log n/\kappa(q)$ for every $q$ in \eqref{eq-seek-time-t}.
At and above the threshold, however, this expectation is  no longer honest. From the known result in BRW, we derive:
\begin{enumerate}[(i)]
\item If $q<\theta_*$, then $\Delta(q)>0$ and
$S_q(t)=e^{-\kappa(q)t}W_q(t)$, with $W_q(t)\to W_q>0$ and $X_1(t)^q/ S_q(t) \to 0 $.  
Solving $n^qe^{-\kappa(q)t}=1$ gives
\[
 \kappa(q)t=q\log n.
\]

\item If $q=\theta_*$, the Seneta--Heyde scaling   gives $S_q(t)$ of order
$t^{-1/2}e^{-\kappa_*t}$ and $X_1(t)^q/ S_q(t) \to 0 $.  Solving $n^qt^{-1/2}e^{-\kappa_*t}=1$ we get 
\[
 \kappa_*t 
 =q\log n-\tfrac12\log\log n+O(1).
\]

\item If $q>\theta_*$, then $\Delta(q)<0$ and the sum $S_q(t)$ is dominated by
the largest fragment mass. That is, $S_q(t) \asymp X_1(t)^q$ and $X_{1}(t)$ is of order    $e^{- \kappa_*t/\theta_*}t^{-3 /(2\theta_*)}$. Thus \eqref{eq-seek-time-t} becomes
\begin{equation}\label{eq:heuristic-frozen-calibration}
 \begin{aligned}
 nX_1(t)\asymp1,\qquad
 \kappa_*t 
  =\theta_*\log n-\tfrac32\log\log n+O(1).
 \end{aligned}
\end{equation} 
\end{enumerate} 
This recover
$(\gamma_q,m_{n,q})$ in \eqref{eq:intro-point-process-normalization} and suggest \[ \gamma_q D^{*}_{n,q-1} = m_{n,q} + O_{\P}(1) .\]  
Writing
$\gamma_qt-m_{n,q}=x$  yields the sampling window
$ t_{n,q}(x)=\frac{m_{n,q}+x}{\gamma_q} $.

  \medskip
\noindent\textbf{Clades containing $q$-sets: uniform or mixed sizes.}
The key distinction between the two regimes $q\leq\theta_*$ and $q>\theta_*$
is whether, in the time window $t_{n,q}(0)+O(1)$, unseparated $q$-sets
lie in different clades of size exactly $q$ or can coexist within larger clades.
This is governed by the chance of additional labels landing in a fragment
already containing a given $q$-set.
The relative positions of $t_{n,q}(0)$ and $t_{n,q+1}(0)$ reflect
the same distinction.

If $q\leq\theta_*$, then by (i) (ii), we  have $nX_1(t_{n,q}(x))\to0$ for each fixed $x$,
so the largest fragments lie below the sampling resolution $1/n$.
Conditional on   $X(t_{n,q}(x))$, a fragment carrying a given $q$-set
contains an additional  label in $[n]$ with probability at most
$nX_1(t_{n,q}(x))\to0$.
Moreover, the comparison of the shattering scales gives 
$t_{n,q}(0)-t_{n,q+1}(0)  \to\infty$.
(The gap is of order $\log n$ when $q<\theta_*$, as
$\theta\mapsto\theta/\kappa(\theta)$ is strictly decreasing on
$(1,\theta_*]$, and of order $\log\log n$ when
$q=\theta_*$.)
That is, at time $t_{n,q}(x)=t_{n,q}(0)+O(1)$, all $(q+1)$-sets
have already separated with high probability, leaving no clade
of size $\geq q+1$. In summation,
 clades containing   $q$-sets at time $t_{n,q}(0)+O(1)$  are therefore
\emph{uniform in size}: with high probability, each has size exactly $q$,
so distinct surviving $q$-sets lie in distinct fragments. 
This is Figure~\ref{fig:decoration}\textbf{(a)}.

If $q>\theta_*$, then \eqref{eq:heuristic-frozen-calibration} places
the largest fragment masses at order $1/n$ at time $t_{n,q}(x)$,
for each fixed $x$.
Conditional on  $X(t_{n,q}(x))$, 
for fixed $i$, the number of labels of $[n]$ in the
$i$-th largest fragment has approximately a Poisson  distribution with mean $nX_i(t_{n,q}(x)) \asymp 1$.
Clades containing   $q$-sets can therefore have
\emph{mixed sizes}: for every fixed $r\geq q$, the probability
that a clade of size $r$ presents at time $t_{n,q}(x)$ stays bounded
away from zero.
Correspondingly, neither $\gamma_q$ nor $m_{n,q}$ depends on $q$
in this regime, and the shattering times
$D^*_{n,q-1},\ldots,D^*_{n,r-1}$ all lie within a common time window
 $ \frac{\theta_*}{\kappa_*} \log n - \frac{3}{2 \kappa_*} \log\log n + O(1)$. 
This is Figure~\ref{fig:decoration}\textbf{(b)}.
  
\begin{figure}[tb]
\centering
\begin{tikzpicture}[
  x=1cm,y=0.55cm,font=\footnotesize,
  lab/.style={inner sep=1.2pt},
  frag/.style={line width=1.5pt,color=masscol!85},
  bigfrag/.style={line width=2.4pt,color=masscol!85},
  hifrag/.style={line width=2.4pt,color=hicol},
  ldot/.style={circle,fill=black!45,inner sep=.75pt},
  hdot/.style={circle,fill=hicol,inner sep=.85pt},
  atom/.style={line width=.8pt,color=cladecol},
  hatom/.style={line width=.9pt,color=hicol},
  ax/.style={line width=.5pt,-{Straight Barb[length=3.4pt,width=2.8pt]}},
  axx/.style={line width=.5pt,{Straight Barb[length=3.4pt,width=2.8pt]}-%
                             {Straight Barb[length=3.4pt,width=2.8pt]}},
  br/.style={decorate,decoration={brace,amplitude=2.6pt},line width=.4pt},
  brm/.style={decorate,decoration={brace,mirror,amplitude=2.6pt},line width=.4pt},
  note/.style={lab,anchor=north,text=black!62,align=flush center},
  mult/.style={lab,anchor=south,inner sep=1.6pt,font=\scriptsize},
  limit/.style={lab,anchor=north,text=cladecol,align=flush center,text width=7.6cm},
]
\relpenalty=10000 \binoppenalty=10000 

\begin{scope}
  \node[font=\small] at (-1.05,0.95) {(a)};
  \draw[ax] (-0.28,0) -- (-0.28,1.75);
  \node[lab,anchor=south] at (-0.28,1.75) {$nX_i$};
  \draw[dotted,line width=.45pt,color=black!45] (-0.28,1.40) -- (5.05,1.40);
  \node[lab,anchor=east,inner sep=2pt] at (-0.32,1.40) {$1$};
  \draw[line width=.45pt,color=black!55] (-0.37,0.62) -- (-0.19,0.72);
  \draw[line width=.45pt,color=black!55] (-0.37,0.72) -- (-0.19,0.82);
  \draw[line width=.5pt,color=black!55] (-0.28,0) -- (5.05,0);
  \foreach \k/\h in {0/0.50,1/0.475,2/0.45,3/0.425,4/0.40,5/0.375,6/0.35,7/0.33,
                     8/0.31,9/0.29,10/0.27,11/0.25,12/0.23,13/0.21,14/0.19,
                     15/0.17,16/0.15,17/0.13,18/0.11,19/0.09}
     {\draw[frag] (0.15+0.25*\k,0) -- (0.15+0.25*\k,\h);}
  \foreach \k/\h in {0/0.50,3/0.425,8/0.31}{
     \draw[hifrag] (0.15+0.25*\k,0) -- (0.15+0.25*\k,\h);
     \node[hdot] at (0.15+0.25*\k-0.075,\h+0.18) {};
     \node[hdot] at (0.15+0.25*\k+0.075,\h+0.18) {};}
  \foreach \k/\h in {1/0.475,5/0.375,11/0.25,15/0.17,18/0.11}
     {\node[ldot] at (0.15+0.25*\k,\h+0.18) {};}
  \node[note,text width=6.9cm] at (2.40,-0.75)
       {$q\leq\theta_*$, drawn for $q=2$: \\
       every fragment has mass $o(1/n)$, so a fragment
        carrying a $q$-set almost never carries more};

  \draw[ax,color=black!60] (5.30,1.05) -- (5.85,1.05);
  \node[lab,anchor=north,align=center,text=black!60,inner sep=2pt,font=\scriptsize]
       at (5.575,2.5) {future splits\\$A\mapsto T_A$};

  \draw[axx] (6.05,0) -- (13.34,0);
  \node[lab,anchor=south west,inner sep=2.5pt] at (6.32,0.02) {$\gamma_qT_A-m_{n,q}$};
  \foreach \x in {9.90,9.62,9.40,9.22,9.08,8.97}{\draw[atom] (\x,0) -- (\x,0.75);}
  \node[lab,anchor=east,text=cladecol,inner sep=2pt] at (8.91,0.38) {$\cdots$};
  \foreach \x in {12.60,11.30,10.40}{\draw[hatom] (\x,0) -- (\x,0.75);}
  \draw[dashed,line width=.4pt,color=black!45] (10.15,-0.10) -- (10.15,1.75);
  \node[lab,anchor=north,text=black!60] at (10.15,-0.12) {$x$};
  \draw[br,color=hicol] (11.30,0.98) -- (12.60,0.98);
  \node[lab,anchor=south,text=hicol,inner sep=2.5pt] at (11.95,0.98)
       {$\mathsf\Delta_{q,1}$};
  \draw[br,color=hicol] (10.40,0.98) -- (11.30,0.98);
  \node[lab,anchor=south,text=hicol,inner sep=2.5pt] at (10.85,0.98)
       {$\mathsf\Delta_{q,2}$};
  \node[note,text width=5.6cm] at (9.75,-0.75)
       {each highlighted pair splits into $1+1$, giving one atom}; 
\end{scope}

\begin{scope}[yshift=-4.25cm]
  \node[font=\small] at (-1.05,0.95) {(b)};
  \draw[ax] (-0.28,0) -- (-0.28,2.20);
  \node[lab,anchor=south] at (-0.28,2.20) {$nX_i$};
  \draw[dotted,line width=.45pt,color=black!45] (-0.28,1) -- (5.05,1);
  \node[lab,anchor=east,inner sep=2pt] at (-0.32,1) {$1$};
  \draw[line width=.5pt,color=black!55] (-0.28,0) -- (5.05,0);
  \draw[hifrag] (0.35,0) -- (0.35,1.55);
  \draw[hifrag] (0.90,0) -- (0.90,1.15);
  \foreach \dx in {-0.195,-0.065,0.065,0.195}{\node[hdot] at (0.35+\dx,1.77) {};}
  \foreach \dx in {-0.13,0,0.13}{\node[hdot] at (0.90+\dx,1.37) {};}
  \node[lab,anchor=west,text=hicol,inner sep=2pt] at (0.58,2.05) {4 labels};
  \node[lab,anchor=west,text=hicol,inner sep=2pt] at (1.08,1.37) {3 labels};
  \foreach \x/\h in {1.45/0.95,2.00/0.72,2.55/0.58,3.10/0.45,3.65/0.30,4.20/0.20,4.75/0.12}
     {\draw[bigfrag] (\x,0) -- (\x,\h);}
  \node[ldot] at (2.00,0.93) {};
  \node[ldot] at (2.48,0.79) {}; \node[ldot] at (2.62,0.79) {};
  \node[ldot] at (3.65,0.51) {};
  \node[note,text width=6.9cm] at (2.40,-0.75)
      {$q>\theta_*$, drawn for $q=3$:\\
 finitely many fragments with mass $\Theta(1/n)$,
 each carrying approximately $\Poi(\Theta(1))$ labels};

  \draw[ax,color=black!60] (5.30,1.05) -- (5.85,1.05);
  \node[lab,anchor=north,align=center,text=black!60,inner sep=2pt,font=\scriptsize]
       at (5.575,2.5) {future splits\\$A\mapsto T_A$};

  \draw[axx] (6.05,0) -- (13.34,0);
  \node[lab,anchor=south west,inner sep=2.5pt] at (6.32,0.02) {$\gamma_qT_A-m_{n,q}$};
  \node[lab,anchor=east,text=cladecol,inner sep=2pt] at (8.84,0.38) {$\cdots$};
  \foreach \x in {8.90,8.96,9.03}{\draw[atom] (\x,0) -- (\x,0.75);}
  \foreach \x in {9.30,9.50}{\draw[atom] (\x,0) -- (\x,0.75);}
  \node[mult,text=cladecol] at (9.30,0.73) {$\times3$};
  \draw[brm,color=cladecol!70] (9.24,-0.14) -- (9.56,-0.14);
  \draw[atom] (9.94,0) -- (9.94,0.75);
  \draw[brm,color=cladecol!70] (9.86,-0.14) -- (10.02,-0.14);
  \draw[atom] (10.44,0) -- (10.44,0.75);
  \node[mult,text=cladecol] at (10.44,0.73) {$\times4$};
  \draw[brm,color=cladecol!70] (10.36,-0.14) -- (10.52,-0.14);
  \draw[dashed,line width=.4pt,color=black!45] (10.75,-0.10) -- (10.75,1.75);
  \node[lab,anchor=north,text=black!60] at (10.75,-0.12) {$x$};
  \draw[hatom] (11.10,0) -- (11.10,0.75);
  \draw[brm,color=hicol] (11.02,-0.14) -- (11.18,-0.14);
  \draw[hatom] (11.85,0) -- (11.85,0.75);
  \node[mult,text=hicol] at (11.85,0.73) {$\times3$};
  \draw[hatom] (12.60,0) -- (12.60,0.75);
  \draw[br,color=hicol] (11.85,1.42) -- (12.60,1.42);
  \node[lab,anchor=south,text=hicol,inner sep=2.5pt] at (12.22,1.42)
       {$\mathsf\Delta_{q,1}$};
  \draw[brm,color=hicol] (11.78,-0.14) -- (12.67,-0.14);
\node[note,text width=3.9cm,text=hicol,anchor=north east]
     at (13.34,-0.32)
     {$4\to1+3$: three triples separate at once;
      the remaining triple splits later};
\node[note,text width=3.3cm,text=hicol] at (7.70,-0.75)
     {the 3-label fragment splits into $1+2$, giving one atom};
 
\end{scope}
\end{tikzpicture}
\caption[Separation-time extremal process]{Separation-time extremal process.
Bars  \ \textcolor{masscol}{\rule[-.4pt]{1.5pt}{6.5pt}} \  on the left show selected fragment masses $nX_i$ at time $t_{n,q}(x)$;
dots  \textcolor{black!45}{$\bullet$}  mark labels of $[n]$.
Ticks  \textcolor{cladecol}{\rule[-.4pt]{.8pt}{7pt}}  on the right are atoms of $\Xi_{n,q}$. 
\textcolor{cladecol}{Blue atoms} record $q$-set separations before $t_{n,q}(x)$;
these past separation times cannot be inferred from the left panel.
\textcolor{hicol}{Red atoms} arise from future splits of the highlighted fragments.
The mark $\times k$ means multiplicity $k$.}
\label{fig:decoration}
\end{figure}
\medskip
\noindent\textbf{ Clade sizes explain the two phase transitions ($q\leq\theta_*$).}   
Since with high probability clades containing  $q$-sets are uniform in sizes at time $t_{n,q}(x)$,    $\Xi_{n,q}$  describes both separation times and clade counts  and in particular    
$
 N_{n,q}(x)=\Xi_{n,q}((x,+\infty]) $ with high probability.


We   explain the Poisson limit of $\Xi_{n,q}$, and starts from the asymptotics of its mean. Rewriting \eqref{eq:heuristic-first-moment}  at time $t_{n ,q}(x)$ and further using  the BRW
estimates  (Lemma~\ref{lem:point-process-inputs}) gives, for some positive random variable $\mathcal{W}_q$, 
\begin{equation}\label{eq-conv-mean}
   \E[\Xi_{n,q}((x,+\infty])\mid X(t_{n,q}(x)) ]
   =\binom nq S_q(t_{n,q}(x))
   \xrightarrow[n\to\infty]{\P}
   \frac{\mathcal W_q}{q!}e^{-x}.
\end{equation}  
Furthermore, $\Xi_{n,q}((x,+\infty])$ counts the unseparated $q$-sets
at time $t_{n,q}(x)$,  and the dependence among its summands becomes
negligible, and thus should be  approximated by Poisson distribution. Indeed,
at time $t$, the events that fixed disjoint $q$-sets remain
unseparated are conditionally independent given $X(t)$.
Two distinct overlapping $q$-sets can both remain unseparated only
if their union lies in the same clade, whose size must exceed $q$. But
with high probability, no such clade is present at time $t_{n,q}(x)$. 
Together with \eqref{eq-conv-mean}, this hints
that $\Xi_{n,q}((x,+\infty])$  converges to  a 
Poisson r.v. with random mean $(\mathcal W_q/q!)e^{-x}$. It  also suggest the 
weak convergence of $\Xi_{n,q}$ to $\Xi_q$, where, conditional on
$\mathcal W_q$, $\Xi_q$ is a PPP with intensity
$(\mathcal W_q/q!)e^{-x}\dif x$. 
The separation-gap and clade-count limits can  be read from $\Xi_q$.  Below we give  a direct explanation. 

Fix $a\in\R$. With high probability, all clades in
$\Pi(t_{n,q}(a))|_{[n]}$ have size at most $q$.
Conditional on this partition, the size-$q$ clades evolve independently,
each with an exponential waiting time of rate $\kappa(q)$ until
its labels separate.
In the rescaled coordinate $x=\kappa(q)t-m_{n,q}$, these waiting
times have distribution $\operatorname{Exp}(1)$.
When $j$ size-$q$ clades remain, the waiting time to the next
separation is the minimum of $j$ independent
$\operatorname{Exp}(1)$ variables, hence has distribution
$\operatorname{Exp}(j)$.
That separation reduces the count to $j-1$, producing only smaller
clades; by memorylessness, the remaining clocks are again independent
with rate one.
This explains the pure-death limit for $N_{n,q}(x)$.
Between the $(j+1)$st and the $j$th last separation times,
exactly $j$ size-$q$ clades remain.
Thus the limiting separation time gaps are these holding times:
they are independent, with
$\mathsf\Delta_{q,j}\sim\operatorname{Exp}(j)$.

\medskip
\noindent\textbf{Clade sizes explain the two phase transitions
($q>\theta_*$).}  Fix $a \in \mathbb{R}$. 
Conditional on the fragment masses  $X(t_{n,q}(a))$, the $i$-th fragment then contains approximately
$\Poi(nX_i(t_{n,q}(a)))$ labels in $[n]$.
By the fragmentation property,  
their subsequent evolutions are independent fragmentation processes
started from the corresponding finite sets of labels. Consequently, the restriction of $ \Xi_{n,q} $ on $(a,\infty]$ should be approximately 
\begin{equation}
   \sum_{i\geq1}\vartheta_a
 \mathcal R^{(i)}_{q,nX_i(t_{n,q}(a))},\label{eq-Xi-n-q-inro}
\end{equation}
where $\mathcal R_{q,\lambda}$ records the separation times,
scaled by $\kappa_*$, of a fragmentation started from
$\Poi(\lambda)$ labels,  and
$\vartheta_a$ shifts those times by $a$. Indeed, $\vartheta_a$ comes because a separation occurring $s$ time units after $t_{n,q}(a)$
has rescaled time $a+\kappa_*s$. 
The BRW extremal-process limit \cite{MadauleTip} gives weak convergence  
fragment masses extremal process $\sum_{i} \delta_{n X_i(t_{n,q}(a))}$ to a limiting point process $ \sum_{i} \delta_{\lambda_i(a)}$, which is a pushforward of a (randomly shifted) decorated poisson point process under the exponential map. 
This suggests  
\begin{equation}
   \Xi_{n,q}\res{(a,+\infty]}
   \Rightarrow
   \sum_{i\geq1}\vartheta_a
   \mathcal R^{(i)}_{q,\lambda_i(a)}
   =:\Xi_{q,a}. \label{eq-Xi-q-a-intro}
\end{equation} 
The limits for different $a$ are consistent under restriction,
defining $\Xi_q$ on the whole line.
The limiting  process $ \sum_{i} \delta_{\lambda_i(a)}$  already has a Poisson cluster structure, and each $\mathcal{R}^{(i)}_{q,\lambda_i(a)}$ records the separation  times comes from the same copy of a finite fragmentation process. This suggests that $\Xi_q$ is a (randomly shifted) DPPP too.  
We establish this representation by removing the random shift,
verifying exp-$1$-stability, and applying the Poisson representation
in Lemma~\ref{lem:maillard-exp-stable} below.

The consequences for separation time gaps and clade counts can be seen in one
split. 
A split of a size-$(q+1)$ clade into a singleton and a size-$q$
clade separates $q$ distinct $q$-sets simultaneously, producing
repeated separation times and increasing the size-$q$ clade count
by one. 
This illustrates  how zero separation gaps arise and how
larger clades create upward jumps in the size-$q$ clade count.
See Figure~\ref{fig:decoration}\textbf{(b)} for the coinciding separation
times and Figure~\ref{fig:clade-count} for the resulting upward jumps of the
count. 
Moreover, a size-$r$ clade contributes $\binom rq$ to the number
of unseparated $q$-sets, but contributes to the size-$q$ clade count
only if $r=q$.
Thus in general $N_q(x)=\Xi_q((x,+\infty])$ no longer holds. The construction of $N_q$ parallels that of $\Xi_q$:
we record each fragment's size-$q$ clade count process instead of
its separation times in \eqref{eq-Xi-n-q-inro}, and pass to the limit in their superposition, 
see \eqref{eq:def-K-q}.

\begin{remark}[Non-lattice condition]
\label{rem:lattice-case}
For $q\leq\theta_*$, individual fragments become negligible at the
sampling scale, and their contribution is governed by
the additive martingale, with Seneta--Heyde scaling at criticality.
These inputs do not require a nonlattice assumption.
For $q>\theta_*$, individual extremal fragments remain visible,
so lattice-phase oscillations in their sizes may persist in the
centered-height distributions. We thus suspect that, in the
lattice case, convergence in the frozen regime may hold only along
suitable subsequences, with limits depending on the lattice phase.
\end{remark}

\subsection{\texorpdfstring{CTCS $(n)$}{CTCS (n)} revisited and further examples}
\label{sec:ctcs-verification}
All examples in this section has erosion  $c=0$. 
We write $B$ for the beta function, $ \Gamma $ for the Gamma function.
\medskip 

\noindent
\textbf{Critical Beta-splitting tree.}
\label{sec:ex-ctcs} By \cite[Section 4.5.]{AldousJansonIII}, the homogeneous fragmentation corresponding to $\CTCS(n)$ has erosion coefficient $c=0$ and 
 dislocation measure $\nu$ given by  the pushforward of the measure
\begin{equation}\label{eq:ex-ctcs-rho}
 \rho_{-1}(\dif x)=\frac{\dif x}{2x(1-x)},\qquad 0<x<1,
\end{equation}
under the map $x\mapsto (\max\{x,1-x\}, \min\{x,1-x\},0,\ldots) $. 
 Its generating function 
 \[  \kappa(\theta)
 =\frac12\int_0^1\frac{1-x^\theta-(1-x)^\theta}{x(1-x)}\,\dif x
 =\psi(\theta)-\psi(1),\] 
 where $\psi(\theta)=\frac{\dif}{\dif \theta} (\ln \Gamma
 (\theta) ) = \frac{\Gamma'(\theta)}{\Gamma(\theta)}$ is the digamma function. Thus the threshold  $ \theta_*\approx2.41$ is the unique solution of 
\begin{equation}\label{eq:ex-ctcs-numerics}
 \theta\psi'(\theta)=\psi(\theta)-\psi(1) \ ,  \quad  \theta > 0.  
\end{equation} 

The CTCS dynamics can also be verified directly from
\eqref{eq:finite-visible-dislocation-law}. Using $\Gamma(x+1)=x\Gamma(x)$ taking logarithms and differentiating gives $\psi(x+1)=\psi(x)+ \frac{1}{x}$. Thus $\kappa(m)=h_{m-1}$ for $m \ge 2$. 
After sampling the ranked mass partition $(s_1,s_2,0,\ldots)$ from
$\widehat\nu_m$, use an independent fair coin to choose which of
$s_1$ and $s_2$ is the left-child mass $x$. Then
\eqref{eq:finite-visible-dislocation-law} gives $x$ the probability distribution
$
 \widehat\rho_m(\dif x)
 =\frac{1-x^m-(1-x)^m}{h_{m-1}}\,
   \frac{\dif x}{2x(1-x)}$. 
Conditionally on $x$, the left sub-clade size is $\operatorname{Bin}(m,x)$ conditioned
to lie in $\{1,\ldots,m-1\}$. Thus, for $1\leq i\leq m-1$, the probability that the left sub-clade has size $i$ is 
 \[ q(m,i)
  =\int_0^1
   \frac{\binom mi x^i(1-x)^{m-i}}{1-x^m-(1-x)^m}\,
   \widehat\rho_m(\dif x) 
  =\frac{\binom mi B(i,m-i)}{2h_{m-1}}
  =\frac1{2h_{m-1}} \Bigl( \frac1i+\frac1{m-i}  \Bigr). \]
The tilt and conditioning factors cancel, recovering exactly the
CTCS split law \eqref{eq:intro-ctcs-rule}.

\medskip 
\noindent
\textbf{ Universal height for conservative binary splitting.}  
Let $\nu$ be a nonzero dislocation measure satisfying
\eqref{eq:dislocation-integrability}, supported on mass partitions
$(s_1,s_2,0,\ldots) \in \mathcal{S}^{\downarrow}_{1}$,
where $s_1+s_2=1$ and $s_1,s_2>0$. Normalize $\nu$ by replacing it
with $\nu/\kappa_\nu(2)$, so that $\kappa(2)=1$.
Then we have 
\[\theta_*\in(2,3). \]
In particular, the  height of the fragmentation tree satisfies  \[ D^*_{n} = 2\log n + O_{\P}(1). \]   No nonlattice assumption is needed here. 
To see this, applying  inequalities $-\log u>1-u$  gives
$  -2\sum_{i=1}^2s_i^2\log s_i
  >2(s_1^2s_2+s_1s_2^2)
  =2s_1s_2=1-s_1^2-s_2^2 $.
Integrating against the nonzero measure $\nu$ yields
$2\kappa'(2)>\kappa(2)$. 
Thus $\Delta(2)>0>\Delta(3)$, and the strict decrease of $\Delta$
implies $\theta_*\in(2,3)$.

\medskip  
\noindent
\textbf{The beta-splitting family.}
\label{sec:ex-binary}
Generally, for $\beta>-2$, let $\nu_\beta$ be the pushforward under
the same map $x\mapsto (\max\{x,1-x\}, \min\{x,1-x\},0,\ldots) $ of
\begin{equation}\label{eq:ex-binary-rho-beta}
 \rho_\beta(\dif x)
 =\frac12x^\beta(1-x)^\beta\,\dif x ,
 \quad 0<x<1.
\end{equation}
With this convention, $\kappa_\beta(2)=B(\beta+2,\beta+2)$, and
\begin{align}\label{eq:ex-binary-kappa-beta}
 \kappa_\beta(\theta)
 &=\frac12\int_0^1
 [1-x^\theta-(1-x)^\theta]x^\beta(1-x)^\beta\,\dif x \\
 &=\frac12[B(\beta+1,\beta+1)-2B(\theta+\beta+1,\beta+1)]
 \quad\text{if }\beta>-1.
\end{align}  
For every $\beta>-2$, the root satisfies $2<\theta_*(\beta)<3$,
by the preceding argument.
As in the CTCS calculation, the tilt in
\eqref{eq:finite-visible-dislocation-law} cancels with the paintbox
conditioning, giving Aldous' beta-splitting law
\cite{AldousCladograms}:    for $1\leq i\leq m-1$, the probability that the left sub-clade has size $i$ is
\begin{equation}\label{eq:ex-binary-split-prob}
\begin{aligned}
 q_\beta(m,i)
 &=\binom mi\frac{1}{2\kappa_\beta(m)}
   \int_0^1x^{i+\beta}(1-x)^{m-i+\beta}\,\dif x\\
 &=\binom mi \frac{B(i+\beta+1,m-i+\beta+1)}
 {2\kappa_\beta(m)},
 \qquad 1\leq i\leq m-1.
\end{aligned}
\end{equation} 
McCullagh, Pitman and Winkel \cite[Theorem~2]{McCullaghPitmanWinkel}
show that this family, together with the equal-split case $\beta=\infty$,
comprises all consistent Markovian binary fragmentation trees of Gibbs type. 
 
\medskip  
\noindent
\textbf{Poisson--Dirichlet split vectors.}
\label{sec:ex-pa}
Take   $0\leq\alpha<1$, $\gamma>-\alpha$.  Set $  \nu= \operatorname{PD}(\alpha,\gamma)$ be the Poisson--Dirichlet distribution with parameter $(\alpha,\gamma)$. These splitting rules belong to the multifurcating Gibbs family
characterized by 
\cite[Theorem~8]{McCullaghPitmanWinkel}. We have 
\[  \kappa_{\alpha,\gamma}(\theta) = 1- \frac{\Gamma(1+\gamma)\Gamma(\theta-\alpha)}
 {\Gamma(\theta+\gamma)\Gamma(1-\alpha)}. \]
Recall $\psi=\Gamma'/\Gamma$.  The threshold $\theta_*(\alpha,\gamma)$ is the unique solution in $(1,\infty)$ of
\begin{equation}\label{eq:ex-pd-root}
 1+\theta
 [\psi(\theta+\gamma)-\psi(\theta-\alpha)] =\frac
 {\Gamma(\theta+\gamma)\Gamma(1-\alpha)}{\Gamma(1+\gamma)\Gamma(\theta-\alpha)}.
\end{equation}
To see this, recall that a random mass partition
$\mathbf{S}=(S_i)\in\mathcal S_1^\downarrow$ with distribution
$\operatorname{PD}(\alpha,\gamma)$ can be constructed as the decreasing
rearrangement of the $\operatorname{GEM}(\alpha,\gamma)$ stick-breaking
sequence $(P_j)_{j\geq1}$, where
$P_j=V_j\prod_{\ell<j}(1-V_\ell)$ and the $V_j$ are independent with
$V_j\sim\operatorname{Beta}(1-\alpha,\gamma+j\alpha)$ (see 
\cite[Equation (3.8)]{PitmanCSP}). 
To calculate $\kappa(\theta)$, choose $I$ conditionally on $\mathbf S$ with
$\P(I=i\mid\mathbf S)=S_i$.   
The GEM representation orders the masses by size-biased sampling of $\mathbf{S}$ (\cite[Theorem~3.2 and Definition~3.3]{PitmanCSP}), so
$S_I$ has the law of the first GEM stick,
$\operatorname{Beta}(1-\alpha,\gamma+\alpha)$. 
Since $\E[S_I^{\theta-1}\mid\mathbf S]=\sum_iS_i^\theta$,
the beta integral gives
$
 \kappa(\theta) =1-\E\sum_iS_i^\theta
 =1-\E[ S_I^{\theta-1}]
  =1-\frac{B(\theta-\alpha,\gamma+\alpha)}
        {B(1-\alpha,\gamma+\alpha)} $, which yields the previous expression.

The subfamily $\gamma=1-\alpha$ gives interesting split vectors related with
linear preferential-attachment trees (\cite[Theorem~1.5]{JansonSplitTrees}), for which
\begin{equation}\label{eq:ex-pa-kappa-delta}
 \kappa_{\alpha}(\theta)=\frac{\theta-1}{\theta-\alpha},\qquad
 \theta_*(\alpha)=1+\sqrt{1-\alpha} \in (1,2]
\end{equation}
In particular, $\operatorname{PD}(0,1)$ gives $\theta_*=2$, so the
corresponding tree height lies in the critical regime, which satisfies $D_n^*=4\log n-\log\log n+O_{\P}(1)$.
For $\operatorname{PD}(1/2,1/2)$, we have $\theta_*=1+1/\sqrt2<2$,
so the tree height lies in the frozen regime and satisfies
$
 D_n^*=\frac{3+2\sqrt2}{2}\log n
 -\frac{3(2+\sqrt2)}{4}\log\log n+O_{\P}(1)$.

Consider a clade with label set $[m]$, and fix a partition
$\{B_1,\ldots,B_k\}$ of $[m]$, with $k\geq2$ and $m_j=|B_j|$.
Conditional on a split of $[m]$, the probability that its child clades are
exactly $B_1,\ldots,B_k$ is given by
\begin{equation}\label{eq:ex-pa-eppf-cond}
 \hat{q}_{\alpha,\gamma}(m_1,\ldots,m_k)
 =\frac{ \prod_{i=1}^{k-1}(\gamma+i\alpha)
       \prod_{j=1}^k(1-\alpha)_{m_j-1\uparrow}}
      {\kappa_{\alpha,\gamma}(m)(\gamma+1)_{m-1\uparrow}}.
\end{equation}
Here $(x)_{r\uparrow}=x(x+1)\cdots(x+r-1)$ and
$(x)_{0\uparrow}=1$.
The formula follows by conditioning the Ewens--Pitman exchangeable
partition probability function (EPPF)
\cite[Theorem~3.2]{PitmanCSP} on at least two blocks.

\medskip
\noindent
\textbf{Symmetric Dirichlet split vectors.}
\label{sec:ex-dirichlet}
Take an integer $b\geq2$ and $\alpha>0$, and set
\begin{equation}\label{eq:ex-dirichlet-vector}
 \nu=\mathrm{Law}(\mathbf S^\downarrow),\qquad
 \mathbf S=(S_1,\ldots,S_b)\sim
 \operatorname{Dirichlet}(\alpha,\ldots,\alpha).
\end{equation}
The ranked vector has distribution $\operatorname{PD}(-\alpha,b\alpha)$
and gives another member of the Gibbs family
\cite[Theorem~8]{McCullaghPitmanWinkel}.
The generating function is
\begin{equation}\label{eq:ex-dirichlet-Ap}
 \kappa_{b,\alpha}(\theta)
 =1-\frac{b\,\Gamma(b\alpha)\Gamma(\theta+\alpha)}
 {\Gamma(\alpha)\Gamma(\theta+b\alpha)},
\end{equation}
and $\theta_*(b,\alpha)$ is the unique solution in $(1,\infty)$ of
$
 1+\theta\{\psi(\theta+b\alpha)-\psi(\theta+\alpha)\}
 =\frac{\Gamma(\alpha)\Gamma(\theta+b\alpha)}
 {b\,\Gamma(b\alpha)\Gamma(\theta+\alpha)}$.  
Indeed, since  $S_1\sim\operatorname{Beta}(\alpha,(b-1)\alpha)$, exchangeability
and the beta integral give
$\E\sum_{i=1}^bS_i^\theta
=b\,B(\theta+\alpha,(b-1)\alpha)/B(\alpha,(b-1)\alpha)$.

For $\alpha=1$, $\mathbf S$  is related to the split vectors in $b$-ary search trees \cite[p.~416]{DevroyeSplitTrees}.
Then $\kappa_{b,1}(\theta)=1-b!\,\Gamma(\theta+1)/\Gamma(\theta+b)$;
for example, $\theta_*(5,1)\approx1.96<2<2.04 \approx \theta_*(4,1)$. 

Consider a clade with label set $[m]$, and fix a partition
$\{B_1,\ldots,B_k\}$ of $[m]$, with $2\leq k\leq b$ and $m_j=|B_j|$.
Conditional on a split, the probability that its child clades are
exactly $B_1,\ldots,B_k$ is
\begin{equation}\label{eq:ex-dirichlet-finite-m-law}
 \hat{q}_{b,\alpha}(m_1,\ldots,m_k)
 =\frac{b!}{(b-k)!}\,
 \frac{\prod_{j=1}^k(\alpha)_{m_j\uparrow}}
 {\kappa_{b,\alpha}(m)(b\alpha)_{m\uparrow}}.
\end{equation}
To verify this, given the paintbox $\mathbf S$, each label independently
chooses box $i$ with probability $S_i$.
Fix distinct boxes $i_1,\ldots,i_k$.
The probability that every label in $B_j$ chooses box $i_j$, for all $j$,
is $\prod_{j=1}^k S_{i_j}^{m_j}$.
Integrating against the Dirichlet density gives
\[
 \E\prod_{j=1}^k S_{i_j}^{m_j}
 =\frac{\Gamma(b\alpha)}{\Gamma(b\alpha+m)}
   \prod_{j=1}^k\frac{\Gamma(\alpha+m_j)}{\Gamma(\alpha)}
 =\frac{\prod_{j=1}^k(\alpha)_{m_j\uparrow}}{(b\alpha)_{m\uparrow}}.
\]
Summing over the $b!/(b-k)!$ assignments of the blocks
$B_1,\ldots,B_k$ to $k$ distinct boxes of the paintbox, and dividing the visible splitting probability   $1-\E\sum_{i=1}^b S_i^m=\kappa_{b,\alpha}(m)$  gives the preceding probability.

\section{Preliminaries}\label{sec:preliminaries}
This section collects the preliminary results on homogeneous fragmentations
used throughout the paper.  None of these results is new: they are standard
consequences of the theory of exchangeable coalescents or adaptations of
results from the branching random walk (BRW) literature.  For completeness,
we provide proofs of selected inputs in Appendix~\ref{sec:preliminary-input-proofs}, including
the details needed to justify the precise formulations used here.

\subsection{Homogeneous fragmentation.} \label{sec:fragmentation}
We first recall the partition and mass formulations; see
\cite[Chapter~3]{Bertoin} for the general theory.  Let $\mathcal P$ be the
space of partitions of $\N$.  
A homogeneous partition fragmentation is an exchangeable c\`adl\`ag Markov process
$\Pi=(\Pi(t))_{t\geq0}$ on $\mathcal P$, started from the trivial
partition $\{\N\}$, with the following \emph{fragmentation property}.
Conditionally on $\Pi(t)=(B_1,B_2,\ldots)$, the subsequent evolutions
inside the blocks $B_j$ are independent and, after relabelling, have the
law of independent copies of $\Pi$ started from $\{\N\}$. 
  
The law of $\Pi$ is determined by an erosion coefficient $c\geq0$ and a
dislocation measure $\nu$ on $\mathcal S^\downarrow$ satisfying
\eqref{eq:dislocation-integrability}.  Independently for each block,
dislocations are driven by a Poisson point process on
$\mathbb R_+\times\mathcal S^\downarrow$ with intensity
$\dif t\,\nu(\dif\mathbf s)$: at each atom $(t,\mathbf s)$, the
affected block is partitioned according to an $\mathbf s$-paintbox.
Thus, for any measurable $A\subset\mathcal S^\downarrow$ with
$\nu(A)<\infty$, events with $\mathbf s\in A$ occur at rate $\nu(A)$
per block.  Although $\nu$ may have infinite total mass,
\eqref{eq:dislocation-integrability} ensures that each finite restriction
has a finite rate of nontrivial dislocations.  Conditional on a
nontrivial dislocation of a block containing $m\geq2$ labels, the mass
partition $\mathbf s$ has distribution
\eqref{eq:finite-visible-dislocation-law}.
Independently, erosion separates each label from its current
nonsingleton block into a singleton at rate $c$.
  
For a block $B\subset\N$, define its asymptotic frequency by 
\[
 \operatorname{freq}(B)
 :=\lim_{n\to\infty}\frac{\#(B\cap[n])}{n},
\]  
whenever the limit exists.  
For every $t\geq0$, exchangeability of $\Pi(t)$ ensures that all of its
blocks have asymptotic frequencies almost surely.  Thus, if
$\Pi(t)=\{B_1(t),B_2(t),\ldots\}$ is any enumeration of its blocks, set   
\begin{equation}
 \label{eq:def-Xi}
 \bigl(X_i(t)\bigr)_{i\geq1}
 :=\bigl(\operatorname{freq}(B_j(t)):j\geq1\bigr)^\downarrow,
\end{equation} 
where $\downarrow$ denotes decreasing rearrangement, padded with zeros
when necessary.  The process
$((X_i(t))_{i\geq1},t\geq0)$ is called the mass fragmentation with
characteristics $(c,\nu)$.  At each fixed time, Kingman's paintbox
representation~\cite{Kingman} recovers the conditional law of $\Pi(t)$
from $(X_i(t))_{i\geq1}$.  The ranked frequencies alone, however, do not
retain the genealogical paths followed by individual labels.  For this
we use a direct interval-fragmentation realization.

\medskip 
\noindent \textbf{Interval fragmentation process.}
Let $\mathcal U$ be the space of open subsets of $(0,1)$.  Every
$U\in\mathcal U$ is the union of countably many pairwise disjoint open
intervals, called its \emph{interval components}.

A homogeneous \textit{interval fragmentation}
$\mathcal I=(\mathcal I(t))_{t\geq0}$ is a Markov process on
$\mathcal U$, continuous in probability, such that
$\mathcal I(0)=(0,1)$ and $\mathcal I(t+s)\subseteq\mathcal I(t)$ for
$s,t\geq0$.  Its fragmentation property says that, conditionally on
$\mathcal I(t)$, the future processes inside distinct interval
components are independent and, after affine rescaling onto $(0,1)$,
have the laws of independent copies of $\mathcal I$; see \cite[Definitions~1--2]{BertoinSelfSimilar}
with self-similarity index $\alpha=0$ and also
\cite[Definition~3.4]{Basdevant}.

An interval fragmentation provides a process-level version of
Kingman's paintbox representation. Labels are placed at i.i.d. uniform
points in $(0,1)$, independently of the entire interval fragmentation. At each time, labels that  lie in the same interval component form a block, while those
outside the open set form singletons.
The following representation is the homogeneous case ($\alpha=0$)
of Bertoin's construction in 
\cite{BertoinSelfSimilar}.

\begin{lemma}[{\cite[Lemmas~5--6]{BertoinSelfSimilar}}]
 \label{lem:genealogical-paintbox}
There exists a realization of $\Pi$ together with a homogeneous interval
fragmentation $\mathcal I=(\mathcal I(t))_{t\geq0}$ and a sequence
$(U_j)_{j\geq1}$ of i.i.d. uniform variables on $(0,1)$, independent of
the entire process $\mathcal I$, with the following properties.
Let $(I_i(t))_{i\geq1}$ be the interval components of
$\mathcal I(t)$ listed in nonincreasing order of length, with ties
broken by their left endpoints and empty sets appended when necessary.
Almost surely, simultaneously for every $t\geq0$,
\begin{enumerate}[(i)]
\item
$X_i(t)=|I_i(t)|$ for all $i \ge 1$; 
\item
distinct labels $j,k\in\N$ are in the same block
of $\Pi(t)$ if and only if $U_j$ and $U_k$ lie in the same interval
component of $\mathcal I(t)$; labels with $U_j\notin\mathcal I(t)$
form singleton blocks.
\end{enumerate} 
\end{lemma}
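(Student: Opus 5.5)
The plan is to build $\mathcal I$ directly from the Poissonian construction of $\Pi$, with the uniform labels attached afterwards. This is the homogeneous case of Bertoin's construction.

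\emph{Step 1: an interval-valued process.} Take the same Poisson point processes that drive the blocks, with intensity $\dif t\,\nu(\dif\mathbf s)$, together with the erosion rate $c$. Build a process on open subsets of $(0,1)$ as follows. At a dislocation atom $(t,\mathbf s)$ affecting an interval component $J$, replace $J$ by a random open subset of $J$. This subset consists of disjoint subintervals of lengths $s_i|J|$, placed in an exchangeable random order (for instance via uniform marks). The complement, of Lebesgue measure $(1-\sum_i s_i)|J|$, is removed as dust. Erosion is realized as a deterministic exponential shrinking of each component, at rate $c$, towards a removed closed set. Because \eqref{eq:dislocation-integrability} may allow $\nu$ to have infinite mass, I would first construct the process with $\nu$ restricted to $\{s_1<1-\varepsilon\}$, where the construction is an explicit finite-rate recursion. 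Then let $\varepsilon\downarrow0$ and use a projective/monotone limit. The construction is consistent across $\varepsilon$, and $\int(1-s_1)\,\dif\nu<\infty$ makes the total removed length converge. The Markov and fragmentation properties after affine rescaling are inherited from the independence of the Poisson processes attached to distinct components.

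\emph{Step 2: sampling labels.} Take i.i.d.\ uniforms $U_j$, independent of $\mathcal I$, and define $\Pi(t)$ by property (ii). The task is to check that this is a homogeneous fragmentation with characteristics $(c,\nu)$. Exchangeability is immediate. For the fragmentation property, condition on $\mathcal I(t)$: the labels falling in a component $J$ are, after rescaling, again i.i.d.\ uniform on $(0,1)$ and independent of the future of $J$. Hence the future of that block is an independent copy of the construction.

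\emph{Step 3: identifying the jump rates.} It remains to show that the rates agree with $(c,\nu)$ on each restriction $[n]$. Consider a block of $m$ labels in component $J$ when an atom $\mathbf s$ occurs. The labels, being uniform in $J$, fall into the subintervals with probabilities $s_i$, or into the dust. This is exactly the $\mathbf s$-paintbox, so the visible-split rate is $\int(1-\sum s_i^m)\,\dif\nu=\kappa_\nu(m)$. Under erosion, each label exits at rate $c$. By uniqueness of the characteristics (Bertoin, Chapter~3), the resulting process has the law of $\Pi$. Since both $\Pi$ and $\mathcal I$ are then functions of the same randomness, the realization holds simultaneously for all $t$ almost surely. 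Property (ii) holds by construction. Property (i) follows from the strong law of large numbers applied to the $U_j$ inside each component, for all rational $t$ at once. It then extends to all $t$ by right-continuity of the ranked lengths and of the frequencies, since both processes are càdlàg and jump at the same times.

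\emph{Main obstacle.} The delicate point is the limit $\varepsilon\downarrow0$ when $\nu$ is infinite. There one must verify that the lengths converge, that the interval process is continuous in probability, and that (ii) holds \emph{simultaneously} for every $t$ rather than only for fixed $t$. My first attempt would be to restrict to $[n]$: only finitely many visible events occur on $[n]$ in bounded time, and the $U_j$ almost surely avoid the countably many interval endpoints. Block membership is therefore determined by finitely many events and is stable in $\varepsilon$.
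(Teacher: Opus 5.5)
The paper gives no proof of this lemma; it imports the statement from Bertoin's interval representation of self-similar fragmentations (case $\alpha=0$). Your interval-first route is legitimate, and for a finite dislocation measure it essentially works. This covers in particular each truncation $\nu_\varepsilon:=\nu|_{\{s_1<1-\varepsilon\}}$, which is finite by \eqref{eq:dislocation-integrability}. Shrinking each component towards its left endpoint at rate $c$ makes uniformly placed labels exit at i.i.d.\ $\mathrm{Exp}(c)$ times, uniform labels realize the $\mathbf s$-paintbox, and the rates on $[n]$ identify $(c,\nu)$.

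The genuine gap is the passage $\varepsilon\downarrow0$. This case is not marginal: the motivating $\CTCS$ measure $\rho_{-1}$ has infinite mass. The processes $\mathcal I^{\varepsilon}$ driven by the same Poisson atoms are neither nested nor monotone in $\varepsilon$. Inserting a dislocation with $s_1\geq1-\varepsilon$ into a component $J$ replaces $J$ by a rearranged family of subintervals. Every later cut inside $J$ is then made in different coordinates, so the component containing a fixed $U_j$ at later times changes.

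Your rescue on $[n]$ fails for the same reason. Only finitely many events are visible on $[n]$, but which events are visible, and how they split the labels, depends on the positions of the $U_j$ inside their component. Those positions are moved by the infinitely many invisible events. Only the \emph{law} of $\Pi^{\varepsilon}|_{[n]}$ is stable in $\varepsilon$. Convergence in law of the sampled partitions does not by itself produce a limiting interval process. It certainly does not produce one satisfying (ii) pathwise for all $t$, with the Markov and fragmentation properties and continuity in probability. So the existence of an interval fragmentation with infinite $\nu$, which is the substantive part of the lemma, is assumed rather than proved.

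One way to close the gap is a genuinely convergent coupling:
\begin{itemize}
\item let the largest child inherit its parent's Poisson clock;
\item index the other children by the atom that creates them;
\item place the largest child flush with the left end of its parent.
\end{itemize}
Each extra atom then displaces the points of its component by at most $1-s_1$. A Cauchy-in-probability argument driven by $\int_{\{s_1\geq1-\varepsilon\}}(1-s_1)\,\nu(\dif\mathbf s)\to0$ then becomes available. You would still have to check the Markov and fragmentation properties of the limit. Alternatively, reverse the direction: start from the partition-valued $\Pi$, whose Poissonian construction handles infinite $\nu$ through finite restrictions, and build $\mathcal I$ and the $U_j$ from the nested family of its blocks.

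A smaller point concerns (i) holding simultaneously for all $t$. Extending from rational $t$ by right-continuity presupposes that the blocks of $\Pi(t)$ have asymptotic frequencies at irrational $t$. Also, the two processes do not merely jump at common times: erosion decreases lengths continuously, and for infinite $\nu$ the jump times are dense. Use Glivenko--Cantelli instead. Almost surely $(U_k)$ is dense in $(0,1)$ and $\sup_{0\le a<b\le 1}\bigl|\tfrac1n\#\{k\le n:U_k\in(a,b)\}-(b-a)\bigr|\to0$, and $(U_k)$ is independent of $\mathcal I$. Hence, simultaneously for all $t$, every component $J$ of $\mathcal I(t)$ carries a block of frequency exactly $|J|$, while labels in the dust are singletons. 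This gives (i) with no regularity argument.
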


We henceforth work with this coupling. 
Lemma~\ref{lem:genealogical-paintbox} gives the following identity.

\begin{corollary}
For every set $A\subset\N$ of $q\geq2$ labels, 
\begin{equation}\label{eq:conditional-unseparated-probability}
 \P\bigl[
  A\text{ is contained in one block of }\Pi(t)
  \mid\mathcal I
 \bigr]
 =\sum_{i\geq1}X_i(t)^q.
\end{equation} 
\end{corollary}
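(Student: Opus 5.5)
The plan is to condition on the whole interval fragmentation $\mathcal I$ and use part (ii) of Lemma~\ref{lem:genealogical-paintbox}. Write $A=\{j_1,\dots,j_q\}$. By that lemma, almost surely $A$ is contained in one block of $\Pi(t)$ exactly when all of $U_{j_1},\dots,U_{j_q}$ lie in the same interval component of $\mathcal I(t)$. The one thing to check is that a label outside $\mathcal I(t)$ forms a singleton, so with $q\geq2$ it cannot share a block with the others. The event is therefore the disjoint union over $i\geq1$ of the events $\{U_{j_1},\dots,U_{j_q}\in I_i(t)\}$. Disjointness holds because the components are pairwise disjoint, and the empty padded sets contribute nothing.

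Next I compute the conditional probability. The $U_j$ are i.i.d.\ uniform on $(0,1)$ and independent of $\mathcal I$. Since $I_i(t)$ is $\sigma(\mathcal I)$-measurable (it is defined from $\mathcal I(t)$ through a measurable ordering rule, with ties broken by left endpoints), we get
\[
\P\bigl[U_{j_1},\dots,U_{j_q}\in I_i(t)\mid\mathcal I\bigr]=|I_i(t)|^q .
\]
Summing over $i$ by countable additivity of conditional probability gives $\sum_{i\geq1}|I_i(t)|^q$. By part (i) of the lemma, $|I_i(t)|=X_i(t)$, which yields \eqref{eq:conditional-unseparated-probability}.

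The only step needing care is measurability. Specifically, the indexing $(I_i(t))_{i\geq1}$ must be a measurable function of $\mathcal I(t)$, so that the conditional-independence computation is justified. Also, the lemma's identities hold almost surely, which is enough, because modifying the event on a null set does not change the conditional probability.
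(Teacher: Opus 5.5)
Your proposal is correct and follows the paper's proof essentially line by line: the paper conditions on $\mathcal I$, gets $|I_i(t)|^q=X_i(t)^q$ for each component, notes that these events are disjoint and that labels outside $\mathcal I(t)$ are singletons irrelevant for $q\geq2$, and sums over $i$. Your remarks on the measurability of the ordering $(I_i(t))_{i\geq1}$ and on the almost-sure identities are harmless extra care that the paper leaves implicit.
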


\begin{proof}
For every $i\geq1$, conditional on $\mathcal I$,
\[
 \P\bigl[
  U_j\in I_i(t)\text{ for every }j\in A
  \mid\mathcal I
 \bigr]
 =|I_i(t)|^q=X_i(t)^q.
\]
These events are disjoint across interval components.  Labels outside
$\mathcal I(t)$ form singletons and do not contribute since $q\geq2$.
Summing over $i$ proves the result.
\end{proof}
 
\subsection{Martingales and extremal fragments} We begin with introducing some notation. Recall  \eqref{eq:def-Xi}.
For every $\theta>1$, set
\begin{equation}\label{eq:kappa-Sp}
 S_{\theta}(t):=\sum_{i\geq1}X_i(t)^\theta,
 \quad
 W_{\theta}(t):=e^{t\kappa(\theta)}S_{\theta}(t),
 \quad t\geq0.
\end{equation} 
The Laplace exponent formula of the tagged fragment \cite[Theorem 3.2]{Bertoin} gives
\begin{equation}\label{eq:exponent-S}
   \E[ S_{\theta}(t)]=  e^{- t \kappa(\theta)} \quad \text{ and hence } \quad  \E [W_\theta(t)] = 1 .  
\end{equation} 
Recall that $\theta_*$ is the unique root of
$\Delta(\theta):=\theta\kappa'(\theta)-\kappa(\theta)$ and that
$\kappa_*:=\kappa(\theta_*)$. Define
\begin{equation}\label{eq:boundary-position}
 V_i(t):=\theta_*\log\frac1{X_i(t)}-\kappa_* \, t \ ,
 \quad
 Z(t) :=\sum_{i:X_i(t)>0}V_i(t)e^{-V_i(t)}.
\end{equation} 
Note that
$Z(t)=-\theta_*\frac{\partial}{\partial\theta}
W_\theta(t)|_{\theta=\theta_*}$. Finally, we introduce 
the extremal process $\mathcal{E}^{V}_t$, which records all fragments near the largest one:
\begin{equation}\label{eq:extremal-process-definition}
  \mathcal{E}^{V}_t:=\sum_{i:X_i(t)>0}
  \delta_{\frac{3}{2}\log t-V_i(t)},
  \qquad t>0.
\end{equation}
For $\beta>1$, we  put $\exp_{\beta} : x \mapsto \exp( \beta x) $ the exponential function.

\begin{lemma}\label{lem:brw-inputs}
Under assumptions \eqref{eq:dislocation-integrability},
\eqref{eq:entropy-dominance}, and \eqref{eq:no-sudden-extinction}, the
following assertions hold.
\begin{enumerate}[(i)]
\item\label{prop:martingale} For every $\theta>1$,
$(W_\theta(t))_{t\geq0}$ is a nonnegative martingale.  Let $W_\theta$ be its almost sure limit. Then  for any 
$ \theta \in (1,\theta_*)$, there exists $p=p_\theta>1$ such that   
\[ W_\theta(t)\xrightarrow[t \to \infty]{L^p} W_{\theta} \ ; \ \text{ and } \   \P(W_{\theta}>0)=1 \ , \  \E[W_{\theta}]=1. \]
\item\label{input:derivative} $(Z(t))_{t\geq0}$ is a (not necessarily
nonnegative) martingale, and
\[
 Z(t) \xrightarrow[t \to \infty]{\text{a.s.}}Z \in(0,\infty)
  \, .
\]
\item\label{input:SH} At critical value $\theta=\theta_*$, we have 
\begin{equation}
 \sqrt t\,W_{\theta_*}(t) \xrightarrow[t \to \infty]{\P}
 \sqrt{\frac{2}{\pi\sigma_*^2}}\,\frac{Z}{\theta_*} \,.
 \label{eq:SH-scaling}
\end{equation} 
\item\label{input:MinimumBRW} Let $v_*:=\kappa_*/\theta_*=\kappa'(\theta_*)$. As $t\to\infty$,
\begin{equation}\label{eq:largest-fragment-input}
 \frac{\log (1/X_1(t)) - \frac{\kappa_{*}}{\theta_{*}} t}{\log t}
\xrightarrow[t \to \infty]{\P}  \frac{3}{2\theta_*} \,.
\end{equation}
\item\label{input:Madaule}
Assume \eqref{non-lattice-cond}. There exists a point process
$\mathcal E^V$ on $(-\infty,\infty]$ with the following property. 
Fix $k\in\mathbb Z_{\geq1}$. For each $1\leq j\leq k$, let
$\beta_j>1$, $g_j\in\mathrm{BUC}(\mathbb R)$\footnote{Here,
$\mathrm{BUC}(\mathbb R)$ denotes the space of bounded uniformly
continuous functions on $\mathbb R$.}, and
$(g_{j,t})_{t>0}\subset\mathrm{BUC}(\mathbb R)$ satisfy
$
 \|g_{j,t}-g_j\|_\infty\xrightarrow{t\to\infty}0$.
Set
$f_{j,t}(x):=g_{j,t}(x)\exp_{\beta_j}(x)$ and
$f_j(x):=g_j(x)\exp_{\beta_j}(x)$. Then
\begin{equation}\label{eq:weighted-extremal-convergence}
 \Bigl(
 \mathcal E_t^V,Z(t),
 \bigl(\langle f_{j,t},\mathcal E_t^V\rangle\bigr)_{j=1}^k
 \Bigr)
 \xrightarrow[t\to\infty]{\mathrm{law}}
 \Bigl(
 \mathcal E^V,Z,
 \bigl(\langle f_j,\mathcal E^V\rangle\bigr)_{j=1}^k
 \Bigr)
\end{equation}
in $\Mloc((-\infty,\infty])\times\mathbb R^{k+1}$. 
The joint law of  the limit $(\mathcal E^V,Z)$ has the following description. There
exist a constant $C_V>0$ and a point process $\mathcal D$ on
$(-\infty,0]$  whose rightmost atom is zero
almost surely, such that,  
\[
\mathcal{L}( \mathcal E^V \mid Z)  
=\DPPP\bigl(C_VZe^{-x}\,\dif x,\mathcal D\bigr).
\]
Moreover, for every $\beta>1$, both
$\langle\exp_\beta,\mathcal E^V\rangle$ and
$\langle\exp_\beta,\mathcal D\rangle$ are finite almost surely. 
\end{enumerate}
\end{lemma}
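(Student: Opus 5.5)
The plan is to embed the mass fragmentation into a continuous-time branching random walk and import the standard BRW inputs item by item. For a mesh $h>0$, the skeleton $(\sum_i\delta_{\log(1/X_i(nh))})_{n\geq0}$ is a supercritical BRW. This follows from the fragmentation property of the interval fragmentation of Lemma~\ref{lem:genealogical-paintbox}, and the log-Laplace transform $\E[S_\theta(h)]=e^{-h\kappa(\theta)}$ comes from \eqref{eq:exponent-S}. The no-sudden-extinction condition \eqref{eq:no-sudden-extinction} rules out extinction, since a fragment can only disappear through erosion, which acts continuously. The entropy-dominance condition \eqref{eq:entropy-dominance} gives $\Delta(\theta)>0$ on $(1,\theta_*)$ and $\Delta(\theta_*)=0$. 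These are exactly the boundary-case normalisation hypotheses of the BRW literature. Each item is then proved first for the skeleton and extended to continuous time by a monotonicity/interpolation argument. The interpolation works because $t\mapsto X_1(t)$ is nonincreasing, and because $W_\theta(t)$ and $Z(t)$ are genuine continuous-time martingales, so Doob's inequality controls the fluctuations between grid points.

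\textbf{Item (i).} The martingale property follows from the fragmentation property and \eqref{eq:exponent-S} applied inside each block. For $L^p$ convergence when $1<\theta<\theta_*$, I would use Biggins' theorem. Strict convexity of $p\mapsto p\kappa(\theta)-\kappa(p\theta)$ near $p=1$, together with $\Delta(\theta)>0$, gives some $p\in(1,2]$ with $\kappa(p\theta)>p\kappa(\theta)$. The remaining requirement is the integrability $\E[W_\theta(h)^p]<\infty$. I would bound it via a Burkholder--Davis--Gundy inequality for the jump martingale: its compensator is expressed through $\int(\sum_i s_i^\theta)^p\nu(\dif \mathbf s)$, and on the part of $\nu$ where $s_1$ is near $1$ this behaves like $\int(1-s_1)\nu$. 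Uniform integrability then gives $\E W_\theta=1$. Positivity $\P(W_\theta>0)=1$ follows because $\{W_\theta=0\}$ is a fixed point of the branching inheritance, so its probability equals the extinction probability, which is $0$.

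\textbf{Items (ii)--(v).} After the affine change $V_i=\theta_*\log(1/X_i)-\kappa_*t$, the BRW is in the boundary case: $\E\sum e^{-V}=1$ and $\E\sum Ve^{-V}=0$. Item (ii) is then the derivative martingale convergence to a positive limit, due to Biggins--Kyprianou and Aïdékon. This needs the boundary-case integrability conditions (finite variance $\sigma_*^2$, plus an $L\log L$-type moment), and I would verify them from the smoothness of $\kappa_\nu$ near $\theta_*>1$. Item (iii) is the Seneta--Heyde norming of Aïdékon--Shi. Item (iv) is the Hu--Shi/Aïdékon tightness of $\min V-\frac32\log t$. Item (v) is Madaule's convergence of the extremal process to a decorated PPP, and it uses \eqref{non-lattice-cond}. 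The weighted functionals $\langle f_{j,t},\mathcal E^V_t\rangle$ with $\beta_j>1$ are handled by truncation. Vague convergence handles the atoms above $-K$. The mass below $-K$ is controlled uniformly in $t$ by the first-moment bound $\E[\sum_i e^{\beta(\frac32\log t-V_i)}\ind{\cdot<-K}]$. This bound follows from a ballot-type (many-to-one with a barrier) estimate conditioned on $\min V\geq -\frac32\log t - L$, which holds with high probability, and it yields $e^{-(\beta-1)K}$ decay. Joint convergence with $Z(t)$ follows because $Z(t)-Z(s)\to0$ while the extremal process at time $t$ is asymptotically independent of $\mathcal F_s$ given $Z(s)$.

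\textbf{Main obstacle.} The hardest part is passing from discrete skeletons to continuous time in item (v), and checking Madaule's hypotheses under only \eqref{eq:dislocation-integrability}. Infinitely many small dislocations within $[nh,(n+1)h]$ can shift the positions $V_i$. I would first try to show that $\sup_{s\leq h}|V_i(t+s)-V_i(t)|$ is small with high probability for the $O(1)$ leading particles, using the monotonicity of $X_i$ and a Chebyshev bound driven by $\int(1-s_1)\nu$. If that fails, I would fall back on Dadoun's results on extremes in growth-fragmentation, which treat the continuous-time setting directly.
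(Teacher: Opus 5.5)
Your architecture is the paper's: a boundary-case $h$-skeleton, discrete-time BRW inputs, and interpolation via monotonicity of $S_\theta$ and $X_1$. Items (i)--(iii) work essentially as you sketch.

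\textbf{Main gap.} Items (iv) and (v) both rest on one quantitative input that you do not supply: tightness, uniformly in $t$, of $\langle\exp_\beta,\mathcal E^V_t\rangle=t^{3\beta/2}\sum_i e^{-\beta V_i(t)}$ for $\beta>1$. Your substitute is a many-to-one bound on the event that all ancestral paths stay above a fixed level $-L$, and it cannot give this. For a centred walk staying above $-L$ and ending near $\frac32\log t+K+y$, the ballot estimate is of order $(1+L)(1+L+\frac32\log t+K+y)\,t^{-3/2}$. Your truncated first moment is therefore of order $(1+L)e^{-(\beta-1)K}(\log t+K+L)$, which is not uniform in $t$. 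Removing that $\log t$ needs a barrier rising to about $\frac32\log t$ near the terminal time, plus a proof that it is not crossed; that proof is essentially the tightness you are after.

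The paper imports this from Madaule's Theorem~2.3: on the skeleton, $n^{3\beta/2}P_\beta(n)/Z^\beta$ converges in law. This tightness does several jobs:
\begin{itemize}
\item it stops atoms of $\mathcal E^V_{nh}$ from escaping to $+\infty$;
\item it lets the weighted functionals pass to the limit;
\item it gives $\langle\exp_\beta,\mathcal E^V\rangle<\infty$, hence $\langle\exp_\beta,\mathcal D\rangle<\infty$;
\item it makes routine the continuous-time step you leave unresolved.
\end{itemize}
For that last step, the paper applies the fragmentation property at the grid time $\ell=m^{-1}\lfloor mt\rfloor$. This bounds the conditional mean of $|\langle f,\mathcal E^V_t\rangle-\langle f,\mathcal E^V_\ell\rangle|$ by a small-time modulus times $\langle\exp_\beta,\mathcal E^V_\ell\rangle$. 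It also uses that the $1/m$-skeleton limits do not depend on $m$.

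\textbf{Item (iv).} The same input is what makes (iv) available in the required generality. Item (iv) must hold without $(\mathrm A^{\mathrm{nl}}_\nu)$, because it is used at $q=\theta_*$ in Lemma~\ref{lem:point-process-inputs}. Neither result you cite covers this:
\begin{itemize}
\item Hu--Shi assume offspring moment conditions, such as $\E\sum_{|u|=1}e^{\delta V(u)}<\infty$ and a $(1+\delta)$-moment of the number of children. These fail whenever the skeleton has infinitely many fragments, e.g.\ for the critical beta-splitting measure.
\item A\"id\'ekon's $O(1)$ result is proved under a nonlattice hypothesis.
\end{itemize}
The paper instead uses the sandwich $e^{-\beta V_1(n)}\leq P_\beta(n)\leq e^{-(\beta-1)V_1(n)}W_{\theta_*}(n)$, combines it with Madaule's Theorem~2.3 and Seneta--Heyde, and lets $\beta\to\infty$.

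\textbf{Nonlattice transfer.} You never check that $(\mathrm A^{\mathrm{nl}}_\nu)$ makes the skeleton's offspring process nonlattice, which Madaule's extremal theorem requires. The paper does this through the $e^{-x}$-size-biased offspring. This is an affine image of the tagged-fragment subordinator $\xi_h$, whose L\'evy measure $\int\sum_i s_i\delta_{\log(1/s_i)}\,\nu(\dif\mathbf s)$ is exactly what $(\mathrm A^{\mathrm{nl}}_\nu)$ constrains.

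\textbf{Minor slips.}
\begin{itemize}
\item The $L\log L$ conditions in (ii)--(iii) follow from the deterministic bounds $\sum_{|u|=1}e^{-V}\leq e^{h\kappa_*}$ and $\sum_{|u|=1}V^+e^{-V}\leq\theta_*e^{h\kappa_*}/(e(\theta_*-1))$, not from smoothness of $\kappa$.
\item The BDG jump term is $\int(1-\sum_is_i^\theta)^p\,\nu(\dif\mathbf s)$, not $\int(\sum_is_i^\theta)^p\,\nu(\dif\mathbf s)$.
\item Erosion never removes a fragment. The no-leaf property of the skeleton needs the subordinator description of $X_1$.
\end{itemize}
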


The five assertions in Lemma~\ref{lem:brw-inputs} collect standard inputs
from the fragmentation and BRW literature.  The $L^p$ criterion used in
part~(\ref{prop:martingale}) is given for finite offspring in
\cite{LiuCascades} and for possibly infinite offspring in
\cite{AlsmeyerIksanovPolotskiyRosler}; the fragmentation-martingale and
derivative-martingale results originate from \cite{BertoinAsymptotic} and \cite{BertoinRouault}.
Alternatively, the logarithmic fragment sizes form a branching L\'evy process
in the sense of \cite{BertoinMalleinBLP}, and parts~\textup{(i)}--\textup{(ii)}
can be deduced directly from \cite[Proposition~1.4]{BertoinMalleinBiggins}
and \cite[Theorem~2.3]{MalleinShiDerivative}, respectively.  The
Seneta--Heyde scaling in
part~(\ref{input:SH}) and the $3/2$ minimum estimate in
part~\textup{(iv)} were proved for homogeneous fragmentations in
\cite{KyprianouMadaule} and \cite{KyprianouLaneMorters}, respectively;
their BRW counterparts appear in \cite{AidekonShi,A_d_kon_2013}.  The
BRW extremal-process convergence used in part~\textup{(v)} is due to
\cite{MadauleTip}.  To keep the proofs of all five assertions in a common
framework, however, Appendix~\ref{sec:brw-proofs} uses a unified
time-discretization argument, following \cite{BertoinRouault}; this also
avoids relying on continuous-time results whose assumptions do not always
cover the erosion and dissipative cases allowed here.

\subsection{Point processes and Poisson approximation}
\label{eq:point-process}  
We collect results on point processes  used below: Kallenberg's convergence
criterion, the Poisson cluster representation of exp-$1$-stable point processes,
and a Chen--Stein bound for Poisson approximation.

Let   $\Mloc(E)$ denote
the space of locally finite point measures on a locally compact Polish space $E$, that is, measures of
the form
$
 \mu=\sum_{i\in I}\delta_{x_i},  x_i\in E $
such that $\mu(K)<\infty$ for every compact set $K\subset E$. We say such a measure simple if   $x_i \neq x_j$ for all $i \neq j$.  
Equipped with the vague topology, $\Mloc(E)$ is a Polish space (
\cite[Theorem~4.2]{KallenbergRandomMeasures}).  Recall that this topology is characterized by
$\mu_n\to\mu$ vaguely if and only if
$\int_E f\dif \mu_n\to\int_E f\dif \mu$
for every $f\in C_{\mathrm{c}}(E)$.  Moreover, if $\mu_n\to\mu$ vaguely and $B$ is a relatively
compact Borel set with $\mu(\partial B)=0$, then, writing
$\mu \res{B} =\sum_{j=1}^m\delta_{x_j}$ with multiplicities,
for all sufficiently large $n$ we may write
$\mu_n \res{B} =\sum_{j=1}^m\delta_{x_{n,j}}$ so that
$x_{n,j}\to x_j$ for every $1\le j\le m$. 
 
Let $\mathcal M_{\mathrm{fp}} (E)$
be the space of finite point measures on $E$, endowed with the weak
topology. That is,   
$\mu_n\to\mu$ weakly if and only if
$\int_E f\dif \mu_n\to\int_E f\dif \mu$
for every $f\in C_{\mathrm{b}}(E)$. It is also a Polish space.

A \emph{point process} on $E$ is a random element of
$\mathcal M_{\mathrm p}(E)$, equipped with the Borel
$\sigma$-field induced by the vague topology.
The following criterion of Kallenberg characterizes weak convergence
to a simple point process via the probabilities of finding
no  or at least two points in suitable sets.

\begin{lemma}[
{\cite[Theorem~4.15]{KallenbergRandomMeasures}}]
\label{lem:kallenberg-criterion}
Let $\mathcal{E}_n$ and $\mathcal{E}$ be  point processes on $E$, and assume that
$\mathcal{E}$ is simple  almost surely.   Let $\mathscr{R}$ be a dissecting ring of
relatively compact Borel sets \footnote{Here dissecting means
that every relatively compact Borel set can be covered by finitely many sets from the ring and every open set is a countable union of such sets.} such that
$\mathcal{E}(\partial R)=0$ almost surely for every $R\in\mathscr{R}$. Then
\[
 \mathcal{E}_n \xrightarrow[n \to \infty]{\mathrm{law}} \mathcal{E}
 \quad\text{in }\Mloc(E)
\]
if and only if for every $R \in \mathscr{R}$,  
\[ \lim_{n\to\infty}\P(\mathcal{E}_n(R)=0)
 = \P(\mathcal{E}(R)=0) \ , \quad 
 \limsup_{n\to\infty}\P(\mathcal{E}_n(R) \ge 2)
 \leq \P(\mathcal{E}(R) \ge 2) .\] 
\end{lemma}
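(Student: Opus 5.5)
The plan is to prove both directions by reducing to two classical facts. The first is the portmanteau-type continuity of the maps $\mu\mapsto\mu(R)$ at measures that do not charge $\partial R$. The second is Rényi's theorem: the law of a \emph{simple} point process is determined by its avoidance function $R\mapsto\P(\mathcal E(R)=0)$ on a dissecting ring.

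\emph{Necessity.} For $R\in\mathscr R$ with $\mathcal E(\partial R)=0$ a.s., the map $\mu\mapsto\mu(R)$ is continuous, in the vague topology, at $\mathcal E$-almost every $\mu$. This follows from the relabelling property of vague convergence recalled above: the atoms of $\mu_n$ in $R$ eventually match those of $\mu$. The continuous mapping theorem then gives $\mathcal E_n(R)\to\mathcal E(R)$ in law in $\mathbb Z_{\ge0}$. Hence both $\P(\mathcal E_n(R)=0)$ and $\P(\mathcal E_n(R)\ge2)$ converge, and the two displayed conditions hold, even with equality.

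\emph{Sufficiency, step 1: tightness.} By the vague-topology characterisation, it suffices to show that $(\mathcal E_n(R))_n$ is tight for each $R\in\mathscr R$. Since $\mathscr R$ is dissecting, fix a finite partition $R=\bigcup_{j\le m}R_j$ with $R_j\in\mathscr R$. Then
\[
\{\mathcal E_n(R)>m\}\subset\bigcup_j\{\mathcal E_n(R_j)\ge2\},
\qquad\text{so}\qquad
\limsup_n\P(\mathcal E_n(R)>m)\le\sum_j\P(\mathcal E(R_j)\ge2).
\]
Choose a null-array of dissections of $R$, with mesh tending to zero. Simplicity of $\mathcal E$ and finiteness of $\mathcal E(R)$ give $\sum_j\P(\mathcal E(R_j)\ge2)\to0$ by dominated convergence. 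This yields tightness, and hence relative compactness of the laws of $\mathcal E_n$.

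\emph{Step 2: identifying the limit.} Let $\eta$ be a subsequential limit. Using the ring structure, approximate each $R$ from inside and outside by ring sets whose boundaries are $\eta$-null; only countably many levels can carry atoms, so this is possible. The portmanteau argument of the necessity part then transfers the limit:
\[
\P(\eta(R)=0)=\lim\P(\mathcal E_n(R)=0)=\P(\mathcal E(R)=0).
\]
Applying the same dissection bound to $\eta$ gives $\P(\eta(R)\ge2 \text{ within some } R_j)\le\sum_j\P(\mathcal E(R_j)\ge2)\to0$. Hence $\eta$ is simple. Rényi's uniqueness theorem then gives $\eta\overset{\mathrm{law}}=\mathcal E$, so the whole sequence converges.

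The main obstacle is Step 2's boundary bookkeeping: we only know $\mathcal E(\partial R)=0$, not $\eta(\partial R)=0$. I would handle it by sandwiching $R$ between ring sets $R'\subset R\subset R''$ with $\mathcal E(R''\setminus R')$ small in probability and $\eta(\partial R')=\eta(\partial R'')=0$. Monotonicity of the avoidance events then gives matching upper and lower bounds.
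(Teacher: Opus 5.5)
Your necessity argument is fine, and the architecture of the sufficiency part (tightness, then identification of subsequential limits via R\'enyi's theorem) is the standard one; the paper itself gives no proof and only cites Kallenberg. The genuine gap is the claim in Step~1 that $\sum_j\P(\mathcal E(R_j)\ge2)\to0$ ``by dominated convergence''.

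Pointwise, $\sum_j\ind{\mathcal E(R_j)\ge2}\to0$ for simple $\mathcal E$. But the only dominating variable is $\mathcal E(R)$, which is a.s.\ finite yet need not be integrable, and the lemma assumes no moments. Without $\E[\mathcal E(R)]<\infty$ the claim is false. Let $\mathcal E$ consist of $K$ i.i.d.\ uniform points in $(0,1]$ with $\P(K\ge k)\asymp k^{-1/2}$; it is simple and charges no fixed point. Yet a cell of Lebesgue measure $\ell\le\delta\le\frac14$ satisfies
\[
\P(\mathcal E(R_j)\ge2)\ge c\,\P(K\ge1/\ell)\gtrsim\ell^{1/2}\ge\ell\,\delta^{-1/2},
\]
so every partition of $(0,1]$ of mesh at most $\delta$ has $\sum_j\P(\mathcal E(R_j)\ge2)\gtrsim\delta^{-1/2}$. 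Hence your bound $\limsup_n\P(\mathcal E_n(R)>m)\le\sum_j\P(\mathcal E(R_j)\ge2)$ gives no tightness. The simplicity of $\eta$ in Step~2 reuses the same sum, so it is not established either. A smaller point: an abstract dissecting ring need not admit partitions of small mesh. Nested partitions into ring sets that eventually separate any two points always exist, and they are all you need.

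The fix is to bound the probability of the union $B_n:=\bigcup_{j\le m}\{\mathcal E_n(R_j)\ge2\}$ directly, using the hypotheses on unions of cells. Put $O_n:=\{j:\mathcal E_n(R_j)\ge1\}$, $O:=\{j:\mathcal E(R_j)\ge1\}$ and $V_J:=\bigcup_{j\in J}R_j\in\mathscr R$. By the avoidance hypothesis and inclusion--exclusion, the law of $O_n$ converges to that of $O$. Write $\{\mathcal E_n(V_J)\ge2\}$ as the disjoint union of $\{|O_n\cap J|\ge2\}$ and $\{|O_n\cap J|=1,\ \mathcal E_n(V_J)\ge2\}$, and apply the $\limsup$ hypothesis to $V_J$. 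This gives
\[
\limsup_{n}\P\bigl(|O_n\cap J|=1,\ \mathcal E_n(V_J)\ge2\bigr)\le\P\bigl(|O\cap J|=1,\ \mathcal E(V_J)\ge2\bigr).
\]
On $B_n\cap\{|O_n|\le K\}$, if $\mathcal E_n(R_{j_0})\ge2$ then at least $2^{m-K}$ sets $J$ satisfy $J\cap O_n=\{j_0\}$. Averaging over all $2^m$ subsets $J$ therefore yields
\[
\limsup_{n}\P(B_n)\le\P(\mathcal E(R)>K)+2^K\,\P\Bigl(\,\bigcup_{j\le m}\{\mathcal E(R_j)\ge2\}\Bigr).
\]
The last probability now does tend to $0$ along point-separating partitions, by bounded convergence. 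Choosing $K$ first and then the partition gives tightness. The same inequality, passed to $\eta$ via your boundary sandwich, gives simplicity of $\eta$.

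Alternatively, consider the only use the paper makes of the lemma, in Proposition~\ref{prop:point-process}. There the limit is, given $\mathcal W_q$, a Poisson process with finite intensity on each $R$. So $\E[\mathcal E(R)]<\infty$ holds, and your dominated-convergence step is valid as written.
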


For the applications below, we take $E=(-\infty,+\infty]$  and  equip it with its usual
topology, such that, $x\mapsto e^{-x}$, with
$e^{-(+\infty)}:=0$, is a homeomorphism from $(-\infty,+\infty]$ onto
$[0,\infty)$.  

Let  $\mathscr{R}_{(-\infty,\infty]}$ be the ring of finite disjoint unions of sets in
\[ 
 \{\varnothing\}\cup\bigl\{(a,b]:a\in\mathbb Q,\ 
 b\in\mathbb Q\cup\{+\infty\},\ a<b\bigr\},
\]
  Then  $\mathscr{R}_{(-\infty,\infty]}$ is a
dissecting  ring of relatively compact Borel
subsets of $(-\infty,+\infty]$. Thus, provided that the limiting point process is almost surely
simple and has no points at any rational location almost surely,
Kallenberg's criterion applies with
$\mathscr R=\mathscr{R}_{(-\infty,+\infty]}$.

The following lemma is a direct consequence of vague convergence.
Its proof, for completeness, is included in  Appendix~\ref{sec:pp-to-count-proof}.

\begin{lemma}\label{lem:pp-to-count}
If 
  $\mu_n\to\mu$ vaguely in $\Mloc((-\infty,\infty])$,  where $\mu$ is simple and has no atom
at $+\infty$, then 
\begin{equation}
    (\mu_n((x,\infty]))_{x \in \mathbb{R}} \xrightarrow{n \to \infty} (\mu((x,\infty]))_{x \in \mathbb{R}}  \label{eq:pp-to-count}
\end{equation}   in  $ D(\mathbb{R},\mathbb Z_{\geq0})$ equipped with the Skorokhod
$J_1$ topology.
\end{lemma}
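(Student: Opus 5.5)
We prove Lemma~\ref{lem:pp-to-count} by working on a compact window $[a,b]$ and showing that the counting function converges in $J_1$ there, with a time change that moves the jump locations of $\mu_n$ onto those of $\mu$. Local $J_1$ convergence on $D(\mathbb R,\mathbb Z_{\geq0})$ only requires $J_1$ convergence of the restrictions to $[a,b]$ for a sequence of $a\downarrow-\infty$, $b\uparrow\infty$ that are continuity points of the limit. Since $\mu$ is locally finite on $(-\infty,\infty]$, its atoms in any $(a,\infty]$ are finitely many. Since $\mu$ is simple, these atoms are distinct, so we may choose such $a,b$ avoiding all atoms of $\mu$.

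Fix $a<b$ with $\mu(\{a\})=\mu(\{b\})=0$, and consider the relatively compact set $B=(a,\infty]$. Because $\mu$ has no atom at $+\infty$, we have $\mu(\partial B)=\mu(\{a\})=0$.

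By the matching property of vague convergence recalled above, for large $n$ we can write $\mu_n\res{B}=\sum_{j=1}^m\delta_{x_{n,j}}$ with $x_{n,j}\to x_j$. Here $x_1>\dots>x_m$ are the atoms of $\mu$ in $B$; they are distinct, finite, and all lie in $(a,\infty)$. Hence for large $n$:
\begin{itemize}
\item the $x_{n,j}$ are distinct, lie in $(a,\infty)$ (in particular none equals $+\infty$), and have the same ordering as the $x_j$;
\item for $x\in[a,b]$,
\[
\mu_n((x,\infty])=\#\{j:x_{n,j}>x\},\qquad \mu((x,\infty])=\#\{j:x_j>x\}.
\]
\end{itemize}
So both counting functions are step functions on $[a,b]$ that start from $m$ at $x=a$ and drop by one at each $x_{n,j}$, respectively $x_j$.

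Define $\lambda_n:[a,b]\to[a,b]$ to be the increasing piecewise-linear homeomorphism fixing $a$ and $b$ and sending $x_j\mapsto x_{n,j}$ for the atoms $x_j\in(a,b)$. This is possible because $b$ is not an atom, so for large $n$ the $x_{n,j}$ lie on the same side of $b$ as the $x_j$. With this choice,
\[
\mu_n((\lambda_n(x),\infty])=\mu((x,\infty])\quad\text{for all }x\in[a,b],
\]
and $\|\lambda_n-\mathrm{id}\|_\infty\le\max_j|x_{n,j}-x_j|\to0$. This is exactly $J_1$ convergence on $[a,b]$.

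The only point needing care is that the right-continuity convention matches: $x\mapsto\mu((x,\infty])$ is càdlàg, since $(x,\infty]$ decreases as $x$ increases and jumps occur at atoms, and $\lambda_n$ transports jumps to jumps. Letting $a\downarrow-\infty$ and $b\uparrow\infty$ along non-atoms gives convergence in the local Skorokhod topology, which proves \eqref{eq:pp-to-count}. The main obstacle is merely bookkeeping at the endpoints, which is handled by choosing $a$ and $b$ off the atoms of $\mu$.
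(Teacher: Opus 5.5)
Your proposal is correct and follows essentially the same route as the paper. Both fix endpoints $a<b$ that are not atoms of $\mu$, use vague convergence to match the finitely many distinct finite atoms of $\mu_n$ to those of $\mu$, and take the increasing piecewise-linear bijection $\lambda_n$ of $[a,b]$ with $\lambda_n(x_j)=x_{n,j}$, so that $\mu_n((\lambda_n(x),\infty])=\mu((x,\infty])$ on $[a,b]$ and $\|\lambda_n-\mathrm{id}\|_\infty\to0$. The only cosmetic difference is that you match atoms on $(a,\infty]$ in one step, whereas the paper matches atoms in $(a,b)$ and separately uses $\mu_n((b,\infty])=\mu((b,\infty])$ for large $n$.
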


  For $\mu \in \Mloc(({-}\infty,+\infty])$, and $a \in \mathbb{R}$ we write $\vartheta_a \mu$ the shifted measure,  defined as 
  \[ \vartheta_a \mu(B)=\mu(B-a) \quad \text{ for all Borel set } B. \]  

We say that a point process $\Xi$ is a \emph{decorated Poisson point
process} (also called a Poisson cluster process), and write 
$\Xi\sim\DPPP(\mu,\mathcal D)$ if there exist a Poisson point process $\sum_i\delta_{x_i}\sim\PPP(\mu)$   
and a sequence $(\mathcal D^{(i)})_{i\geq1}$ of i.i.d.\ copies of a
point process $\mathcal D$, independent of $\sum_i\delta_{x_i}$,
such that  
\begin{equation}\label{eq-def-DPPP}
   \Xi \, \overset{\mathrm{law}}{=}  \, \sum_{i \ge 1} \vartheta_{x_i} \mathcal{D}^{(i)}
\end{equation}  
When $\mathcal D=\delta_0$ almost surely, this reduces to a Poisson
point process with intensity $\mu$.

A point process $\Xi$ is \emph{exp-$1$-stable} if, for every
$\alpha,\beta\in\R$ with $e^\alpha+e^\beta=1$,
\begin{equation}\label{eq:exp-one-stability}
 \Xi\overset{\mathrm{law}}{=}
 \vartheta_\alpha\Xi^{(1)}+\vartheta_\beta\Xi^{(2)},
\end{equation}
where $\Xi^{(1)}$ and $\Xi^{(2)}$ are independent copies of $\Xi$. The general LePage decomposition for
stable point processes (see \cite[Section~2]{MaillardStable}) gives 
the Poisson cluster representation below. 
We use Maillard's formulation in \cite{MaillardStable}, where a succinct proof using  elementary methods was given. Comparing with Corollary~3.2 in \cite{MaillardStable}, we shift  
the decoration  so that its rightmost atom is at zero. 
\begin{lemma}[
{\cite[Corollary~3.2]{MaillardStable}}]
\label{lem:maillard-exp-stable}
Let $\Xi$ be an exp-$1$-stable point process in
$\Mloc((-\infty,+\infty])$ such that
$\P(\Xi\ne0)>0$ and $\Xi(\{+\infty\})=0$ almost surely.
Then there exist a constant $C\in(0,\infty)$ and a point process
$\mathcal D$ on $(-\infty,0]$, whose rightmost atom is at zero almost
surely, such that
\begin{equation}\label{eq:maillard-dppp}
 \Xi\sim\DPPP(Ce^{-x}\,\dif x,\mathcal D).
\end{equation}
In particular, for every $a\in\R$,  $\P (\Xi((a,+\infty])=0 )=\exp\{-Ce^{-a}\}$.
\end{lemma}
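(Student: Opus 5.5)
Since Lemma~\ref{lem:maillard-exp-stable} is quoted from \cite{MaillardStable}, what is needed is a self-contained route to the Poisson cluster representation of an exp-$1$-stable $\Xi$. I would prove it via Laplace functionals. Fix a nonnegative $f\in C_{\mathrm c}((-\infty,+\infty])$ and set $L(f):=-\log\E[e^{-\langle f,\Xi\rangle}]$. Stability gives $L(f)=L(f(\cdot+\alpha))+L(f(\cdot+\beta))$ whenever $e^\alpha+e^\beta=1$. Iterating this with dyadic, and then general finite, splittings shows that for every $k$, $\Xi$ is equal in law to a superposition of $k$ independent copies shifted by $-\log k$. Hence $\Xi$ is infinitely divisible. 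By the Kallenberg--Matthes representation (no deterministic part, since $\Xi$ is integer-valued), $L(f)=\int(1-e^{-\langle f,\mu\rangle})\Lambda(\dif\mu)$ for a unique Lévy measure $\Lambda$ on $\Mloc\setminus\{0\}$. Uniqueness of $\Lambda$ turns stability into the scaling identity $\Lambda\circ\vartheta_{\alpha}^{-1}+\Lambda\circ\vartheta_{\beta}^{-1}=\Lambda$. Taking $\alpha=\beta=-\log 2$ and then general splittings, one obtains $\Lambda\circ\vartheta_{x}^{-1}=e^{-x}\Lambda$ for all real $x$.

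Next I disintegrate $\Lambda$ along the position $M(\mu)$ of the rightmost atom. This is finite for $\Lambda$-a.e.\ $\mu$, because $\Xi$ is locally finite on $(-\infty,+\infty]$ with no atom at $+\infty$. Set $\mathcal D(\mu):=\vartheta_{-M(\mu)}\mu$, whose rightmost atom is at $0$. Under $\Lambda$, the map $\mu\mapsto(M(\mu),\mathcal D(\mu))$ pushes forward to a measure $\rho$ on $\R\times\Mloc$. The shift covariance gives $\rho(\dif m,\dif d)=e^{-m}\dif m\,\Theta(\dif d)$ for some measure $\Theta$ on decorations. The key point is that $\Theta$ is finite. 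Indeed, $\Lambda(\{\mu:\mu((a,\infty])>0\})=-\log\P(\Xi((a,\infty])=0)$, and this is finite because $\Xi((a,\infty])<\infty$ a.s. It is also positive for some $a$, since $\P(\Xi\neq0)>0$. On the other hand, the same quantity equals $\Theta(\text{total})\,e^{-a}$. Setting $C:=\Theta(\text{total})$ and normalising $\Theta/C$ into the law of $\mathcal D$, the integrated Laplace functional becomes exactly that of $\DPPP(Ce^{-x}\dif x,\mathcal D)$. The void-probability identity $\P(\Xi((a,\infty])=0)=\exp\{-Ce^{-a}\}$ then falls out.

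I expect the main obstacle to be the passage from the binary stability identity to full scaling covariance of $\Lambda$ for all shifts. The identity directly yields invariance only for a dense set of shifts, such as $\log$ of rationals. Extending it to all $x$ needs continuity in $x$ of $\Lambda\circ\vartheta_x^{-1}$ tested on sets of the form $\{\mu((a,\infty])>0\}$. This continuity is available because $a\mapsto-\log\P(\Xi((a,\infty])=0)$ is monotone and finite, so it is determined on a dense set up to its countably many discontinuities. A secondary technical step is checking measurability of $M$ and $\mathcal D$ on $\Mloc$.
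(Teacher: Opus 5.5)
The paper gives no argument of its own here. It imports Maillard's Corollary~3.2, the LePage decomposition of stable point processes, and only renormalises the decoration so that its rightmost atom sits at $0$. Your proposal is a correct, self-contained version of that decomposition: infinite divisibility via $\Xi\overset{\mathrm{law}}{=}\sum_{i\le k}\vartheta_{-\log k}\Xi^{(i)}$, then the KLM measure $\Lambda$, scaling of $\Lambda$ by uniqueness, and disintegration along the rightmost atom. Normalising by the rightmost atom produces the decoration directly in the form the paper states. It also shows where each hypothesis enters: $\Xi(\{+\infty\})=0$ makes $M$ finite $\Lambda$-a.e., local finiteness near $+\infty$ gives $C<\infty$, and $\P(\Xi\ne0)>0$ gives $C>0$. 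The citation buys brevity and Maillard's more general random-measure version.

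There are two points to fix.

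\emph{Sign.} With $\vartheta_a\mu(B)=\mu(B-a)$, the choice $\alpha=\beta=-\log2$ gives $2\,\Lambda\circ\vartheta_{-\log 2}^{-1}=\Lambda$. So the scaling is $\Lambda\circ\vartheta_x^{-1}=e^{x}\Lambda$, not $e^{-x}\Lambda$. Only this sign is compatible with your (correct) disintegration $\rho(\dif m,\dif d)=e^{-m}\dif m\,\Theta(\dif d)$ with $\Theta$ finite.

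\emph{The ``main obstacle''.} It disappears if you disintegrate first. Sets of the form $\{\mu((a,\infty])>0\}$ alone do not determine $\Lambda$. Instead, for each Borel set $D$ of decorations, put $g_D(a):=\Lambda\bigl(M>a,\ \vartheta_{-M}\mu\in D\bigr)$. This is finite and nonincreasing in $a$. It satisfies $g_D(a+x)=e^{-x}g_D(a)$ for $x$ in the dense group $\log\mathbb Q_{>0}$, which is all the $k$-fold splittings provide. Squeezing $a$ between points of that group gives $g_D(a)=e^{-a}g_D(0)$ for every real $a$. Then $\Theta(D):=g_D(0)$ yields the product form on the $\pi$-system of rectangles $(a,\infty)\times D$, so the scaling for irrational shifts is never needed.

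A minor point: $\Lambda(\mu((a,\infty])>0)=-\log\P(\Xi((a,\infty])=0)$ is finite by the KLM integrability condition, or by infinite divisibility. If $\P(\Xi(B)=0)=0$, then each of the $k$ i.i.d.\ summands charges $B$ a.s., forcing $\Xi(B)\ge k$ for every $k$. The bound $\Xi((a,\infty])<\infty$ a.s.\ alone does not give this.
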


We finally recall a Chen--Stein bound, which we will apply conditionally
on the interval fragmentation $\mathcal I$.
For integer-valued random variables $X,Y$ and a $\sigma$-field
$\mathcal G$, define their conditional total variation distance by 
\begin{equation}\label{eq:total-variation-distance}
  d_{\mathrm{TV},  \mathcal{G}}\bigl(X,Y\bigr)
  :=   \frac12\sum_{j\in\mathbb Z}\,  \bigl|\P(X=j\mid \mathcal{G})-\P(Y=j \mid \mathcal{G})\bigr|,
\end{equation} 
We write $d_{\mathrm{TV}}(X,Y)$ for the unconditional total variation
distance. 

\begin{lemma}[{\cite[Theorem 1]{ArratiaGoldsteinGordon}}]
\label{lem:chen-stein}
Let $(\xi_\alpha)_{\alpha\in I}$ be a family of Bernoulli random variables with finite index set $I$.   Set 
\[
Y :=\sum_{\alpha\in I}\xi_\alpha \quad \text{ and } \quad 
\lambda=\E[Y ]= \sum_{\alpha\in I} \E[\xi_\alpha ]
\]
For each  $\alpha\in I$, assume a dissociating neighborhood $\mathcal{N}_\alpha\subset I\setminus\{\alpha\}$ is given. That is,  
$\xi_\alpha$ is independent of $(\xi_\beta)_{\beta\notin \bar{\mathcal{N}}_\alpha}$ with $\bar{\mathcal{N}}_{\alpha} := \mathcal N_\alpha \cup \{\alpha\}$.   
Then, we have 
\[
d_{\mathrm{TV} }\bigl(Y,\operatorname{Poi}(\lambda)\bigr)
\le \sum_{\alpha\in I}\sum_{\beta\in \bar{\mathcal{N}}_\alpha} \E[ \xi_\alpha  ]\E[\xi_\beta  ]
+ \sum_{\alpha\in I}
\sum_{ \beta\in \mathcal N_\alpha  }
\E[\xi_\alpha \xi_\beta  ].
\]  
\end{lemma}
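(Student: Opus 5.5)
The last statement is Lemma~\ref{lem:chen-stein}, the Arratia--Goldstein--Gordon bound. I would prove it with the Stein--Chen method for the Poisson law, in the standard local-dependence form. The argument splits into a Stein-equation estimate and an exact decomposition of $\E[\lambda g(Y+1)-Yg(Y)]$ in which the dissociation hypothesis kills the far-away term.

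\textbf{Stein equation and its bounds.} For $A\subset\mathbb Z_{\geq0}$, let $g=g_A$ solve
\[
\lambda g(j+1)-jg(j)=\ind{j\in A}-\P(\Poi(\lambda)\in A),\qquad j\geq0 .
\]
Then
\[
\P(Y\in A)-\P(\Poi(\lambda)\in A)=\E[\lambda g(Y+1)-Yg(Y)].
\]
I would quote the standard bounds
\[
\|g\|_\infty\le \min(1,\lambda^{-1/2}),\qquad \|\Delta g\|_\infty\le \min(1,\lambda^{-1}),
\]
where $\Delta g(j)=g(j+1)-g(j)$. Only the crude forms $\|g\|_\infty\le1$ and $\|\Delta g\|_\infty\le1$ are needed, since the stated bound carries no $\lambda$-dependent prefactors. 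The sharper $\|\Delta g\|_\infty\le1$ is the step I expect to be the main obstacle if done from scratch. I would prove it through the explicit formula for $g_{\{k\}}$, showing that $\Delta g_{\{k\}}(j)$ is positive only at $j=k$. Summing over $k\in A$ then gives $|\Delta g_A|\le \Delta g_{\{k\}}(k)\le(1-e^{-\lambda})/\lambda\le1$.

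\textbf{Decomposition.} Write $p_\alpha=\E\xi_\alpha$, $V_\alpha=\sum_{\beta\notin\bar{\mathcal N}_\alpha}\xi_\beta$ and $U_\alpha=\sum_{\beta\in\mathcal N_\alpha}\xi_\beta$, so that $Y=\xi_\alpha+U_\alpha+V_\alpha$. Then
\[
\E[\lambda g(Y+1)-Yg(Y)]=\sum_\alpha \bigl(p_\alpha\E g(Y+1)-\E[\xi_\alpha g(Y)]\bigr).
\]
On $\{\xi_\alpha=1\}$ we have $g(Y)=g(U_\alpha+V_\alpha+1)$. The dissociation hypothesis says $\xi_\alpha$ is independent of $V_\alpha$, which gives
\[
\E[\xi_\alpha g(V_\alpha+1)]=p_\alpha\E g(V_\alpha+1).
\]
Using this, each summand splits into two pieces:
\[
p_\alpha\E[g(Y+1)-g(V_\alpha+1)]\;-\;\E\bigl[\xi_\alpha\{g(U_\alpha+V_\alpha+1)-g(V_\alpha+1)\}\bigr].
\]

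\textbf{Bounding the pieces.} By the mean value bound $|g(a+m)-g(a)|\le m\|\Delta g\|_\infty$, the first piece is at most $\|\Delta g\|\,p_\alpha\E[\xi_\alpha+U_\alpha]=\|\Delta g\|\sum_{\beta\in\bar{\mathcal N}_\alpha}p_\alpha p_\beta$. The second piece is at most $\|\Delta g\|\,\E[\xi_\alpha U_\alpha]=\|\Delta g\|\sum_{\beta\in\mathcal N_\alpha}\E[\xi_\alpha\xi_\beta]$. Summing over $\alpha$, using $\|\Delta g\|\le1$, and taking the supremum over $A$ bounds $\sup_A|\P(Y\in A)-\P(\Poi(\lambda)\in A)|=d_{\mathrm{TV}}$ by the stated sum, with no factor $2$ under the $\tfrac12\sum$ convention.
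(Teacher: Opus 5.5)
Your argument is correct and is the standard Stein--Chen proof of Arratia--Goldstein--Gordon's Theorem~1, specialised to dissociated neighbourhoods (where their third error term vanishes); the paper gives no proof of its own and simply cites that theorem, and your route even yields the sharper prefactor $(1-e^{-\lambda})/\lambda$ in front of the two sums. The one step to state more carefully is the two-sided bound on $\Delta g_A$: summing the $g_{\{k\}}$ gives $\Delta g_A(j)\le\Delta g_{\{j\}}(j)$, the matching lower bound comes from $g_{A^{c}}=-g_A$, and the case $\lambda=0$ is trivial.
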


 \section{Poissonian limits up to the freezing threshold}
\label{sec:poisson}  
This section is devoted to proving the cases $2\le q\le \theta_*$ of Theorem~\ref{thm:intro-separation-point-process}, Corollary~\ref{thm:three-regimes}, and Theorem~\ref{thm:intro-exact-q-clade-process}. Throughout, we assume that $\theta_*\ge 2$, since otherwise there is no integer $q$ in this range. Our proof strategy is to first reduce the weak convergence of the separation-time extremal process   to the convergence in distribution of suitable counting variables by Lemma \ref{lem:kallenberg-criterion}, and then apply the Chen--Stein method for Poisson approximation Lemma \ref{lem:chen-stein}. Variants of this strategy have also been employed, among others, in \cite{ChenavierStationaryTessellations,ChiariniCiprianiHazra,GhoshKirsebomRoy,KistlerSchertzerSchmidt,MaMaillardMinimumPath}.

Throughout this section we set, for $2 \le q \le \theta_*$,
\[
 \mathcal{W}_q:=W_q\ind{q<\theta_*}+a_*Z\ind{q=\theta_*}.
\] 
Recall $a_*$ is the constant defined in \eqref{eq:def-a*}.
For every $R \in \mathscr{R}_{(-\infty,\infty]}$, define 
\[
 \eta_A^{(n,q)}(R)
 :=\ind{\kappa(q)T_A-m_{n,q}\in R} \ , \ \text{ and hence }
 \ 
 \Xi_{n,q}(R)=\sum_{A\in\binom{[n]}q} \eta_A^{(n,q)}(R).
\]
\subsection{Poisson approximation for separation-time counts} Our goal in this subsection is to establish the following conditional
Poisson approximation.

\begin{lemma} 
\label{prop:ring-count-Poisson}
Fix an integer $2\leq q\leq\theta_*$, and let 
$R  \in \mathscr{R}_{(-\infty,\infty]}$.  
We have 
\begin{align}
  \Lambda_{n,q}(R)
  :=\E[\Xi_{n,q}(R)\mid\mathcal{I}] &\xrightarrow[n \to \infty]{\P}  \frac{\mathcal{W}_q}{q!} \int_R e^{-x}\,\dif x , \text{ and } \label{eq:ring-count-mean-limit} \\
  d_{\mathrm{TV}, \mathcal{I}} \bigl (
 \Xi_{n,q}(R) , 
  \Poi(\Lambda_{n,q}(R))  \bigr )
  &\xrightarrow[n \to \infty]{\P} \  0 .\label{eq:ring-count-TV}
\end{align}
\end{lemma}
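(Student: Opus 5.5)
The plan is to work throughout conditionally on the interval fragmentation $\mathcal I$ of Lemma~\ref{lem:genealogical-paintbox}. Given $\mathcal I$, the labels are placed at i.i.d.\ uniform points $U_j$. A $q$-set $A$ is unseparated at time $t$ if and only if all $U_j$, $j\in A$, lie in one interval component of $\mathcal I(t)$, and these events decrease in $t$. Write $t_x:=t_{n,q}(x)$, with $t_{+\infty}:=\infty$, $S_q(\infty):=0$, and set $x_0:=\min R$ (the left endpoint of the leftmost interval of $R$). By finite additivity over the disjoint intervals in $R$, it suffices to treat $R=(a,b]$ for the mean, while for the Chen--Stein bound I keep general $R$.

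\textbf{Step 1 (conditional mean).} For $R=(a,b]$ we have $\eta_A^{(n,q)}(R)=\ind{T_A>t_a}-\ind{T_A>t_b}$. By \eqref{eq:conditional-unseparated-probability} and linearity,
\[
 \Lambda_{n,q}((a,b])=\tbinom nq\bigl(S_q(t_a)-S_q(t_b)\bigr).
\]
So it suffices to show $\binom nq S_q(t_x)\to\frac{\mathcal W_q}{q!}e^{-x}$ in probability for each real $x$; the case $b=+\infty$ is immediate since $S_q(\infty)=0$.

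If $q<\theta_*$, then $e^{-\kappa(q)t_x}=n^{-q}e^{-x}$, so $\binom nq S_q(t_x)=\binom nq n^{-q}e^{-x}W_q(t_x)\to W_q e^{-x}/q!$ by Lemma~\ref{lem:brw-inputs}(\ref{prop:martingale}).

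If $q=\theta_*$, then $e^{-\kappa_* t_x}=n^{-q}(\log n)^{1/2}e^{-x}$. Since $t_x\sim q\log n/\kappa_*$, Lemma~\ref{lem:brw-inputs}(\ref{input:SH}) gives
\[
 (\log n)^{1/2}W_{\theta_*}(t_x)\to\sqrt{\kappa_*/\theta_*}\,\sqrt{2/(\pi\sigma_*^2)}\,Z/\theta_*=a_*Z,
\]
which yields \eqref{eq:ring-count-mean-limit}.

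\textbf{Step 2 (Chen--Stein conditionally on $\mathcal I$).} I apply Lemma~\ref{lem:chen-stein} under $\P(\cdot\mid\mathcal I)$ with index set $\binom{[n]}q$ and $\xi_A=\eta^{(n,q)}_A(R)$. The neighborhood $\mathcal N_A$ is the set of $B\neq A$ with $B\cap A\neq\varnothing$. Given $\mathcal I$, $\xi_A$ is a function of $(U_j)_{j\in A}$ only, so it is independent of $(\xi_B)_{B\cap A=\varnothing}$.

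For the first term, $\E[\xi_A\mid\mathcal I]\le S_q(t_{x_0})$ and $|\bar{\mathcal N}_A|\le Cn^{q-1}$. This term is therefore at most $Cn^{2q-1}S_q(t_{x_0})^2=O_\P(n^{-1})$ by Step~1.

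For the second term, let $|A\cap B|=k\in\{1,\dots,q-1\}$. On $\{\xi_A\xi_B=1\}$ both sets are unseparated at $t_{x_0}$. They share a label, so $A\cup B$, which has $p=2q-k\in\{q+1,\dots,2q-1\}$ elements, lies in a single component. Hence $\E[\xi_A\xi_B\mid\mathcal I]\le S_p(t_{x_0})$, and the second term is at most $C\sum_{p=q+1}^{2q-1}n^pS_p(t_{x_0})$. Using $S_p\le X_1^{p-q}S_q$,
\[
 n^pS_p(t_{x_0})\le \bigl(nX_1(t_{x_0})\bigr)^{p-q}\,n^qS_q(t_{x_0}).
\]
Here $n^qS_q(t_{x_0})$ is tight by Step~1, so everything reduces to showing $nX_1(t_{x_0})\to0$ in probability.

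\textbf{Step 3 (main obstacle: the largest fragment is below resolution $1/n$).} This uses Lemma~\ref{lem:brw-inputs}(\ref{input:MinimumBRW}): $\log(1/X_1(t))=\frac{\kappa_*}{\theta_*}t+\frac{3}{2\theta_*}\log t+o_\P(\log t)$.

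If $q<\theta_*$, then $t_{x_0}\sim q\log n/\kappa(q)$. Because $\theta\mapsto\theta/\kappa(\theta)$ is strictly decreasing on $(1,\theta_*]$ (its derivative has the sign of $-\Delta(\theta)<0$), we get $\frac{\kappa_*}{\theta_*}\cdot\frac{q}{\kappa(q)}>1$. Hence $\log(1/X_1)-\log n\ge c\log n$ with high probability.

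If $q=\theta_*$, then $\kappa_* t_{x_0}=\theta_*\log n-\tfrac12\log\log n+x_0$. This gives
\[
 \log(1/X_1)=\log n+\bigl(\tfrac{3}{2\theta_*}-\tfrac1{2\theta_*}\bigr)\log\log n+o_\P(\log\log n),
\]
so $nX_1\approx(\log n)^{-1/\theta_*}\to0$. Some care is needed because the $o(\log t)$ error in (\ref{input:MinimumBRW}) is compared against a $\log\log n$ gain. That error is $o(\log\log n)$ since $\log t\asymp\log\log n$, so the argument closes.

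The Chen--Stein bound then tends to zero in probability, which gives \eqref{eq:ring-count-TV}.
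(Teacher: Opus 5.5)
Your proposal is correct and follows the paper's proof essentially step for step. It uses the same linearity computation of $\Lambda_{n,q}(R)$ via \eqref{eq:conditional-unseparated-probability}, the same conditional Chen--Stein bound with overlapping $q$-sets as dependency neighbourhoods and $S_{q+\ell}\le X_1^{\ell}S_q$, and the same Seneta--Heyde plus Lemma~\ref{lem:brw-inputs}(\ref{input:MinimumBRW}) argument at $q=\theta_*$. The only deviation is at $q<\theta_*$: you obtain $nX_1(t_{n,q}(x))\to0$ from the $3/2$ minimum estimate together with the monotonicity of $\theta/\kappa(\theta)$, whereas the paper uses the more elementary first-moment bound $\E[(nX_1)^{aq}]\le n^{aq}\E[S_{aq}(t_{n,q}(x))]\to0$ for some $a>1$ with $\kappa(aq)>a\kappa(q)$, which is the same inequality in a different guise.
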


Before proving Lemma~\ref{prop:ring-count-Poisson}, we record a basic convergence result that will be used both in its proof and in subsequent arguments. Recall the time window $t_{n,q}(x)$ defined  in  \eqref{eq:unified-sampling-window}.  

\begin{lemma} 
\label{lem:point-process-inputs}
For every fixed integer $2\leq q\leq\theta_*$ and $x\in\R$, we have 
\begin{equation}
 n^qS_q(t_{n,q}(x))
 \xrightarrow[n \to \infty]{\P} e^{-x}\mathcal{W}_q  
  \quad \text{ and } \quad 
 nX_1(t_{n,q}(x))
  \xrightarrow[n \to \infty]{\P} 0  .
 \label{eq:point-process-inputs}
\end{equation} 
\end{lemma}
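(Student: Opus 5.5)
We need to prove Lemma \ref{lem:point-process-inputs}: for $2\le q\le\theta_*$,
\[
n^qS_q(t_{n,q}(x))\to e^{-x}\mathcal W_q
\qquad\text{and}\qquad
nX_1(t_{n,q}(x))\to0
\]
in probability. Write $t=t_n:=t_{n,q}(x)$. Then $t_n\to\infty$, and $\kappa(q)t_n=m_{n,q}+x$ for $n$ large.

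\emph{Case $q<\theta_*$.} Here $n^qS_q(t_n)=n^qe^{-\kappa(q)t_n}W_q(t_n)=e^{-x}W_q(t_n)$. By Lemma~\ref{lem:brw-inputs}(\ref{prop:martingale}) this converges a.s. to $e^{-x}W_q$, which gives the first claim.

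For the second claim, use $X_1(t)^q\le S_q(t)$, so that $nX_1(t_n)\le (n^qS_q(t_n))^{1/q}$. This bound is only $O_\P(1)$, so we need something sharper. Pick $\theta\in(q,\theta_*)$. Then $X_1^\theta\le S_\theta$, which gives
\[
nX_1(t_n)\le n\,e^{-\kappa(\theta)t_n/\theta}\,W_\theta(t_n)^{1/\theta}.
\]
Now $W_\theta(t_n)\to W_\theta<\infty$ a.s. Moreover, $t_n\sim q\log n/\kappa(q)$, so the exponent is
\[
\log n\Bigl(1-\frac{q\,\kappa(\theta)}{\theta\,\kappa(q)}\Bigr)+O(1).
\]
The map $\theta\mapsto\kappa(\theta)/\theta$ has derivative $\Delta(\theta)/\theta^2>0$ on $(1,\theta_*)$, so it is strictly increasing there. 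Hence $\kappa(\theta)/\theta>\kappa(q)/q$, the exponent tends to $-\infty$, and $nX_1(t_n)\to0$.

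\emph{Case $q=\theta_*$.} Now $\kappa_*t_n=q\log n-\tfrac12\log\log n+x$. This gives
\[
n^qS_q(t_n)=n^qe^{-\kappa_*t_n}W_{\theta_*}(t_n)=e^{-x}(\log n)^{1/2}\,W_{\theta_*}(t_n).
\]
Write this as $e^{-x}\sqrt{\log n/t_n}\cdot\sqrt{t_n}\,W_{\theta_*}(t_n)$. Since $t_n\sim\theta_*\log n/\kappa_*$, we have $\log n/t_n\to\kappa_*/\theta_*$. By Lemma~\ref{lem:brw-inputs}(\ref{input:SH}), $\sqrt{t_n}\,W_{\theta_*}(t_n)\to\sqrt{2/(\pi\sigma_*^2)}\,Z/\theta_*$ in probability. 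Multiplying,
\[
\sqrt{\frac{\kappa_*}{\theta_*}}\sqrt{\frac{2}{\pi\sigma_*^2}}\frac{1}{\theta_*}
=\Bigl(\frac{2\kappa_*}{\pi\theta_*^3\sigma_*^2}\Bigr)^{1/2}=a_*,
\]
which gives the limit $e^{-x}a_*Z$.

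For the largest fragment, Lemma~\ref{lem:brw-inputs}(\ref{input:MinimumBRW}) gives
\[
\log\frac{1}{X_1(t_n)}=\frac{\kappa_*}{\theta_*}t_n+\frac{3+o_\P(1)}{2\theta_*}\log t_n .
\]
Substituting $t_n$, the first term equals $\log n-\frac{1}{2\theta_*}\log\log n+O(1)$. The second term is $\frac{3+o_\P(1)}{2\theta_*}\log\log n$. Therefore
\[
\log\bigl(nX_1(t_n)\bigr)=-\frac{1+o_\P(1)}{\theta_*}\log\log n\to-\infty
\]
in probability.

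\emph{Main obstacle.} The only delicate point is the critical bookkeeping of the $\log\log n$ terms and constants. In particular, the identification with $a_*$ requires $t_n/\log n\to\theta_*/\kappa_*$ and uses that (\ref{input:SH}) holds along the deterministic sequence $t_n\to\infty$, which is immediate since it is convergence in probability as $t\to\infty$. In the subcritical case, the key step is choosing an exponent $\theta\in(q,\theta_*)$ and using strict monotonicity of $\kappa(\theta)/\theta$.
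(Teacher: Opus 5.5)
Your proof is correct and follows the paper's argument essentially line by line. Below $\theta_*$ you use the identity $n^qS_q(t_{n,q}(x))=e^{-x}W_q(t_{n,q}(x))$; at $\theta_*$ you combine Seneta--Heyde scaling with $t_{n,\theta_*}(x)/\log n\to\theta_*/\kappa_*$ to get $a_*Z$, and the $\frac{3}{2\theta_*}\log t$ correction from Lemma~\ref{lem:brw-inputs} to get $\log(nX_1)\approx-\theta_*^{-1}\log\log n$; the one cosmetic difference is the subcritical bound on $nX_1$, where the paper applies Markov's inequality to $\E[(nX_1)^{aq}]\le n^{aq}e^{-\kappa(aq)t}$ with $\kappa(aq)>a\kappa(q)$, while you use $X_1^\theta\le e^{-\kappa(\theta)t}W_\theta(t)$ and the a.s.\ finite martingale limit, both resting on $\kappa(\theta)/\theta$ increasing past $q$ because $\Delta(q)>0$.
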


\begin{proof}[Proof of Lemma \ref{prop:ring-count-Poisson}]
By \eqref{eq:conditional-unseparated-probability}, and using the convention $S_q( \infty):=0$, we have $\P(T_A \in (a,b]\mid\mathcal{I})=S_q(a)- S_q(b)$ for every $A \in \binom{[n]}{q}$. For  $R=\bigcup_{j=1}^m(a_j,b_j]$ with pairwise disjoint $(a_j,b_j]$, the linearity gives 
\[
 \Lambda_{n,q}(R) = \binom nq\sum_{j=1}^m
 \bigl[S_q(t_{n,q}(a_j))-S_q(t_{n,q}(b_j))\bigr]
\]
   The finite collection of endpoint limits in
Lemma~\ref{lem:point-process-inputs}, together with
$\binom nq/n^q\to1/q!$, proves
\eqref{eq:ring-count-mean-limit}.

We next condition on the  interval fragmentation process $\mathcal{I}$ and apply
Lemma~\ref{lem:chen-stein} to $ \Xi_{n,q}(R)=\sum_{A\in\binom{[n]}q} \eta_A^{(n,q)}(R)$. Since indicators indexed by disjoint label sets are conditionally independent by Lemma \ref{lem:genealogical-paintbox}, we can use
\[
 \mathcal N_A
 := \Bigl\{  A'\in\binom{[n]}q:
 A'\ne A,\ A'\cap A\ne\varnothing \Bigr\} 
\]
as the dependency neighborhood.     
Put
$\underline{a}:= \inf R \in (-\infty,+\infty]$.  Then 
\[
 \E \bigl[ \eta_A^{(n,q)}(R)\mid\mathcal{I} \bigr]  \le  \P(  T_A>t_{n,q}(\underline a) \mid\mathcal{I}) 
 \leq S_q(t_{n,q}(\underline{a})).
\]  
If $|A\cap A'|=q-\ell$ with $\ell\in\{1,\ldots,q-1\}$ and both marked
events occur, then all $q+\ell$ labels in $A\cup A'$ lie in one block at
$t_{n,q}(\underline{a})$.  Consequently, 
\begin{align}
   \E \bigl[  \eta_A^{(n,q)}(R)\eta_{A'}^{(n,q)}(R)\mid\mathcal{I}\bigr] & \le \P \bigl( T_{A \cup A'} > t_{n,q}(\underline{a}) \mid \mathcal{I} \bigr) \\
   &
 \leq S_{q +\ell}(t_{n,q}(\underline{a}))  \ 
 \leq S_q(t_{n,q}(\underline{a}))\, [X_1(t_{n,q}(\underline{a}))]^{\ell}.
\end{align} 
Applying Lemma~\ref{lem:chen-stein}  we obtain
\begin{align*}
&d_{\mathrm{TV},\mathcal{I}}
\bigl(\Xi_{n,q}(R),\Poi(\Lambda_{n,q}(R))\bigr) \\
&\quad  \le
\!\!\!\sum_{A\in\binom{[n]}q, A'\in\overline{\mathcal N}_A} \!\!\!
\E\bigl[\eta_A^{(n,q)}(R)\mid\mathcal{I}\bigr]
\E\bigl[\eta_{A'}^{(n,q)}(R)\mid\mathcal{I}\bigr] \,+\!\!\! 
\sum_{A\in\binom{[n]}q, A'\in \mathcal{N}_A} \!\!\!
\E\bigl[
    \eta_A^{(n,q)}(R)\eta_{A'}^{(n,q)}(R)
    \mid\mathcal{I}
\bigr] \\
& \quad\lesssim_{q}
 \Bigl\{ 
    \frac{
        \bigl(n^qS_q(t_{n,q}(\underline a))\bigr)^2
    }{n}
    +
    n^qS_q(t_{n,q}(\underline a))
    \sum_{\ell=1}^{q-1}
    \bigl(nX_1(t_{n,q}(\underline a))\bigr)^{\ell} \Bigr\}  . 
\end{align*} 
Here we used that for each $\ell\in\{1,\ldots,q-1\}$, the number of ordered
pairs $(A,A')$ satisfying $|A\cap A'|=q-\ell$ is
$\binom{n}{q}\binom{q}{\ell}\binom{n-q}{\ell}
\lesssim_q n^{q+\ell}$.
Combining Lemma~\ref{lem:point-process-inputs} with the last inequality,   the required convergence \eqref{eq:ring-count-TV}   follows.
\end{proof}

\begin{proof}[Proof of Lemma \ref{lem:point-process-inputs}]
Suppose first that $q<\theta_*$.  By
Lemma~\ref{lem:brw-inputs}\ref{prop:martingale},
$W_q(t)\to W_q$ almost surely, and
\[
 n^qS_q(t_{n,q}(x))
 =e^{-x}W_q(t_{n,q}(x))
 \xrightarrow[n \to \infty]{\mathrm{a.s.}}  e^{-x}W_q.
\]
Since $q < \theta_*$, we have  $ \Delta(q)= q \kappa'(q)- \kappa(q)>0$.  
 Hence, by differentiating at  $a=1$ and using continuity, there exists  $a>1$ such that
$\kappa(a q )>a\kappa(q)$.  
Using 
$X_1(t)^{aq}\leq S_{aq}(t)$ and
\eqref{eq:exponent-S}, we get  
\begin{align*}
 \E\bigl( [nX_1(t_{n,q}(x)) ]^{aq} \bigr)
 &\leq n^{qa}\E[ S_{qa}(t_{n,q}(x)) ] \\
 &  =  e^{-x\kappa(qa)/\kappa(q)}
 \, n^{q\{a-\kappa(qa)/\kappa(q)\}}  \xrightarrow{n \to \infty}0.
\end{align*} 

It remains to consider $q=\theta_*$.  In this case,
\[
 n^{\theta_*}S_{\theta_*}(t_{n,\theta_*}(x))
 =e^{-x}\sqrt{\log n}\,
 W_{\theta_*}(t_{n,\theta_*}(x)).
\]
Since $t_{n,\theta_*}(x)/\log n\to\theta_*/\kappa_*$,
the Seneta--Heyde scaling \eqref{eq:SH-scaling} gives
\[
 \sqrt{\log n}\,W_{\theta_*}(t_{n,\theta_*}(x))
 \xrightarrow[n \to \infty]{\P}
 \sqrt{\frac{\kappa_*}{\theta_*}}
 \sqrt{\frac{2}{\pi\sigma_*^2}}\frac{Z}{\theta_*}
 =a_*Z.
\]  
For the largest fragment, put $v_*=\kappa_*/\theta_*$.  By
\eqref{eq:largest-fragment-input},
$
 \log X_1(t)=- \frac{\kappa_*}{\theta_*} t-[\frac{3}{2\theta_*}+o_{\P}(1)]\log t
  $.
For $t=t_{n,\theta_*}(x)$, the definition of the sampling window gives
$
 \frac{\kappa_*}{\theta_*} t=\log n-\frac{1}{2\theta_*}\log\log n+\frac{x}{\theta_*}$ and $
  \log t=\log\log n+O(1)$.  
Substitution therefore yields
\[
 \log\{nX_1(t_{n,\theta_*}(x))\}
 =-\Bigl[   \frac{1}{\theta_*}+o_{\P}(1)\Bigr] \log\log n
\]
In particular, $nX_1(t_{n,\theta_*}(x))\to0$ in probability, which
concludes the proof.
\end{proof}

\subsection{From count convergence to process limits}
We shall prove a slightly stronger result.  We first introduce some
notation.  Let $E$ be a Polish space, and let $\mathcal P(E)$ denote the
space of Borel probability measures on $E$, equipped with a metric that induces the weak convergence topology. Recall that 
  for an $E$-valued random element $\xi$ and a
$\sigma$-field $\mathcal G$, we denote by $\mathcal{L}(\xi \mid \mathcal{G})$ or $\mathcal{L}_{ \xi \mid \mathcal{G}}$ or  the regular conditional distribution of
$\xi$ given $\mathcal G$, i.e., 
\[ \mathcal{L}_{ \xi \mid \mathcal{G}} (\omega, B) =\P( \xi \in B \mid \mathcal{G}) (\omega) . \]
We view $\mathcal{L}_{ \xi \mid \mathcal{G}}$  as a random element of $\mathcal P(E)$, and  write
$ \mathcal{L}_{ \xi_n \mid \mathcal{G}} 
 \xrightarrow[n\to\infty]{\P}
\mathcal{L}_{ \xi \mid \mathcal{G}}
$
if these $\mathcal{P}(E)$-valued random elements converge in probability. This is  equivalent to  
\[
 \E[f(\xi_n)\mid\mathcal G]
 \xrightarrow[n\to\infty]{\P}
 \E[f(\xi)\mid\mathcal G] \quad \text{ for every } f \in C_b(E).
\]

Parts~(i) and~(ii) of Theorem~\ref{thm:intro-separation-point-process} thus are immediate consequences of the following proposition. 
\begin{proposition} 
\label{prop:point-process}
Under assumptions
\eqref{eq:dislocation-integrability},
\eqref{eq:entropy-dominance}, and
\eqref{eq:no-sudden-extinction}, fix an integer
$2 \le q  \le \theta_*$.
There exists a locally finite point process $\Xi_q$ on
$(-\infty,+\infty]$, coupled with $\mathcal W_q$, such that
$
  \mathcal L(\Xi_q\mid\mathcal W_q)
  =
  \PPP(
    \frac{\mathcal W_q}{q!}e^{-x}\,\dif x)$
and
\begin{equation}\label{eq:Cox-limit}
  \mathcal{L}_{\Xi_{n,q} \mid \mathcal I } 
 \xrightarrow[n\to\infty]{\P}
 \mathcal{L}_{\Xi_q\mid\mathcal W_q}  \quad  \text{ in } \ \mathcal{P}(\Mloc((-\infty,+\infty])).
\end{equation} 
In particular, we have 
$\, \Xi_{n,q}\xrightarrow[n\to\infty]{\mathrm{law}}\Xi_q$ in
$\Mloc((-\infty,+\infty])$.
\end{proposition}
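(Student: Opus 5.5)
The plan is to build the limiting Cox process explicitly and then prove the conditional convergence \eqref{eq:Cox-limit} through a conditional form of Kallenberg's criterion, with Lemma~\ref{prop:ring-count-Poisson} supplying the counts.

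\textbf{Construction of the limit.} Enlarging the probability space if needed, let $\Xi_q$ be, conditionally on $\mathcal W_q$, a Poisson point process with intensity $\frac{\mathcal W_q}{q!}e^{-x}\,\dif x$ on $(-\infty,+\infty]$. Since this intensity has no atoms and finite mass on every $(a,+\infty]$, the process $\Xi_q$ is locally finite and almost surely simple. It charges no rational point and has no atom at $+\infty$. Hence Lemma~\ref{lem:kallenberg-criterion} applies with the ring $\mathscr R=\mathscr R_{(-\infty,\infty]}$. For $R\in\mathscr R$, the conditional law of $\Xi_q(R)$ given $\mathcal W_q$ is $\Poi(\frac{\mathcal W_q}{q!}\int_R e^{-x}\dif x)$.

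\textbf{Counts for finitely many sets.} Fix $R_1,\dots,R_k\in\mathscr R$. I will first prove that the conditional joint law of $(\Xi_{n,q}(R_j))_{j\le k}$ given $\mathcal I$ converges in probability to that of $(\Xi_q(R_j))_{j\le k}$ given $\mathcal W_q$. It suffices to take the $R_j$ pairwise disjoint, since they can be refined into disjoint atoms of the ring. The joint statement then follows by applying Lemma~\ref{prop:ring-count-Poisson} to linear combinations. A cleaner route is a multivariate Chen--Stein bound, or equivalently a Poisson approximation for the marked sum $\sum_A \eta_A$ with marks in $\{1,\dots,k\}$; the same neighbourhood bounds as in the proof of Lemma~\ref{prop:ring-count-Poisson} control the error. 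A conditionally Poisson vector with independent coordinates is determined by its means, and these means converge in probability by \eqref{eq:ring-count-mean-limit}. Since $\mathcal W_q$ is $\sigma(\mathcal I)$-measurable (it is a limit of functionals of $X$), the conditional laws given $\mathcal I$ of the Poisson vectors converge in probability to the target.

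\textbf{From counts to the process.} To pass to $\mathcal P(\Mloc)$-valued convergence in probability, I will use the subsequence characterisation. Because the family of finite collections from the countable ring $\mathscr R$ is countable, a diagonal argument shows that every subsequence has a further subsequence along which, almost surely, the conditional joint laws of all finite count vectors converge simultaneously. Along it, for almost every $\omega$, Kallenberg's criterion (Lemma~\ref{lem:kallenberg-criterion}) applied to the measures $\mathcal L_{\Xi_{n,q}\mid\mathcal I}(\omega)$ yields convergence to $\mathcal L_{\Xi_q\mid\mathcal W_q}(\omega)$; only the zero-probability and $\geq 2$ probabilities of single sets are needed. This gives \eqref{eq:Cox-limit}. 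Taking expectations of bounded continuous functionals and using dominated convergence then gives $\Xi_{n,q}\to\Xi_q$ in law.

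The main obstacle I expect is the measurability bookkeeping in this last step. One has to choose regular versions so that the pointwise Kallenberg argument makes sense. The remaining probabilistic input is already contained in Lemmas~\ref{prop:ring-count-Poisson} and \ref{lem:point-process-inputs}.
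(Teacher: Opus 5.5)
Your proposal is correct and follows essentially the same route as the paper. You realize $\Xi_q$ as a Cox process given $\mathcal W_q$, which is $\sigma(\mathcal I)$-measurable. Lemma~\ref{prop:ring-count-Poisson} and continuity of the Poisson law in its parameter give, for each set in the countable ring, convergence in probability of the conditional count laws. A diagonal argument then yields a subsequence along which this convergence holds almost surely, and Kallenberg's criterion (Lemma~\ref{lem:kallenberg-criterion}) is applied $\omega$ by $\omega$.

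The only difference is your multivariate count step, which you can drop: Kallenberg's criterion needs only single sets, as you note yourself, and this is exactly what the paper uses. Its first justification, ``applying the lemma to linear combinations'', would not follow from the single-set lemma anyway. That lemma only controls counts over unions $\bigcup_{j\in J}R_j$, and such subset sums do not determine the joint law.
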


\begin{proof} 
 By subsequence convergence criterion, it suffices to show for an arbitrary subsequence $\mathbb{N}' \subset \mathbb{Z}_{\ge 0}$ there is a   further
subsequence  $\mathbb{N}'' \subset \mathbb{N}'$ such that for almost every $\omega$. 
\begin{equation}
  \label{eq-conv-1}
    \mathcal{L}_{\Xi_{n,q} \mid \mathcal I } (\omega) \, \xrightarrow[n \in \mathbb{N}'']{  n \to \infty} \,    \mathcal{L}_{\Xi_{q} \mid \mathcal W_q }  (\omega) \quad  \text{ in }\ \mathcal{P}(\Mloc((-\infty,+\infty])).
\end{equation}  

For each fixed $\omega$, since  
$\mathcal{L}_{\Xi_{q} \mid \mathcal W_q }(\omega)$ is the law of a PPP with Lebesgue continuous intensity measure,    it is 
simple  and assigns no mass to $\partial R$ for every
$R\in\mathscr R_{(-\infty,\infty]}$. 
By Kallenberg's criterion  Lemma~\ref{lem:kallenberg-criterion},     \eqref{eq-conv-1} follows directly once we show that for almost every $\omega$,  
\begin{equation}
  \label{eq-conv-2}
   \mathcal{L}_{\Xi_{n,q} \mid \mathcal I } (\omega, \{\mu:\mu(R)=\ell\})
   \, \xrightarrow[n \in \mathbb{N}'']{  n \to \infty} \,    \mathcal{L}_{\Xi_{q} \mid \mathcal W_q } (\omega,\{\mu:\mu(R)=\ell\}) 
\end{equation} 
 for every integer  
 $ \ell \ge 0$ and $ R\in\mathscr R_{(-\infty,\infty]}$.  
By definition, we have 
\begin{align}
   \mathcal{L}_{\Xi_{n,q} \mid \mathcal I } (\omega, \{\mu:\mu(R)=\ell\})  &=\P(\Xi_{n,q}(R) = \ell   \mid \mathcal{I} ) (\omega) \\
    \mathcal{L}_{\Xi_{q} \mid   \mathcal{W}_q} (\omega, \{\mu:\mu(R)=\ell\})  &=\P( \Poi( \tfrac{1}{q!}\mathcal{W}_q(\omega)  {\textstyle \int}_R e^{-x} \dif x    ) = \ell  \big).
\end{align}  
Lemma~\ref{prop:ring-count-Poisson} and
continuity of the Poisson  distribution in its parameter give 
\begin{align*}
& \frac{1}{2} \sum_{\ell \ge 0} | \P(\Xi_{n,q}(R) = \ell \mid \mathcal{I} ) - \P(\Xi_{q}(R) = \ell   \mid \mathcal{W}_q ) | \\ 
 &\ \leq
 d_{\mathrm{TV},\mathcal I}\big(
   \Xi_{n,q}(R), \Poi(\Lambda_{n,q}(R))
  \big)
 + d_{\mathrm{TV},\mathcal{I}} \big( \Poi(\Lambda_{n,q}(R)), \Poi( \tfrac{1}{q!}\mathcal{W}_q  {\textstyle \int}_R e^{-x} \dif x    )  \big)
 \xrightarrow[n\to\infty]{\P}0.
\end{align*} 
Since
$\mathscr R_{(-\infty,\infty]}$ is countable, the preceding convergence
and a diagonal argument yield a  deterministic subsequence $\mathbb{N}''$ of $\mathbb{N}'$ such that,  
almost surely 
\[  \sum_{\ell \ge 0} | \P(\Xi_{n,q}(R) = \ell \mid \mathcal{I} ) - \P(\Xi_{q}(R) = \ell   \mid \mathcal{W}_q ) |  \, \xrightarrow[n \in \mathbb{N}'']{  n \to \infty} \,     0   \]
for every $R\in\mathscr R_{(-\infty,\infty]}$.  Thus \eqref{eq-conv-2} follows. This completes the proof.
\end{proof}

  \smallskip
\begin{proof}[Proof of Corollary~\ref{thm:three-regimes} for
$q\leq\theta_*$]  
Let $q=r+1$.  The atoms of $\Xi_{n,q}$, listed in decreasing order with
multiplicities, are
$\kappa(q)D_{n,r}^{(j)}-m_{n,q}$,
$1\leq j\leq\binom{n}{q}$.
Since $\Xi_q$ is simple, has no atom at $+\infty$, and has infinitely many
atoms, Proposition~\ref{prop:point-process} and the continuous-mapping
theorem for the first $k+1$ ordered atoms give
\[
 \big(
  \kappa(q)D_{n,r}^*-m_{n,q},
  (\kappa(q)(D_{n,r}^{(j)}-D_{n,r}^{(j+1)}))_{j=1}^k
 \big)
 \xrightarrow[n\to\infty]{\mathrm{law}}
 \big(
  \mathsf A_q^{(1)},(\mathsf\Delta_{q,j})_{j=1}^k
 \big).
\] 
This is the asserted joint convergence since $\gamma_q=\kappa(q)$ for
$q\leq\theta_*$.

It remains to identify the limit. Let $0<\Gamma_1<\Gamma_2<\cdots$ be the arrival times of a rate-one
Poisson process, independent of $\mathcal W_q$.  Then we have   
\begin{equation}\label{eq:poissonian-ordered-atoms}
 (\mathsf A_q^{(j)})_{j\geq1}
 \overset{\mathrm{law}}{=}
 \bigl(  \log(\mathcal W_q/q! ) -\log\Gamma_j \bigr)_{j\geq1}.
\end{equation} 
Since $-\log\Gamma_1$ is a standard Gumbel random variable independent of
$\mathcal W_q$, \eqref{eq:poissonian-ordered-atoms} gives  
\[
 \mathsf A_q^{(1)}
 \overset{\mathrm{law}}{=}
 G+\log(\mathcal W_q/q!).
\] 
This proves the first two cases
for the largest atom.

For the   gaps, set $\rho_j:=\Gamma_j/\Gamma_{j+1}$.  For every $k\geq1$,
conditional on $\Gamma_{k+1}$, the variables
$\Gamma_1/\Gamma_{k+1},\ldots,\Gamma_k/\Gamma_{k+1}$ are the order
statistics of $k$ independent uniform random variables on $(0,1)$.
By making change of variables, we obtain that  $(\rho_1,\ldots,\rho_k)$ has the joint density function 
\[
 \prod_{j=1}^k j\rho_j^{j-1},
 \qquad (\rho_1,\ldots,\rho_k)\in(0,1)^k.
\]
Thus the $\rho_j$ are independent with
$\rho_j\sim\operatorname{Beta}(j,1)$.  By
\eqref{eq:poissonian-ordered-atoms}, we get 
\[
 (\mathsf\Delta_{q,j})_{j=1}^{k}
 \overset{\mathrm{law}}{=}
\big(\log(\Gamma_{j+1}/\Gamma_j)
 =-\log\rho_j\big)_{j=1}^{k}.
\]
Hence the gaps are mutually independent and
$\mathsf\Delta_{q,j}\sim\operatorname{Exp}(j)$, as claimed.
\end{proof}

   \smallskip

\begin{proof}[Proof of Theorem~\ref{thm:intro-exact-q-clade-process}
for $q\leq\theta_*$]  
By \eqref{eq:pp-to-count}, the map
$\mu\mapsto(\mu((x,+\infty]))_{x\in\R}$ is continuous at every simple
point measure with no atom at $+\infty$.  Since $\Xi_q$ almost surely has
these properties, Proposition~\ref{prop:point-process} and the continuous
mapping theorem give 
\begin{equation}\label{eq-pp-to-count-2}
   \bigl(\Xi_{n,q}((x,+\infty])\bigr)_{x\in\R}
   \xrightarrow[n\to\infty]{\mathrm{law}}
   \bigl(\Xi_q((x,+\infty])\bigr)_{x\in\R} \quad \text{ in } \, D(\R,\mathbb Z_{\geq0}) .
\end{equation}

It remains to replace the number of unseparated $q$-sets by the number of
clades of size exactly $q$. 
Fix $a\geq1$. 
Note that the two counts $\Xi_{n,q}((x,+\infty])$ and $N_{n,q}(x)$ agree
for every $|x|\leq a$, unless $\Pi(t_{n,q}(-a))|_{[n]}$ contains a clade
of size at least $q+1$. 
Conditional on the interval fragmentation process  $\mathcal I$, the probability of this exceptional event
is at most
\[
 \min \bigl\{  n^{q+1} S_{q+1}(t_{n,q}(-a)) ,1
  \bigr\}\leq   \min \bigl\{   n^qS_q(t_{n,q}(-a)) n X_1(t_{n,q}(-a)) , 1 \bigr\} 
 \xrightarrow[n\to\infty]{\P}0,
\]
where we used $S_{q+1}(t)\leq X_1(t)S_q(t)$ and
Lemma~\ref{lem:point-process-inputs}. Thus, by dominated convergence,
\[
 \P(N_{n,q}(x)=\Xi_{n,q}((x,+\infty])
 \text{ for every }|x|\leq a)
 \xrightarrow[n\to\infty]{}1.
\]
Combining with \eqref{eq-pp-to-count-2},  this proves the asserted
weak convergence for $(N_{n,q}(x))_{x\in\R}$.

Finally, conditional on $\mathcal W_q$,   $\Xi_q$ is a
Poisson point process with intensity $(\mathcal W_q/q!)e^{-x}\,\dif x$.
Given that there are $m$ atoms in $(x,+\infty]$, each lies in
$(x+t,+\infty]$ independently with probability $e^{-t}$ for $t>0$.
This gives the claimed conditional pure-death description.
\end{proof}

\section{Clustered limits above the freezing threshold}
\label{sec:frozen}

Throughout this section, we fix an integer $q>\theta_*$ and assume the
nonlattice condition \eqref{non-lattice-cond}, in addition to the standing
assumptions.  We prove the frozen cases of
Theorem~\ref{thm:intro-separation-point-process},
Corollary~\ref{thm:three-regimes}, and
Theorem~\ref{thm:intro-exact-q-clade-process}, in this order.

At the time $t_{n,q}(x)$, the largest fragments have masses of order
$n^{-1}$.  Their sampled occupancies therefore remain nondegenerate, and
one fragment may produce several late separation times.  This is why the
Poisson approximation for individual $q$-sets used in
Section~\ref{sec:poisson} no longer applies.  We instead use the classical
Poissonization trick: replacing the fixed sample size by an independent
Poisson variable makes the occupancies of
distinct fragments independent.  See Joseph~\cite[Section~3.2]{Joseph}
and the references therein.

\subsection{Extremal fragments at the frozen scale}

Recall the sampling window $t_{n,q}(x)$ in
\eqref{eq:unified-sampling-window}. Since it is indeed \emph{independent} of $q$ when $q > \theta_*$, we write $t_n(x)$ in this section.
For each $x\in\R$, define the
\emph{fragment extremal process} at time $t_{n}(x)$ by
\[
 \mathcal{E}^{\mathrm{fr}}_{n,x}:=\sum_{i\geq1}\delta_{nX_i(t_{n}(x))}.
\] 
We view it as a point process in  $\Mloc((0,\infty])$. 
The object in this section is to show the weak convergence of this {fragment extremal process}, which indeed is implied by  Lemma~\ref{lem:brw-inputs} (\ref{input:Madaule}).

We first describe its limit. Recall 
$v_*:=\kappa_*/\theta_*=\kappa'(\theta_*)$.   Let $\mathcal E^V$ be the limiting point process introduced in
Lemma~\ref{lem:brw-inputs}\textup{(\ref{input:Madaule})}, jointly defined with $Z$.   For
$x\in\mathbb R$, put
\[
 f_x(y):=
 v_*^{3/(2\theta_*)}
 \exp \bigl(  {[y-x]}/{\theta_*} \bigr) \ \quad  y\in\mathbb R,
\]
and let $ \mathcal E_x^{\mathrm{fr}}$ to be the pushforward measure of $\mathcal{E}^V$ under $f_x$, i.e.,
\begin{equation}\label{eq:mass-process-definition}
 \mathcal E_x^{\mathrm{fr}}
 :=\mathcal E^V\circ f_x^{-1}
 =\sum_{i\geq1}\delta_{\lambda_i(x)} 
\end{equation}
where $ 
 \infty>\lambda_1(x)\geq\lambda_2(x)\geq\cdots$
are the atoms of $\mathcal E_x^{\mathrm{fr}}$, listed with
multiplicity.  Here $(\mathcal{E}^{\mathrm{fr}}_x)_{x \in \mathbb{R}}$  are coupled together and note that   $
 \lambda_i(x+h)
 =e^{-h/\theta_*}\lambda_i(x),
$ for $ i\geq1$, $ x,h\in\mathbb R. $

For each $x$ the law of $\mathcal{E}^{\mathrm{fr}}_{x}$ can be   equivalently described as follows: 
Conditional on $Z$, let $\sum_{j \ge 1}\delta_{y_j}$ be a Poisson point process on
$(0,\infty]$ with intensity
$C_V\theta_*v_*^{3/2}e^{-x}
Z  y^{-\theta_*-1}\,\dif y$. Let $(\mathcal{D}_{j}=\sum_{k \ge 1} \delta_{d_{j,k}})_{j \ge 1}$ be i.i.d. copies of $\mathcal{D}=\sum_{k \ge 1} \delta_{d_k}$, and independent of $\sum_{j \ge 1}\delta_{y_j}$. Then, we have 
\begin{equation}\label{eq:cluster-mass-representation}
 \mathcal E^{\mathrm{fr}}_x
 \overset{\mathrm{law}}=
 \sum_{j,k}
 \delta_{y_j \exp(d_{j,k}/\theta_*)}.
\end{equation}  

\begin{lemma} \label{lem:extremal-masses}
Fix $x\in\R$ . Let  $g$ be a  measurable function on $(0,\infty)$
satisfying $ \sup_{y \in (0,1]} y^{-\beta} |g(y)| <\infty $ for some $\beta>\theta_{*}$.
Then,  almost surely
$ \langle |g|,  \mathcal E^{\mathrm{fr}}_x \rangle <\infty $.
If in addition $g$ is continuous, then
\begin{equation}\label{eq:mass-process-limit}
 \bigl( 
  \mathcal{E}^{\mathrm{fr}}_{n,x}, Z(t_{n}(x)), \langle g,  \mathcal E^{\mathrm{fr}}_{n,x} \rangle
  \bigr)
 \xrightarrow[n\to\infty]{\mathrm{law}}
\bigl(  
  \mathcal{E}^{\mathrm{fr}}_x,Z,  \langle g,  \mathcal E^{\mathrm{fr}}_{x} \rangle
 \bigr) \quad \text{ in } \Mloc((0,\infty]) \times \R^2 .
\end{equation}   
\end{lemma}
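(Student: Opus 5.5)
The plan is to transfer Lemma~\ref{lem:brw-inputs}\,(\ref{input:Madaule}) through the deterministic change of variables linking $nX_i(t_n(x))$ to the atoms of $\mathcal E^V_t$. With $t=t_n(x)$ we have $nX_i(t)=\exp\{\log n-(V_i(t)+\kappa_*t)/\theta_*\}$. By definition of $t_n(x)$, $\kappa_*t=\theta_*\log n-\tfrac32\log\log n+x$. Writing $y=\tfrac32\log t-V_i(t)$ for an atom of $\mathcal E^V_t$, this gives
\[
 nX_i(t_n(x))=\exp\Bigl(\frac{y-x}{\theta_*}\Bigr)\exp\Bigl(\frac{\tfrac32\log\log n-\tfrac32\log t}{\theta_*}\Bigr).
\]
Since $t/\log n\to\theta_*/\kappa_*=1/v_*$, the last factor equals $\epsilon_n\,v_*^{3/(2\theta_*)}$ with a deterministic $\epsilon_n\to1$. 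Hence $\mathcal E^{\mathrm{fr}}_{n,x}=\mathcal E^V_{t}\circ f_{x,n}^{-1}$, where $f_{x,n}=\epsilon_n f_x$.

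\textbf{Finiteness.} Suppose $|g(y)|\le Cy^{\beta}$ on $(0,1]$ with $\beta>\theta_*$. Then $|g|\circ f_x(y)\le C' e^{\beta y/\theta_*}$ on $\{f_x\le 1\}=\{y\le y_0\}$, and $\beta/\theta_*>1$. So $\langle |g|\mathbf 1_{(0,1]},\mathcal E^{\mathrm{fr}}_x\rangle\le C'\langle\exp_{\beta/\theta_*},\mathcal E^V\rangle<\infty$ a.s. by the last sentence of Lemma~\ref{lem:brw-inputs}(\ref{input:Madaule}). The part on $(1,\infty)$ involves only the finitely many atoms of $\mathcal E^V$ above $y_0$ (local finiteness on $(-\infty,\infty]$, with no atom at $+\infty$ since $\mathcal E^V$ is a DPPP with rightmost decoration atom $0$). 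Here $|g|$ must be finite at each atom; if $g$ is unbounded near $\infty$ this still holds pointwise, because $g$ is finite-valued.

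\textbf{Convergence.} First, pushforward by $f_{x,n}\to f_x$ uniformly on compacts of $(-\infty,\infty]$ (extended by $f_x(+\infty)=\infty$), and $f_x$ is a homeomorphism onto $(0,\infty]$. Together with $\mathcal E^V_t\to\mathcal E^V$ jointly with $Z(t)\to Z$, a generalized continuous-mapping argument gives $(\mathcal E^{\mathrm{fr}}_{n,x},Z(t_n(x)))\Rightarrow(\mathcal E^{\mathrm{fr}}_x,Z)$ in $\Mloc((0,\infty])\times\R$.

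For the functional, split $g=g\chi+g(1-\chi)$ with a continuous cutoff $\chi$ equal to $1$ on $(0,\delta]$ and $0$ on $[2\delta,\infty)$. The part $g(1-\chi)$ is continuous with support bounded away from $0$. It has compact support in $(0,\infty]$ provided $g$ has a limit at $\infty$ or is bounded there; otherwise I handle the large-atom part directly. By the vague convergence and the finitely-many-atoms description recalled in Section~\ref{eq:point-process}, its integral converges jointly, since atoms of $\mathcal E^{\mathrm{fr}}_{n,x}$ above $\delta$ converge individually and the limit a.s. has no atom at $\delta$.

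The small-mass part is the main obstacle. I plan to bound it via the weighted convergence in (\ref{input:Madaule}) with $\beta_1=\beta/\theta_*>1$: dominate $|g\chi|\circ f_{x,n}(y)\le C''e^{\beta_1 y}\mathbf 1_{\{y\le y_\delta+o(1)\}}$. Then I will write $\langle |g\chi|,\mathcal E^{\mathrm{fr}}_{n,x}\rangle\le C''\langle \exp_{\beta_1}\tilde\chi_\delta,\mathcal E^V_t\rangle$, where $\tilde\chi_\delta$ is a BUC cutoff. The weighted convergence, applied with $g_{j,t}=g_j=\tilde\chi_\delta$, shows this converges in law to $C''\langle\exp_{\beta_1}\tilde\chi_\delta,\mathcal E^V\rangle$. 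This tends to $0$ a.s. as $\delta\to0$ by dominated convergence, using the finiteness above. A standard approximation lemma (Billingsley Theorem~3.2: uniform-in-$n$ smallness in probability of the remainder as $\delta\downarrow0$) then yields joint convergence of $\langle g,\mathcal E^{\mathrm{fr}}_{n,x}\rangle$.

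Finally, I identify the cluster representation \eqref{eq:cluster-mass-representation} directly from $\mathcal L(\mathcal E^V\mid Z)=\DPPP(C_VZe^{-x}\dif x,\mathcal D)$ via the map $y\mapsto v_*^{3/(2\theta_*)}e^{(y-x)/\theta_*}$. This is a routine change of intensity, giving the density $\propto y^{-\theta_*-1}$.
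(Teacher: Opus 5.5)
Your proof is correct, but you organize the convergence step differently from the paper.

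\textbf{The paper's route.} For bounded continuous $g$, the paper handles the whole functional at once. It writes $g\circ f_{n,x}=h_{n,x}\exp_\eta$, with $h_{n,x}(y)=e^{-\eta y}g(f_{n,x}(y))$ and $1<\eta<\beta/\theta_*$. It then checks that $h_x\in\mathrm{BUC}(\mathbb R)$, which is where boundedness of $g$ enters, and that $h_{n,x}(y)=e^{\eta b_n(x)}h_x(y+b_n(x))\to h_x$ uniformly. Finally it applies Lemma~\ref{lem:brw-inputs}\textup{(\ref{input:Madaule})} with the $t$-dependent test functions $g_{j,t}$; this is exactly why that lemma allows $g_{j,t}\to g_j$ uniformly. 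Unbounded $g$ is handled afterwards by a cutoff $\chi_M$ at large masses.

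\textbf{Your route.} You cut off at small masses instead.
\begin{itemize}
\item The piece $g(1-\chi)$ is handled by atom matching for vague convergence on the compact set $[\delta,\infty]\subset(0,\infty]$. This works for every continuous $g$, bounded or not, because the limit has no atom at $+\infty$.
\item The piece $g\chi$ is controlled uniformly in $n$ by domination with $\exp_{\beta/\theta_*}$. You then use Lemma~\ref{lem:brw-inputs}\textup{(\ref{input:Madaule})} for a fixed test function, together with the portmanteau theorem.
\item Billingsley's approximation theorem combines the two pieces.
\end{itemize}

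\textbf{What each buys.} Your version uses only a fixed-function tightness bound from the BRW input. It moves the $n$-dependence of $f_{x,n}=\epsilon_nf_x$ into the continuous-mapping step, and it needs no separate truncation at large masses. The cost is an extra approximation argument. The paper's version is shorter, but it relies on the uniform-perturbation clause of the input lemma and on the BUC check for $h_x$.

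\textbf{Two small remarks.} First, your aside that $g(1-\chi)$ has compact support in $(0,\infty]$ when $g$ is merely bounded at $\infty$ is not right: such a function need not extend continuously to $+\infty$. It is also unnecessary, since your atom-matching argument already covers every continuous $g$. Second, you can drop the requirement that the limit has no atom at $\delta$. The function $g(1-\chi)$ vanishes on $(0,\delta]$, so you may match atoms on $[\delta',\infty]$ for any $\delta'<\delta$ that is not an atom of the limit.
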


\begin{proof} 
 For each $x \in \mathbb{R}$,  define the map 
\[
 f_{n,x}(y):=   
 [ \tfrac{\log n}{t_n(x)} ]^{3/(2\theta_*)}  \exp({[y-x]/\theta_*}) \ , \quad  y\in\mathbb R.
\]
Then we have 
$  nX_i(t_{n}(x)) = f_{n,x} (  \frac32\log t_{n}(x)-V_i  ( 
  t_{n}(x) )  ) $. 
Together with  
\eqref{eq:mass-process-definition},  we get  
\begin{equation}\label{eq:pushforward-id-1}
   \mathcal E^{\mathrm{fr}}_{n,x}
   =\mathcal E^V_{t_n(x)}\circ f_{n,x}^{-1}
   \quad \text{and} \quad
   \mathcal E^{\mathrm{fr}}_x=\mathcal E^V\circ f_x^{-1}.
\end{equation} 

Since $\beta/\theta_*>1$, Lemma~\ref{lem:brw-inputs}
\textup{(\ref{input:Madaule})} gives
 \[ 
 \int_{(0,1]}|g(y)|\,
 \mathcal E_x^{\mathrm{fr}}(\dif y)
 \lesssim \int_{(0,1]} y^{\beta}  \mathcal E^{\mathrm{fr}}_{ x}(\dif y) \lesssim   e^{-\beta x/\theta_*} 
 \langle\exp_{\beta/\theta_*},\mathcal E^V\rangle < \infty \ \text{ a.s..} \] 
  On the other hand, by \eqref{eq:cluster-mass-representation},  $\mathcal E_x^{\mathrm{fr}}$ has only finitely many atoms in $[1,\infty)$ and $\mathcal E_x^{\mathrm{fr}}(\{\infty\})=0$,   the
contribution of these atoms to
$\langle |g|,\mathcal E_x^{\mathrm{fr}}\rangle$ is finite. Thus we have $\P( 
 \langle |g|,  \mathcal E^{\mathrm{fr}}_x \rangle <\infty  )=1$.

Suppose now that $g$ is continuous.  We first prove
\eqref{eq:mass-process-limit} under the additional assumption that
$g$ is bounded. To use  
Lemma~\ref{lem:brw-inputs}~\textup{(\ref{input:Madaule})}, we  choose $1<\eta<\beta/\theta_*$ and set
\[
 h_{n,x}(y):=e^{-\eta y}g(f_{n,x}(y))
 \quad \text{and} \quad
 h_x(y):=e^{-\eta y}g(f_x(y)).
\]
The boundedness of $g$ and the bound $|g(y)|\leq K y^\beta$ imply that
$h_x(y)\to0$ as $y\to\pm\infty$. Thus
$h_x\in\mathrm{BUC}(\mathbb R)$.  Put
$
 b_n(x):=\frac32\log (
 \frac{\log n}{v_*t_n(x)}
 )$
Then $b_n(x)\to0$ and 
 $ h_{n,x}(y)
 =e^{\eta b_n(x)}h_x(y+b_n(x))$.
Consequently, $h_{n,x}\in\mathrm{BUC}(\mathbb R)$ and
$\|h_{n,x}-h_x\|_\infty\to0$. Applying
Lemma~\ref{lem:brw-inputs}~\textup{(\ref{input:Madaule})} to the function $g(f_{n,x}(y)) =  h_{n,x}(y) \exp_{\eta}(y)$   therefore yields
\[
 \Bigl(\mathcal E^V_{t_n(x)},Z(t_n(x)),
 \langle g\circ f_{n,x},\mathcal E^V_{t_n(x)}\rangle\Bigr)
 \xrightarrow[n\to\infty]{\mathrm{law}}
 \Bigl(\mathcal E^V,Z,
 \langle g\circ f_x,\mathcal E^V\rangle\Bigr).
\]
in $\Mloc((-\infty,\infty]) \times \R^2$ .
Combining this with the
pushforward identities, the fact $f_{n,x} \to f_x$ locally uniformly, \eqref{eq:pushforward-id-1} and the
continuous-mapping theorem proves \eqref{eq:mass-process-limit} when
$g$ is bounded.

For a general continuous $g$, choose a continuous function
$\chi_M:(0,\infty)\to[0,1]$ such that $\chi_M=1$ on $(0,M]$ and
$\chi_M=0$ on $[M+1,\infty)$, and set $g_M:=g\chi_M$.
Then $g_M$ is bounded and continuous and the preceding argument applies to $g_M$. 
Since 
$\mathcal E_{n,x}^{\mathrm{fr}}\Rightarrow
\mathcal E_x^{\mathrm{fr}}$ and 
  $\mathcal E_x^{\mathrm{fr}}(\{\infty\})=0$,  we have 
$ \P (
  \mathcal E_{n,x}^{\mathrm{fr}}((M,\infty])>0
 ) \to 0$ as $n \to \infty$ then $M \to \infty$. 
On the complementary event, it holds 
$\langle g_M,\mathcal E_{n,x}^{\mathrm{fr}}\rangle
 =\langle g,\mathcal E_{n,x}^{\mathrm{fr}}\rangle$.
The same argument applies to $\mathcal E_x^{\mathrm{fr}}$.
Letting $M\to\infty$ therefore proves
\eqref{eq:mass-process-limit} for every continuous $g$ satisfying the
assumption of the lemma. 
\end{proof}
 
We next record a lemma that will be used later. For $\lambda\geq0$, let
\begin{equation}\label{eq:occupancy-piq}
 \pi_q(\lambda):=\P(\Poi(\lambda)<q)
 =e^{-\lambda}\sum_{k=0}^{q-1}\frac{\lambda^k}{k!}.
\end{equation}
Recall $\mathcal{D}= \sum_{k \ge 1} \delta_{d_{k}}$. We further define
\begin{equation}\label{eq:frozen-constant}
 C_q^\star
 :=C_V\theta_*v_*^{3/2}
 \int_0^\infty
 \Bigl[  
  1-\E\prod_{k \ge 1}
  \pi_q \bigl( ye^{d_k/\theta_*} \bigr)  
  \Bigr]y^{-\theta_*-1}\,\dif y.
\end{equation}

\begin{lemma}\label{lem:Cq-finite}
The constant $C_q^\star$ belongs to $(0,\infty)$.
\end{lemma}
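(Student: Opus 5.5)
Write $\Phi(y):=1-\E\prod_{k\geq1}\pi_q(ye^{d_k/\theta_*})\in[0,1]$, so that $C_q^\star=C_V\theta_*v_*^{3/2}\int_0^\infty\Phi(y)\,y^{-\theta_*-1}\,\dif y$. The plan is to bound $\Phi$ separately near $0$ and near $\infty$, and to get positivity from the atom of $\mathcal D$ at zero.

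\emph{Positivity.} Since the rightmost atom of $\mathcal D$ is at $0$ almost surely, and every factor satisfies $\pi_q\in[0,1]$, we have
\[
\prod_k\pi_q(ye^{d_k/\theta_*})\leq\pi_q(y)<1
\quad\text{for every } y>0.
\]
Hence $\Phi(y)\geq1-\pi_q(y)=\P(\Poi(y)\geq q)>0$. The integrand is therefore strictly positive on $(0,\infty)$, which gives $C_q^\star>0$.

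\emph{Behaviour at $\infty$.} Use the trivial bound $\Phi\leq1$. Since $\int_1^\infty y^{-\theta_*-1}\dif y<\infty$ (recall $\theta_*>1$), this part of the integral is finite.

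\emph{Behaviour near $0$.} This is the main step. Use $1-\prod_k(1-u_k)\leq\min\{1,\sum_k u_k\}$ with $u_k=1-\pi_q(ye^{d_k/\theta_*})=\P(\Poi(ye^{d_k/\theta_*})\geq q)$. The elementary bound $\P(\Poi(\lambda)\geq q)\leq\lambda^q/q!$ gives
\[
\Phi(y)\leq\E\Bigl[\min\Bigl\{1,\tfrac{y^q}{q!}\,\langle\exp_{q/\theta_*},\mathcal D\rangle\Bigr\}\Bigr].
\]
Here $q/\theta_*>1$, so $L:=\langle\exp_{q/\theta_*},\mathcal D\rangle<\infty$ almost surely by Lemma~\ref{lem:brw-inputs}\,(\ref{input:Madaule}). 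We only know finiteness of $L$, not finiteness of a moment. So interchange the integral and expectation by Tonelli and compute the $y$-integral explicitly: for $c>0$, substituting $u=c^{1/q}y$ gives
\[
\int_0^\infty\min\{1,cy^q\}\,y^{-\theta_*-1}\,\dif y=K\,c^{\theta_*/q},
\qquad
K:=\int_0^\infty\min\{1,u^q\}\,u^{-\theta_*-1}\,\dif u .
\]
The constant $K$ is finite because $q>\theta_*>0$. Consequently
\[
\int_0^\infty\Phi(y)\,y^{-\theta_*-1}\,\dif y\leq K\,(q!)^{-\theta_*/q}\,\E\bigl[L^{\theta_*/q}\bigr].
\]

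\emph{Main obstacle.} It remains to show $\E[L^{\theta_*/q}]<\infty$, a fractional moment of order $\theta_*/q<1$. I would get it by comparing with the tail of the limit $\mathcal E^V$. For $s=C_VZe^{-x}$, conditionally on $Z$, the DPPP representation gives a Laplace identity
\[
\E\bigl[\exp(-\mu\langle\exp_{q/\theta_*},\mathcal E^V\rangle)\bigm| Z\bigr]
=\exp\Bigl(-C_VZ\int\E\bigl[1-e^{-\mu e^{(q/\theta_*)x}L}\bigr]e^{-x}\,\dif x\Bigr).
\]
Substituting $v=e^{(q/\theta_*)x}$, the inner integral equals a constant times $\mu^{\theta_*/q}\,\E[L^{\theta_*/q}]$, with $\int_0^\infty(1-e^{-w})w^{-\theta_*/q-1}\dif w<\infty$.

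Since $Z>0$ and $\langle\exp_{q/\theta_*},\mathcal E^V\rangle<\infty$ almost surely, the left side is positive. This forces $\E[L^{\theta_*/q}]<\infty$. Indeed, if $\E[L^{\theta_*/q}]=\infty$, the exponent would be $-\infty$ and the Laplace transform would vanish, contradicting almost sure finiteness.

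Combining the three parts gives $0<C_q^\star<\infty$.
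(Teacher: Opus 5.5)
Your proof is correct. Your positivity argument is the same as the paper's, but you prove finiteness by a different route.

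\emph{The paper's route.} The paper attaches independent $\Poi(\lambda_k(x))$ marks to the atoms of $\mathcal E^{\mathrm{fr}}_x$. It then uses $\int\lambda^q\,\mathcal E^{\mathrm{fr}}_x(\dif\lambda)<\infty$ (Lemma~\ref{lem:extremal-masses}) and Borel--Cantelli to see that only finitely many atoms receive a mark $\geq q$. Applying the marking theorem to the cluster representation \eqref{eq:cluster-mass-representation}, it identifies the leaders of clusters containing such an atom as a Poisson process of total intensity $C_q^\star Ze^{-x}$. That process has a.s.\ finitely many points and $Z\in(0,\infty)$, so $C_q^\star$ must be finite.

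\emph{Your route.} You dominate the integrand by $\E[\min\{1,y^qL/q!\}]$, where $L=\langle\exp_{q/\theta_*},\mathcal D\rangle$. Scaling then reduces everything to the fractional moment $\E[L^{\theta_*/q}]$. You extract this moment from the conditional Laplace functional of $\langle\exp_{q/\theta_*},\mathcal E^V\rangle$, which, conditionally on $Z$, is a positive $(\theta_*/q)$-stable variable with Laplace exponent proportional to $Z\,\E[L^{\theta_*/q}]\,\mu^{\theta_*/q}$.

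\emph{Comparison.} Both arguments rest on the same input: a.s.\ finiteness of $\langle\exp_\beta,\mathcal E^V\rangle$ for $\beta>1$. Both are instances of the principle that a Poisson functional can be a.s.\ finite only if the corresponding intensity integral is finite. The paper's version works directly with the exact integrand and needs no computation. Yours is more quantitative: it gives the explicit bound
\[
C_q^\star\leq C_V\theta_*v_*^{3/2}\Bigl(\tfrac1{q-\theta_*}+\tfrac1{\theta_*}\Bigr)(q!)^{-\theta_*/q}\,\E\bigl[L^{\theta_*/q}\bigr],
\]
and, as a by-product, $\E[\langle\exp_\beta,\mathcal D\rangle^{1/\beta}]<\infty$ for every $\beta>1$.

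One minor point: your separate treatment of $y\to\infty$ is redundant, since the $\min\{1,\cdot\}$ bound already covers all $y>0$.
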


\begin{proof}
Write $\mathcal E^{\mathrm{fr}}_x=\sum_{k\geq1}\delta_{\lambda_k(x)}$ and,
conditional on $\mathcal E^{\mathrm{fr}}_x$, attach independent variables
$\mathsf N_k\sim\Poi(\lambda_k(x))$. Since $q>\theta_*$,
Lemma~\ref{lem:extremal-masses} gives, almost surely,
\[
 \sum_{k\geq1}
 \P(\mathsf N_k\geq q\mid\mathcal E^{\mathrm{fr}}_x)
 \leq\frac1{q!}\int_0^\infty \lambda^q
 \mathcal E^{\mathrm{fr}}_x(\dif \lambda)  <\infty.
\]
The   Borel--Cantelli lemma yields that only finitely many atoms
have $\mathsf N_k\geq q$ almost surely. 

Set
$
 p_q(y):=1-\E\prod_{d\in\mathcal D}\pi_q(ye^{d/\theta_*}).$
By \eqref{eq:cluster-mass-representation} and the independent marking
theorem, conditional on $Z$, the leaders $y_j$ whose clusters contain an atom
with mark at least $q$ form a Poisson point process with intensity
\[
 C_V\theta_*v_*^{3/2}e^{-x}Zp_q(y)y^{-\theta_*-1}\,\dif y.
\]
Its total intensity is $C_q^\star Ze^{-x}$. Its number of points is bounded
by the number of marked atoms in $\mathcal E^{\mathrm{fr}}_x$   and is therefore finite almost surely.
Since $Z\in(0,\infty)$  we conclude that $C_q^\star<\infty$.

Finally, since $\mathcal D$ contains an atom at zero almost surely and 
$0\leq\pi_q\leq1$, we have 
\[
 \prod_{d\in\mathcal D}\pi_q(ye^{d/\theta_*})\leq\pi_q(y),
 \qquad
 p_q(y)\geq1-\pi_q(y)>0.
\]
It follows from \eqref{eq:frozen-constant} that $C_q^\star>0$.
\end{proof}

\subsection{Separation-time extremal process} This subsection is devoted to the proof of the following Proposition, which implies  
Theorem~\ref{thm:intro-separation-point-process} for the case $q>\theta_*$.

\begin{proposition} 
\label{thm:frozen-separation-process}
There exists a point process $\Xi_q$ on $(-\infty,\infty]$ with $\Xi_q(\{+\infty\})=0$ almost surely, jointly defined with $Z$,  such that  
\begin{equation}\label{eq:frozen-point-process-limit}
 (\Xi_{n,q},Z)
 \xrightarrow[n\to\infty]{\mathrm{law}}
 (\Xi_q,Z)
 \quad\text{in }\Mloc((-\infty,+\infty])\times\R.
\end{equation} 
The law of the limit $(\Xi_q,Z)$ is described as follows. There is a point process $\mathcal D_q^{\star}$ on
$(-\infty,0]$,  whose rightmost atom is at zero almost
surely, such that 
\begin{equation}\label{eq:frozen-SDPPP}
 \mathcal{L}_{ \Xi_q \mid Z }
= \DPPP(
  C_q^\star Z e^{-x}\,\dif x,  \mathcal D_q^{\star}
 ).
\end{equation}   
\end{proposition}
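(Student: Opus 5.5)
The plan follows the heuristic \eqref{eq-Xi-n-q-inro}--\eqref{eq-Xi-q-a-intro}. I first build, for each fixed $a\in\R$, a limit $\Xi_{q,a}$ of the restriction $\Xi_{n,q}\res{(a,+\infty]}$. I then glue these over $a$. Finally I identify the law of the glued process via exp-$1$-stability and Lemma~\ref{lem:maillard-exp-stable}.

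\emph{Building $\Xi_{q,a}$.} Fix $a$ and condition on $\mathcal I$ up to time $t_n(a)$. By the fragmentation property, the separation times $T_A$ with $T_A>t_n(a)$ are determined by independent fragmentations run inside each interval component $I_i(t_n(a))$. Each such fragmentation acts on the labels $\{j\le n:U_j\in I_i(t_n(a))\}$, which are multinomially distributed given the masses $nX_i$.

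I will Poissonize: replace $n$ by $\Poi(n)$ labels. The occupancies then become independent $\Poi(nX_i(t_n(a)))$. De-Poissonization is handled by monotone coupling between $\Poi(n\pm n^{2/3})$ and $n$. The error is controlled because the expected number of $q$-sets in $(a,\infty]$ involving the extra labels is at most $n^{-1/3}\cdot n^qS_q(t_n(a))$, which is $O_\P(n^{-1/3})$. Here I use that, for $q>\theta_*$, $n^qS_q(t_n(a))=\langle y^q,\mathcal E^{\mathrm{fr}}_{n,a}\rangle$ is tight by Lemma~\ref{lem:extremal-masses}.

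Let $\mathcal R_{q,\lambda}$ denote the rescaled ($\times\kappa_*$) separation-time point process of a fragmentation started from $\Poi(\lambda)$ labels, i.e.\ the separation times of its $q$-subsets. Then the Poissonized $\Xi_{n,q}\res{(a,\infty]}$ equals $\sum_i\vartheta_a\mathcal R^{(i)}_{q,nX_i(t_n(a))}$ exactly. The map $\lambda\mapsto\mathrm{Law}(\mathcal R_{q,\lambda})$ is continuous. So I will show that $\mu\mapsto\mathrm{Law}\bigl(\sum_{\lambda\in\mu}\vartheta_a\mathcal R_{q,\lambda}\bigr)$ is continuous at $\mathcal E^{\mathrm{fr}}_a$, using truncation. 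Atoms with $\lambda\le\varepsilon$ contribute a nonempty process with probability at most $\sum\lambda^q/q!$ over small atoms. By Lemma~\ref{lem:extremal-masses} applied with $g(y)=y^q\ind{y\le\varepsilon}$ (smoothed), this is small uniformly in $n$ as $\varepsilon\to0$, since $q>\theta_*$. The finitely many atoms in $(\varepsilon,\infty)$ converge jointly with $Z(t_n(a))$ by \eqref{eq:mass-process-limit}.

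This gives $(\Xi_{n,q}\res{(a,\infty]},Z)\Rightarrow(\Xi_{q,a},Z)$ with $\Xi_{q,a}:=\sum_i\vartheta_a\mathcal R^{(i)}_{q,\lambda_i(a)}$. Its total mass is a.s.\ finite, by the same Borel--Cantelli count as in Lemma~\ref{lem:Cq-finite}. Consistency $\Xi_{q,a}\res{(b,\infty]}\overset{\mathrm{law}}=\Xi_{q,b}$ for $a<b$ follows from the prelimit identity. Kolmogorov extension (or Kallenberg's criterion Lemma~\ref{lem:kallenberg-criterion}, applied through restrictions) then yields $\Xi_q$ and \eqref{eq:frozen-point-process-limit}. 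Here I use that vague convergence on $(-\infty,\infty]$ is tested on sets bounded below. Care is needed since $\Xi_q$ is not simple, so I will test via Laplace functionals of $f\in C_c$ supported in $(a,\infty]$ rather than Lemma~\ref{lem:kallenberg-criterion}.

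\emph{Identifying the law.} Conditionally on $Z$, I will show exp-$1$-stability of $\Xi_q$ after removing the factor $Z$. Lemma~\ref{lem:brw-inputs}\ref{input:Madaule} gives $\mathcal E^V$ as $\DPPP(C_VZe^{-x}\dif x,\mathcal D)$. Hence $\Xi_{q,a}$ given $Z$ is a Poisson cluster process: the leaders $y_j$ form a PPP with intensity proportional to $Ze^{-a}y^{-\theta_*-1}\dif y$, and each cluster is built from $\mathcal R$'s over $\{y_je^{d_{j,k}/\theta_*}\}_k$. Superposing two independent copies with shifts $\alpha,\beta$, $e^\alpha+e^\beta=1$, reproduces a PPP of leaders with the same intensity, because $y^{-\theta_*-1}\dif y$ scales as $e^{-x}$ under $x=\theta_*\log y$-type reparametrization, combined with the shift $\lambda_i(x+h)=e^{-h/\theta_*}\lambda_i(x)$. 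Thus $\Xi_q^{(1)}:=\vartheta_{-\log Z}\Xi_q$ given $Z$ is exp-$1$-stable and nonzero.

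Lemma~\ref{lem:maillard-exp-stable} then gives a decoration $\mathcal D_q^\star$ with rightmost atom $0$ and some constant $C$. The constant is identified through $\P(\Xi_q((a,\infty])=0\mid Z)=\exp(-Ze^{-a}C)$. Computing this void probability from the cluster representation gives exactly $\exp\{-C_q^\star Ze^{-a}\}$ with $C_q^\star$ from \eqref{eq:frozen-constant}: a cluster is void on $(a,\infty]$ iff every fragment has fewer than $q$ labels, which has probability $\prod\pi_q$. By Lemma~\ref{lem:Cq-finite}, $C=C_q^\star\in(0,\infty)$. I expect the main obstacle to be the continuity and truncation step, in particular controlling uniformly in $n$ the small-fragment contributions together with de-Poissonization.
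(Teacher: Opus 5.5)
Your proposal is correct and follows the paper's proof essentially step for step. You Poissonize and condition on $\mathcal I(t_n(a))$, pass to the limit via Lemma~\ref{lem:extremal-masses}, de-Poissonize on $\{|\mathsf N_n-n|\le n^{2/3}\}$ using tightness of $n^qS_q(t_n(a))$, glue the consistent restrictions by Kolmogorov extension, and identify the law through exp-$1$-stability of $\vartheta_{-\log Z}\Xi_q$, Lemma~\ref{lem:maillard-exp-stable}, and the void probability $\exp\{-C_q^\star Ze^{-a}\}$. The only minor difference is the fixed-$a$ convergence: the paper gets it directly from the conditional Laplace functional $\exp\{-\langle\phi^{(a)}_{q,f},\mathcal E^{\mathrm{fr}}_{n,a}\rangle\}$, with $\phi^{(a)}_{q,f}=O(\lambda^q)$ near zero, which avoids your truncation step (the paper uses that truncation argument for the clade-count process instead).
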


Before giving the proof, let us introduce some notation. 
Given a   measure $\mu$ on $(-\infty,\infty]$ and a measurable subset $B$, we write $\mu\res{B}$ the measure obtained by restricting $\mu$ on $B$.  Henceforth, let $\mathsf{N}$ be a rate-one PPP on $[0,\infty)$ independent of the fragmentation processes $(\Pi, \mathcal{I})$. Set 
\begin{equation}\label{eq:residual-separation-process}
 \mathsf{N}_{\lambda}:= \mathsf{N}([0,\lambda]) \quad \text{ and }\quad \mathcal R_{q,\lambda}
 := \sum_{A\in\binom{[\mathsf{N}_{\lambda}]}q}\delta_{\kappa_*T_A}, \quad \text{ for } \lambda \ge 0 ,
\end{equation}
where the sum is understood to be zero measure when $\mathsf N_\lambda<q$. 
Moreover, for every $a \in \mathbb{R}$ and  non-negative measurable function $f$ on $(-\infty,\infty]$, define 
\begin{equation}\label{eq:frozen-mark-kernel}
 \Phi_{q,f}^{(a)}(\lambda)
 :=   \E \Bigl[\exp \bigl\{  
  -\langle f, \vartheta_a  \mathcal R_{q,\lambda} \rangle 
  \bigr\}   \Bigr] \in (0,1 ] \ , \quad  \lambda \ge 0. 
\end{equation} 
Finally let
$(\mathsf N^{(i)},\Pi^{(i)}, \mathcal{I}^{(i)})_{i\geq1}$ be i.i.d. copies of
$(\mathsf N,\Pi,\mathcal{I} )$. For each $i\geq1$,
define $(\mathcal R^{(i)}_{q,\lambda})_{\lambda\geq0}$ from
$(\mathsf N^{(i)},\Pi^{(i)})$ according to
\eqref{eq:residual-separation-process}.

\begin{proof}[Proof of Proposition \ref{thm:frozen-separation-process}]
The proof is divided into five steps. In Steps 1,2  we prove the weak convergence of the following Poissonized
extremal process
\[
 \Xi_{n,q}^{\mathrm{Poi}}
 :=\sum_{A\in\binom{[\mathsf{N}_{n}]}q}
 \delta_{\kappa_*T_A-m_{n,q}} \,,
\]
 and identify its weak limit $\Xi_q$.  
Step~3 removes the Poissonization. In Steps~4 and~5, we identify the conditional law of $\Xi_q$
given $Z$ and prove \eqref{eq:frozen-SDPPP}.

\medskip
\noindent \underline{{Step 1}.} 
We first claim that, for every fixed
$a\in\R$, conditionally on $\mathcal I(t_n(a))  $, it holds 
\begin{equation} 
 \mathcal{L} \Bigl\{ \Xi_{n,q}^{\mathrm{Poi}}\res{(a,+\infty]} \,\Bigm|\, \mathcal I(t_n(a)) 
  \Bigr\} 
\,\overset{\mathrm{law}}{=} \,     
 \mathcal{L} \Bigl\{  \,   \sum_{i\geq1}\vartheta_a
  \mathcal R^{(i)}_{q,nX_i(t_n(a))} \,\Bigm|\, \mathcal I(t_n(a)) 
  \Bigr\} 
 \label{eq:cond-distri}
\end{equation}
Indeed, given $\mathcal I(t_n(a))$, the Poisson
thinning property shows that the numbers of sampled labels in the interval
components $(I_i(t_n(a)))_{i\geq1}$ are independent Poisson  r.v.'s with respective means
$n|I_i(t_n(a))|=nX_i(t_n(a))$. 
Labels in the complement of $\mathcal I(t_n(a))$ are
singletons and hence do not contribute.  
By the 
fragmentation property, the subsequent evolutions inside distinct
interval components are conditionally independent and, after affine rescaling and
relabelling, have the laws of independent copies of the original
fragmentation.  Finally, for every residual separation time $T_A^{(i)}$,
$ \kappa_*\bigl(t_n(a)+T_A^{(i)}\bigr)-m_{n,q}
 =a+\kappa_*T_A^{(i)}$
which accounts for the shift $\vartheta_a$.

From \eqref{eq:cond-distri} it follows that for every $f\in C_{\mathrm c}^+((a,+\infty])$, writing
$\phi_{q,f}^{(a)}:=-\log\Phi_{q,f}^{(a)}$,
\begin{equation}\label{eq:poissonized-mark-product}
 \E \bigl[  \exp \bigl\{   -\langle f,\Xi_{n,q}^{\mathrm{Poi}}\rangle\bigr\}
  \,\bigm|\, \mathcal{I}  \big( t_n(a)\big)  \bigr]
 =\prod_{i\geq1}
   \Phi_{q,f}^{(a)}\bigl(nX_i(t_n(a))\bigr)
 =\exp\bigl\{-\langle\phi_{q,f}^{(a)},
   \mathcal E^{\mathrm{fr}}_{n,a}\rangle\bigr\}.
\end{equation}
  Applying Lemma \ref{lem:extremal-masses} and using $Z(t)\xrightarrow{a.s.} Z$, we obtain for every $h\in C_{\mathrm{b}}(\mathbb{R})$ and  $f\in C_{\mathrm c}^+((a,+\infty])$,
\begin{equation}
 \E \bigl[ h(Z) \exp \bigl\{   -\langle f,\Xi_{n,q}^{\mathrm{Poi}}\rangle\bigr\}
   \bigr]  \xrightarrow{n \to \infty} \E \bigl[ h(Z)  \exp\bigl\{-\langle\phi_{q,f}^{(a)},
   \mathcal E^{\mathrm{fr}}_{a}\rangle\bigr\}  
   \bigr] .\label{eq:lim-poi-ex}
\end{equation} 

We now verify the assumptions in Lemma \ref{lem:extremal-masses}. 
  First, the  dominated convergence theorem and continuity  in probability of Poisson process imply the continuity of 
$\Phi_{q,f}^{(a)}$. 
On
$\{\mathsf N_\lambda<q\}$, $\mathcal R_{q,\lambda}$ is the zero measure.  Hence
$1\geq\Phi_{q,f}^{(a)}(\lambda)\geq\pi_q(\lambda)>0$, which proves the continuity of 
$\phi_{q,f}^{(a)}$.  
For $0\leq\lambda\leq1$, we have
$1-\pi_q(\lambda)= \P( \mathsf{N}_{\lambda} \ge q )  \lesssim \lambda^q/q!$.
Since $\pi_q(\lambda)\geq\pi_q(1)$ and
$-\log x\leq(1-x)/x$, this yields
$-\log\pi_q(\lambda)\leq\lambda^q/(q!\pi_q(1))$. and hence  $\sup_{\lambda \in (0,1]}\lambda^{-q} \phi_{q,f}^{(a)}(\lambda) < \infty$.

\medskip
\noindent \underline{{Step 2}.}
We now construct the limiting point process from \eqref{eq:lim-poi-ex}. For $a \in \mathbb{R}$, we define a point process $ \Xi_{q,a}$ as follows:
Let 
$\mathcal E_a^{\mathrm{fr}}  =\sum_{i\geq1}\delta_{\lambda_i(a)} $, which is independent of  $\{(\mathcal R_{q,\lambda}^{(i)})_{\lambda\ge0}: i \ge 1\}$. Set  
\begin{equation}\label{eq:def-Xi-q-a}
   \Xi_{q,a}
   :=\sum_{i\geq1}\vartheta_a
   \mathcal R_{q,\lambda_i(a)}^{(i)}.
\end{equation} 
Then $ \Xi_{q,a}$ is almost surely a finite measure, since the identity
$\E[\mathcal R_{q,\lambda}([0,\infty])]=\E[ \binom{\mathsf{N}_{\lambda}}{q} ]  =\lambda^q/q!$ and
Lemma~\ref{lem:brw-inputs}~\textup{(\ref{input:Madaule})} give
$
 \E [
   \Xi_{q,a}((a,\infty])
  \mid \mathcal E_a^{\mathrm{fr}} ]
 =\frac1{q!}\int_0^\infty \lambda^q
 \mathcal E_a^{\mathrm{fr}}(\dif\lambda)<\infty$.  
Moreover, for every $f\in C_{\mathrm c}^+((a,\infty])$, conditional
independence gives
\[
 \E  \bigl[  
 \exp\bigl\{-\langle f,
  \Xi_{q,a}\rangle\bigr\}  
   \,\bigm|\, 
  \mathcal E_a^{\mathrm{fr}}
  \bigr] 
 =
 \prod_{i\geq1}\Phi_{q,f}^{(a)}(\lambda_i(a))
 =
 \exp\bigl\{
  -\langle\phi_{q,f}^{(a)},
   \mathcal E_a^{\mathrm{fr}}\rangle
 \bigr\}.
\]
 The standard criterion of point-process convergence through
Laplace functionals and \eqref{eq:lim-poi-ex} give  
\begin{equation}
   \bigl(\Xi_{n,q}^{\mathrm{Poi}}\!\restriction_{(a,+\infty]},Z\bigr)
 \xrightarrow[n\to\infty]{\mathrm{law}}
 \bigl( \Xi_{q,a} ,Z\bigr) \quad \text{ in } \Mloc((a,\infty]) \times \R. \label{eq:conv-re-a}
\end{equation}  
Furthermore, we claim that \eqref{eq:conv-re-a} implies  
$(\Xi_{q,a}\res{(b,\infty]},Z) \overset{\mathrm{law}}{=} (\Xi_{q,b},Z)$.\footnote{This is  consistency in law only: the point processes
$(\Xi_{q,a})_{a\in\mathbb R}$ defined by \eqref{eq:def-Xi-q-a} on their
original common probability space need not be pathwise consistent. }
Indeed, by \eqref{eq:def-Xi-q-a} and the fact that
$\mathcal R_{q,\lambda}(\{s\})=0$ almost surely for each fixed
$s\geq0$, we have $\Xi_{q,a}(\{b\})=0$ almost surely whenever $a<b$.
The assertion then follows from \eqref{eq:conv-re-a} and the continuous
mapping theorem. 

Thanks to   Kolmogorov's extension theorem, the consistency allows us to
realize the pairs
$(\Xi_{q,-k},Z)_{k\geq1}$ on a common probability space in such a way 
that  
$
 \Xi_{q,-(k+1)}\res{(-k,+\infty]}
 =\Xi_{q,-k} $ almost surely
for every $k\geq1$.
We may therefore define a point measure $\Xi_q$ on
$(-\infty,+\infty]$ by requiring
\[
 \Xi_q\res{(-k,+\infty]} :=\Xi_{q,-k},
 \qquad k\geq1.
\]
 In particular, $\Xi_q$ is locally finite point process on $(-\infty,\infty]$ with $\Xi_q(\{\infty\})=0$ and  satisfies 
 \[ \bigl(\Xi_{n,q}^{\mathrm{Poi}} ,Z\bigr)
 \xrightarrow[n\to\infty]{\mathrm{law}}
 \bigl( \Xi_{q} ,Z\bigr) \quad \text{ in } \Mloc((-\infty,\infty]) \times \R.\]

\medskip
\noindent \underline{{Step 3}.}
We next remove the Poissonization.  On the event $\{|\mathsf N_n-n|\leq n^{2/3}\}$,
the symmetric difference between  the collections of $q$-subsets of $[\mathsf{N}_n]$ and $[n]$ 
 has cardinality $|\binom{\mathsf{N}_{n}}{q} - \binom{n}{q} |\lesssim_q n^{q-1}|\mathsf N_n-n|$.  
Each $q$-subset $A$ in the symmetric difference of
$\binom{[\mathsf N_n]}q$ and $\binom{[n]}q$ contributes an atom to
exactly one of the two point processes. This atom is located at $\kappa_*T_A-m_{n,q}$ and belongs to
$(a,+\infty]$ if and only if $T_A>t_n(a)$.  The union bound and \eqref{eq:conditional-unseparated-probability} yield, on the event $\{|\mathsf N_n-n|\leq n^{2/3}\}$,
\[
 \P\Bigl(
  \Xi_{n,q}^{\mathrm{Poi}}\res{(a,+\infty]}
  \neq\Xi_{n,q}\res{(a,+\infty]}
  \Bigm|\mathcal I,\mathsf{N}_{n}\Bigr)
 \lesssim_q n^{-1/3}n^qS_q(t_n(a))
 =  n^{-1/3}\int_0^\infty \lambda^q
 \mathcal E_{n,a}^{\mathrm{fr}}(\dif\lambda).
\]

Lemma~\ref{lem:extremal-masses}, applied with $g(\lambda)=\lambda^q$,
shows that the last integral is tight.  Moreover, Chebyshev's
inequality gives
$\P(|\mathsf N_n-n|>n^{2/3})\to0$.  Thus we obtain $ \P (
  \Xi_{n,q}^{\mathrm{Poi}}\res{(a,+\infty]}
  \neq\Xi_{n,q}\res{(a,+\infty]}
) \to 0$ for every $a \in \mathbb{R}$. 
Combining this  with Step~2 yields, for every
fixed $a\in\R$,
\[
 \bigl(\Xi_{n,q}\res{(a,+\infty]},Z\bigr)
 \xrightarrow[n\to\infty]{\mathrm{law}}
 \bigl(\Xi_q\res{(a,+\infty]},Z\bigr).
\]
Taking $a=-k$, $k\geq1$, and using the definition of the local vague
topology proves \eqref{eq:frozen-point-process-limit}.

\medskip
\noindent \underline{{Step 4}.}
It remains to identify the   law of $\Xi_q$.  Fix $a\in\R$. 
Plugging the construction \eqref{eq:cluster-mass-representation} of $\mathcal{E}^{\mathrm{fr}}_a$   into 
\eqref{eq:def-Xi-q-a}, we  rewrite the law $\Xi_q\res{(a,+\infty]}$ as $ \sum_{j,k}\vartheta_a
   \mathcal R^{(j,k)}_{q, y_j \exp(d_{j,k}/\theta_*) }.$ Here $(\mathcal R_{q, \lambda}^{(j,k)})_{j,k \ge 1}$ are i.i.d. copies of $\mathcal R_{q, \lambda} $ independent of $\sum_{j} \delta_{y_j}$ and $\mathcal{D}_j $. Write  $\mathcal D=\sum_{k\geq1}\delta_{d_k}$.
 The independence 
gives, for every $y>0$ and every  $f \in C_{\mathrm{c}}^{+}((a,\infty])$
\[
 \E \Bigl[ 
  \exp \Bigl\{   -\Big\langle f,
   \sum_{k \ge 1}\vartheta_a
   \mathcal R_{q,y \exp(d_k/\theta_*)}^{(k)}
  \Big\rangle\Bigr\} \Bigr] 
 =\E  \Bigl[\,  \prod_{k \ge 1}
  \Phi_{q,f}^{(a)}\bigl( ye^{d_k/\theta_*}   \bigr) \Bigr].
\]
 The Campbell formula for Poisson point processes therefore yields,   for every   $f \in C^{+}_{\mathrm{c}}((a,\infty])$ 
\begin{align}
  \E \bigl[  
  e^{ - \langle f,
   \Xi_q 
   \rangle} \mid  Z
  \bigr] &=  \E \bigl[  
  e^{ - \langle f,
   \Xi_q \res{(a,\infty]} 
   \rangle} \mid  Z
  \bigr] \\
  & =\exp  \Bigl\{ 
  -C_VZ\theta_*v_*^{3/2} \,e^{-a}
  \int_0^\infty \Bigl[ 
   1-\E  \prod_{k \ge 1}
  \Phi_{q,f}^{(a)}\bigl( ye^{d_k/\theta_*}  \bigr) 
   \Bigr]y^{-\theta_*-1}\,\dif y
 \Bigr\} . 
 \label{eq:frozen-cluster-Laplace}
\end{align}

This Laplace exponent suggests
defining the following measure on $ \mathcal{M}_{\mathrm{fp}}((a,\infty]) \setminus \{\mathbf{0}\}$, the space of non-zero finite point
measures on $(a,+\infty]$:  for every Borel subset $\mathcal{B}$ of $\mathcal{M}_{\mathrm{fp}}((a,\infty]) \setminus \{\mathbf{0}\}$,
\begin{equation}\label{eq:frozen-cluster-intensity}
 \Lambda_q^{(a)}(\mathcal{B})
 :=C_V\theta_*v_*^{3/2}e^{-a}
 \int_0^\infty
 \P \Bigl( \,
  \sum_{k\geq1}\vartheta_a
  \mathcal R_{q,ye^{d_k/\theta_*}}^{(k)}
  \in\mathcal{B}  \Bigr)y^{-\theta_*-1}\,\dif y.
\end{equation}
Here we verify that  $ \sum_{k\geq1} 
 \mathcal R_{q,ye^{d_k/\theta_*}}^{(k)}$ is almost surely a finite measure. As in the argument below \eqref{eq:def-Xi-q-a}   $\E[ \sum_k\mathcal R_{q,ye^{d_k/\theta_*}}^{(k)}([0,\infty]) \mid \mathcal{D}] \lesssim_{q} y^{q} \sum_{k} e^{q d_k/\theta_*} = y^{q} \langle \exp_{q/\theta_{*}} ,\mathcal{D} \rangle< \infty$ by Lemma~\ref{lem:brw-inputs}~\textup{(\ref{input:Madaule})}. 
We further show that $\Lambda_q^{(a)}  $ is a finite measure. Since each  
$\mathcal R_{q,ye^{d_k/\theta_*}}^{(k)}$ is the zero measure if and only if the corresponding Poisson count 
$\mathsf N^{(k)}_{ye^{d_k/\theta_*}}<q$, the total mass  of  $\Lambda_q^{(a)}  $ is   
\begin{equation}
   \Lambda_q^{(a)}( \mathcal{M}_{\mathrm{fp}}((a,\infty]) \setminus \{\mathbf{0}\})
   =C_V\theta_*v_*^{3/2}e^{-a}
   \int_0^\infty \Bigl[ 
    1-\E\prod_{d\in\mathcal D}
    \pi_q(ye^{d/\theta_*})
   \Bigr]   y^{-\theta_*-1}\,\dif y
   =C_q^\star\,e^{-a}.\label{eq:total-mass-Lambda}
\end{equation}
 
 Combining \eqref{eq:frozen-cluster-intensity} and
\eqref{eq:frozen-mark-kernel} with  
\eqref{eq:frozen-cluster-Laplace}, we obtain  for every  $f \in C^{+}_{\mathrm{c}}((a,\infty])$
\begin{equation}\label{eq:frozen-canonical-Laplace}
 \E \bigl[  
  e^{ - \langle f,
   \Xi_q 
   \rangle} \mid  Z
  \bigr]
 =\exp \Bigl\{  -Z\int
  \bigl(1-e^{-\langle f,\mu\rangle}\bigr)
  \Lambda_q^{(a)}(\dif\mu) \Bigr\} .
\end{equation}

\medskip
\noindent \underline{{Step 5}.}
It remains to identify the law of $\Xi_q$.  Since
$\Lambda_q^{(a)}$ is finite with total mass $C_q^\star e^{-a}$,
\eqref{eq:frozen-canonical-Laplace} gives
\begin{equation}\label{eq:frozen-void-probability}
 \P\bigl(\Xi_q((a,+\infty])=0\mid Z\bigr)
 =\exp\{-C_q^\star Ze^{-a}\}.
\end{equation}  
For $f\in C_{\mathrm c}^+(\R)$, choose $a\in\R$ such that
$\supp(f)\subset(a,+\infty)$, and denote by  \[ K_q(f) := \int
  \bigl(1-e^{-\langle f,\mu\rangle}\bigr)
  \Lambda_q^{(a)}(\dif\mu) =   C_V \theta_*v_*^{3/2} \,e^{-a}
  \int_0^\infty \Bigl[ 
   1-\E  \prod_{k \ge 1}
  \Phi_{q,f}^{(a)}\bigl( ye^{d_k/\theta_*}  \bigr) 
   \Bigr]y^{-\theta_*-1}\,\dif y  \] 
This definition does not depend on
the choice of $a$: for every admissible $a$, that equation gives the
same conditional Laplace functional by \eqref{eq:frozen-canonical-Laplace} , and $Z>0$ almost surely.

For $s\in\R$, set $f_s(x):=f(x+s)$.  If
$\supp(f)\subset(a,+\infty)$, then
$\Phi_{q,f_s}^{(a-s)}=\Phi_{q,f}^{(a)}$ and $\supp(f_s)\subset(a-s,+\infty)$. Thus the second expression of $K_q(\cdot)$ gives 
\begin{equation}\label{eq:frozen-exponential-scaling}
 K_q(f_s)=e^sK_q(f).
\end{equation} 
We may set
$
 \widehat\Xi_q
 :=\vartheta_{-\log(C_q^\star Z)}\Xi_q$, since $Z>0$ almost surely.
Then \eqref{eq:frozen-exponential-scaling} gives
\[
 \E\bigl[e^{-\langle f,\widehat\Xi_q\rangle}\mid Z=z\bigr]
 =\exp \bigl\{ 
  -zK_q\bigl(f_{-\log(C_q^\star z)}\bigr)
  \bigr\}
 =\exp \Bigl\{ -\frac{1}{C_q^\star}K_q(f)  \Bigr\}.
\]
In particular, $\widehat\Xi_q$ is independent of $Z$. 
Moreover, 
$\widehat\Xi_q$ is exp-$1$-stable  in the sense of
\eqref{eq:exp-one-stability} :  if
$e^\alpha+e^\beta=1$, then for independent copies
$\widehat\Xi_q^{(1)}$ and $\widehat\Xi_q^{(2)}$  of $\widehat\Xi_q$, we have 
$
 \vartheta_\alpha\widehat\Xi_q^{(1)}
 +\vartheta_\beta\widehat\Xi_q^{(2)}
 \overset{\mathrm{law}}{=}\widehat\Xi_q$. Because 
\eqref{eq:frozen-exponential-scaling} gives
\[
 \E\Bigl[
   \exp\bigl\{ -\langle f,\vartheta_\alpha\widehat\Xi_q^{(1)}\rangle
     -\langle f,\vartheta_\beta\widehat\Xi_q^{(2)}\rangle
   \bigr\} \Bigr]
 =
 \exp\Bigl\{ -\frac{K_q(f_\alpha)+K_q(f_\beta)}{C_q^\star} \Bigr\}
 =
 \exp\Bigl\{-\frac{K_q(f)}{C_q^\star}\Bigr\}.
\]
Since $\Xi_q=\vartheta_{\log(C_q^\star Z)}\widehat\Xi_q$,
\eqref{eq:frozen-void-probability} also yields
$
 \P\bigl(\widehat\Xi_q((a,+\infty])=0\bigr)
 =\exp\{-e^{-a}\}$ for $a \in \mathbb{R}$.
In particular, $\widehat\Xi_q$ is nonzero almost surely.
It is locally finite on $(-\infty,+\infty]$ and has no atom at
$+\infty$, so Lemma~\ref{lem:maillard-exp-stable} yields
\[
 \widehat\Xi_q\sim\DPPP(e^{-x}\,\dif x,\mathcal D_q^\star),
\]
where $\mathcal D_q^\star$ has rightmost atom zero almost surely.  
Shifting a  $\PPP( e^{-x}\,\dif x)$ by $c$
changes its intensity to $e^{c} e^{-x}\,\dif x$.  Since
$\Xi_q=\vartheta_{\log(C_q^\star Z)}\widehat\Xi_q$ and $Z$ is independent of $\widehat\Xi_q$, we conclude  \eqref{eq:frozen-SDPPP}.
\end{proof}
 
Now we are ready to prove Corollary~\ref{thm:three-regimes}.

\begin{proof}[Proof of Corollary~\ref{thm:three-regimes} for
$q>\theta_*$]
 Let  $r=q-1$. We divide the proof into several steps.

\smallskip\noindent
\underline{\textit{Step 1.}} 
Listed in decreasing order and with multiplicities, the atoms of
$\Xi_{n,q}$ are
$\kappa_*D_{n,r}^{(j)}-m_{n,q}$,
$1\leq j\leq\binom{n}{q}$.  By
\eqref{eq:frozen-SDPPP}, the limiting process has no atom at $+\infty$,
has locally finite upper tails, and has infinitely many atoms.  Thus
Proposition~\ref{thm:frozen-separation-process} and the continuous-mapping
theorem, applied to the first $k+1$ ordered atoms, yield
\[
 \Bigl(  
  \kappa_*D_{n,r}^*-m_{n,q},
  (\kappa_*(D_{n,r}^{(j)}-D_{n,r}^{(j+1)}))_{j=1}^k
 \Bigr)
 \xrightarrow[n\to\infty]{\mathrm{law}}
  \Bigl(  \mathsf A_q^{(1)},(\mathsf\Delta_{q,j})_{j=1}^k\Bigr) . 
\] 
Since $\gamma_q=\kappa_*$,   this is precisely the
first convergence asserted in the corollary.
 Moreover, by \eqref{eq:frozen-void-probability},   we have   
$
 \P(\mathsf A_q^{(1)}\leq x\mid Z)
 =\exp\{-C_q^\star Ze^{-x}\}$ for every $x \in \mathbb{R}$
and therefore
\[
 \mathsf A_q^{(1)}
 \overset{\mathrm{law}}=
 G+\log(C_q^\star Z),
\] 
where $G$ is a standard Gumbel random variable independent of $Z$. 
It remains to prove that
$\P(\mathsf\Delta_{q,j}=0)>0$ for every fixed $j\geq1$.
 
\smallskip\noindent
\underline{\textit{Step 2.}} We first describe another point of view of the law of $\Xi_q$.  
Fix $a\in\R$. By
\eqref{eq:frozen-canonical-Laplace} and Campbell's formula, conditionally on $Z$,   we may realize $\Xi_q$ as follows
\[
 \Xi_q\res{(a,\infty]}
=  \sum_{ i} \mu_i,
 \qquad
 \sum_i\delta_{\mu_i}
 \sim\PPP\bigl(Z\Lambda_q^{(a)}\bigr),
\]  
Define  for every  nonempty bounded open interval
$J\subset(a,\infty)$, 
\[  \mathsf{Pur}_m (J):=\{m \delta_x:x\in J\} \quad \text{ for } \quad m \in\{1,q+1\}. \]
Choose bounded open intervals
$J_{\mathrm{L}},J_{\mathrm{R}}\subset(a,\infty)$ with $\sup J_{\mathrm{L}}<\inf J_{\mathrm{R}}$ and consider the event 
\[  
G^{\Xi}_{j}=\Bigl\{  \sum_{i} \ind{\mu_i\in\mathsf{Pur}_{1}(J_{\mathrm{R}}) } = j-1, \sum_{i} \ind{\mu_i\in\mathsf{Pur}_{q+1}(J_{\mathrm{L}}) } = 1 ,\sum_{i} \ind{\mu_i \notin \mathsf{Pur}_{q+1}(J_{\mathrm{L}})\cup \mathsf{Pur}_1(J_{\mathrm{R}})}=0 
\Bigr\} 
\]
On this event $G^{\Xi}_{j}$, the
$j-1$ singleton atoms in $J_{\mathrm{R}}$ are
the only atoms above the $(q+1)$-fold atom in $J_{\mathrm{L}}$; all remaining
atoms lie at or below $a$.  Hence,  
$\mathsf A_q^{(j)}=\mathsf A_q^{(j+1)}$, or equivalently
$\mathsf\Delta_{q,j}=0$. 

It is sufficient to show $\P(G^{\Xi}_{j})>0$. Write 
\[  \lambda_{\mathrm{L},q+1}:=\Lambda_q^{(a)}(\mathsf{Pur}_{q+1}(J_{\mathrm{L}})) \ \text{ and } \
 \lambda_{\mathrm{R},1}:=\Lambda_q^{(a)}(  \mathsf{Pur}_{1}(J_{\mathrm{R}}) ). \]  
Then  the standard Poisson point process calculation and  \eqref{eq:total-mass-Lambda} give
\[
 \P(G^{\Xi}_{j} \mid Z)=e^{-ZC_q^\star e^{-a}} \ Z\lambda_{\mathrm{L},q+1}\ 
 \frac{(Z\lambda_{\mathrm{R},1})^{j-1}}{(j-1)!} . 
\]
 The desired result therefore follows from the following assertion:   for every  nonempty bounded open interval
$J\subset(a,\infty)$, 
\begin{equation}\label{eq:pure-truncated-frozen-clusters}
 \Lambda_q^{(a)}(\mathsf{Pur}_1(J))>0 \quad \text{ and } \quad 
 \Lambda_q^{(a)}(\mathsf{Pur}_{q+1}(J))>0, 
\end{equation} 

\smallskip\noindent
\underline{\textit{Step 3.}}  We now prove \eqref{eq:pure-truncated-frozen-clusters} from the definition \eqref{eq:frozen-cluster-intensity} of $ \Lambda_q^{(a)}$. Recall $(\mathsf{N}^{(k)}_{\lambda})$ are the independent Poisson point process used to define $\mathcal{R}^{(k)}_{q,\lambda}$, and 
$\mathcal D=\sum_{k\geq1}\delta_{d_k}$  
with $d_1=0$ as one  of its
rightmost atom.  
Consider the event, for
$m\in\{q,q+1\}$,
\begin{equation}
  F_m(y) :=  \bigl\{
     \mathsf N_y^{(1)}=m,\
   \mathsf N_{ye^{d_k/\theta_*}}^{(k)}<q \
   \text{ for every }k\geq2   \bigr\}.
\end{equation} 
 Note that on $F_m(y)$, we have 
\begin{equation}\label{eq:r-red-1}
   \sum_{k \ge 1} \vartheta_a \mathcal R_{q,ye^{d_k/\theta_*}}^{(k)}  = \sum_{A \in \binom{[m]}{q}} \delta_{a+\kappa_* T_A } 
\end{equation}
Moreover, 
$
 \P ( F_m(y)
  \mid \mathcal D)
 =
 e^{-y}\frac{y^m}{m!}
 \prod_{k\geq2}\pi_q(ye^{d_k/\theta_*})
 >0.$  because  $q/\theta_*>1$ and   
Lemma~\ref{lem:brw-inputs}  (\ref{input:Madaule}) yields that 
$
 \sum_{k\geq2}
 [1-\pi_q(ye^{d_k/\theta_*}) ]
 \leq
 \frac{y^q}{q!}
 \langle\exp_{q/\theta_*},\mathcal D\rangle
 <\infty$.

Take first $m=q$.  Applying   \eqref{eq:r-red-1} gives
 \[  F_m(y) \cap \{a+\kappa_*T_{[q]}\in J\} \subset   \bigl\{   \sum_{k } \vartheta_a \mathcal R_{q,ye^{d_k/\theta_*}}^{(k)}  \in \mathsf{Pur}_1(J) \bigr\}.  \] 
 Since  $T_{[q]}$ is exponential with rate $\kappa(q)$ and is
independent of $\mathcal{D}$ and $(  \mathsf{N}^{(k)}  )_{k \ge 1}$,   taking $\mathcal B=\mathsf{Pur}_1(J)$ in
\eqref{eq:frozen-cluster-intensity} yields 
\[  \Lambda_q^{(a)}(\mathsf{Pur}_1(J))
 \gtrsim  e^{-a}
 \int_0^\infty
 \P (F_m(y))  \P(a+\kappa_*T_{[q]}\in J)y^{-\theta_*-1}\,\dif y >0
. \]

Let $m=q+1$. 
Denote by $G^{\Pi}_q$ the event that every sub-clade created when $[q+1]$
splits at time $T_{[q+1]}$ has size at most $q-1$. 
Then on $G^{\Pi}_q$, every $A\in\binom{[q+1]}q$ has separation
time $T_{[q+1]}$.  Combining this with   \eqref{eq:r-red-1}, we get 
\[
  F_m(y) \cap G^{\Pi}_q \cap \{a+\kappa_*T_{[q+1]}\in J\} \subset  \Bigl\{  \sum_{k\geq1}\vartheta_a
 \mathcal R_{q,ye^{d_k/\theta_*}}^{(k)}
 =(q+1)\delta_{a+\kappa_*T_{[q+1]}}
 \in\mathsf{Pur}_{q+1}(J) \Bigr\} .
\]

 Let
$\rho_q$ be the rate of the event that $[q+1] $ is separated and  $G^{\Pi}_q$ occurs.
Since $T_{[q+1]}$ has density
$\kappa(q+1)e^{-\kappa(q+1)t}\,\dif t$, and the conditional
probability of $G^{\Pi}_q$ given the separation event is
$\rho_q/\kappa(q+1)$, we have  
\[
 \P\bigl(T_{[q+1]}\in\dif t,\,G^{\Pi}_q\bigr)
 =\kappa(q+1)e^{-\kappa(q+1)t}\,\dif t
   \frac{\rho_q}{\kappa(q+1)}
 =\rho_qe^{-\kappa(q+1)t}\,\dif t.
\]
The fragmentation is independent of $\mathcal{D}$ and $(  \mathsf{N}^{(k)}  )_{k \ge 1}$. Taking
$\mathcal B=\mathsf{Pur}_{q+1}(J)$ in
\eqref{eq:frozen-cluster-intensity} gives
\[
 \Lambda_q^{(a)}(\mathsf{Pur}_{q+1}(J))
 \gtrsim  e^{-a}
 \int_0^\infty \P(F_{q+1}(y))
y^{-\theta_*-1}\,\dif y    
  \int_{\{t>0:\,a+\kappa_*t\in J\}}
  \rho_q \, e^{-\kappa(q+1)t}\,\dif t 
\]

We only have to show $\rho_q>0$. 
During a splitting event of $\Pi$ with
a $\mathbf s$-paintbox, if  
$[q+1] $ is not separated or $G^{\Pi}_q$ does not occur, 
then either all $q+1$ labels enter one box, or $q$ labels
enter one box and the remaining label does not. 
Therefore, we get 
\begin{align}
 \rho_q
 =\int_{\mathcal S^\downarrow}
    \Bigl[1-\sum_i s_i^{q+1}
   -(q+1)\sum_i s_i^q(1-s_i)   \Bigr]  \nu(\dif\mathbf s)
    =(q+1)\kappa(q)-q\kappa(q+1) . 
 \label{eq:pure-q-plus-one-split-rate}
\end{align} 
Since   
$
 \frac{\dif}{\dif\theta}\frac{\kappa(\theta)}\theta
 =\frac{\Delta(\theta)}{\theta^2}<0$ for all $\theta> \theta_*$, we obtain   $\rho_q>0$. This completes the proof.
\end{proof}

\subsection{\texorpdfstring{The size-$q$ clade-count process
}{The size-q block-count process}}

The process $\mathcal{R}_{q,\lambda}$ \eqref{eq:residual-separation-process} records the
separation times generated by a Poissonian number of labels. For the same $(\mathsf{N},\Pi,\mathcal{I})$ define the  counting process on the frozen time scale  
\begin{equation}\label{eq:finite-exact-q-count}
 K_{q,\lambda}(s)
 :=\#\bigl\{\mathsf C\in\Pi(s/\kappa_*)|_{[\mathsf N_\lambda]}:
 |\mathsf C|=q\bigr\},
 \qquad \lambda\geq0,\ s\geq0.
\end{equation} 
In particular  $K_{q,\lambda}(0)=\ind{\mathsf N_\lambda=q}$ and when $\mathsf N_\lambda<q$, $K_{q,\lambda} \equiv 0$.

Let $(\mathsf N^{(i)},\Pi^{(i)},\mathcal I^{(i)})_{i\geq1}$ be
i.i.d.\ copies of $(\mathsf N,\Pi,\mathcal I)$, independent  of $(Z,(\mathcal E_a^{\mathrm{fr}})_{a\in\R})$. 
For each $i\geq1$ define
$(K^{(i)}_{q,\lambda})_{\lambda\geq0}$ from $(\mathsf N^{(i)},\Pi^{(i)})$ according to
\eqref{eq:finite-exact-q-count}.   
For every  point measure
$\mu=\sum_{i\geq1}\delta_{\lambda_i}$ on $(0,\infty]$ with
$\int_0^\infty z^q\,\mu(\dif z)<\infty$, set
\begin{equation}\label{eq:def-K-q}
 \mathcal{K}_q(\mu) : [0,\infty) \to [0,\infty) \ ; \ s \mapsto \sum_{i\geq1}K^{(i)}_{q,\lambda_i}(s)
\end{equation}
The sum in \eqref{eq:def-K-q} is almost surely finite,
$\mathcal{K}_q(\mu)$ is a well-defined random element of
$D([0,\infty),\mathbb Z_{\geq0})$. Indeed,  since $\P(\mathsf N_\lambda\geq q) \lesssim_q \lambda^q $, we have
$
 \sum_{i\geq1}\P\bigl(\mathsf N^{(i)}_{\lambda_i}\geq q\bigr)
 \lesssim_q \int_0^\infty z^q\,\mu(\dif z)<\infty$. Thus by the Borel--Cantelli lemma, almost surely $K^{(i)}_{q,\lambda_i}\equiv0$ for all
but finitely many $i$.   

For   $a\in\mathbb R$, and a function $f$ is defined on $[a,\infty)$, set 
$ \tau_a f:[0,\infty)\to \mathbb{R}, s \mapsto f(a+s)$. 

The case $q>\theta_*$ of
Theorem~\ref{thm:intro-exact-q-clade-process} follows from the
following proposition.
  
\begin{proposition} 
\label{thm:frozen-exact-q-blocks}
There exists a
$D(\mathbb R,\mathbb Z_{\geq0})$-valued process $N_q$, jointly defined
with $Z$, such that
\begin{equation}\label{eq:frozen-exact-q-process-limit}
 \bigl((N_{n,q}(x))_{x\in\R},  
  Z\bigr)
 \xrightarrow[n\to\infty]{\mathrm{law}}
 \bigl((N_q(x))_{x\in\R}, 
  Z\bigr) \ \text{ in }  D(\R,\mathbb Z_{\geq0}) \times\R  
\end{equation}
and the following assertions hold:
\begin{enumerate}[(i)]
  \item For every fixed $a\in\mathbb R$, there exists a joint realization
(possibly depending on $a$) of
$N_q$, $\mathcal E_a^{\mathrm{fr}}$, and $Z$ whose
$(N_q,Z)$-marginal coincides with the limiting law in
\eqref{eq:frozen-exact-q-process-limit} and whose
$(\mathcal E_a^{\mathrm{fr}},Z)$-marginal is the law specified in
\eqref{eq:mass-process-definition}, such that
\begin{equation}\label{eq:frozen-exact-q-limit-construction}
 \tau_aN_q
 =
 \mathcal K_q(\mathcal E_a^{\mathrm{fr}})
 \qquad\text{almost surely}.
\end{equation} 
\item For every $a<b$, with positive probability $N_q$ has
both an upward and a downward jump in $(a,b)$.
\end{enumerate} 
\end{proposition}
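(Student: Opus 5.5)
The plan is to mirror Steps 1--3 of the proof of Proposition~\ref{thm:frozen-separation-process}, with the separation-time process $\mathcal R_{q,\lambda}$ replaced by the count process $K_{q,\lambda}$.

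\textbf{Poissonized count and conditional representation.} Fix $a\in\R$ and set
\[
N^{\mathrm{Poi}}_{n,q}(x):=\#\{\mathsf C\in\Pi(t_n(x))|_{[\mathsf N_n]}:|\mathsf C|=q\}.
\]
By Poisson thinning and the fragmentation property, exactly as in \eqref{eq:cond-distri}, conditionally on $\mathcal I(t_n(a))$ the process $\tau_aN^{\mathrm{Poi}}_{n,q}$ has the law of $\mathcal K_q(\mathcal E^{\mathrm{fr}}_{n,a})$. Here the time change $x=a+\kappa_*s$ matches the definition of $K_{q,\lambda}(s)$ on the frozen scale.

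\textbf{Continuity of $\mu\mapsto\mathcal L(\mathcal K_q(\mu))$ and passage to the limit.} This is the main obstacle. Laplace functionals are not available, since the count process is not a point measure. I would argue instead with truncation and a Skorokhod coupling.

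By Lemma~\ref{lem:extremal-masses} we have $(\mathcal E^{\mathrm{fr}}_{n,a},Z,\langle\lambda^q,\mathcal E^{\mathrm{fr}}_{n,a}\rangle)\Rightarrow(\mathcal E^{\mathrm{fr}}_a,Z,\langle\lambda^q,\mathcal E^{\mathrm{fr}}_a\rangle)$. Realize this almost surely and fix $\varepsilon>0$. Atoms below $\varepsilon$ contribute a nonzero $K^{(i)}$ with conditional probability at most $\lesssim_q\int_{(0,\varepsilon]}\lambda^q\,\mathcal E^{\mathrm{fr}}_{n,a}(\dif\lambda)$. This is small uniformly in $n$ once $\varepsilon\to0$, because the $\lambda^q$-integral converges and the limit has no atom at $\varepsilon$ for a.e.\ $\varepsilon$.

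The finitely many atoms above $\varepsilon$ converge individually. I would couple $K_{q,\lambda_{n,i}}$ with $K_{q,\lambda_i}$ through the common Poisson process $\mathsf N^{(i)}$ and a common fragmentation $\Pi^{(i)}$. Then $\mathsf N_{\lambda_{n,i}}=\mathsf N_{\lambda_i}$ with probability tending to one, and the two paths coincide on that event.

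Consequently $(\tau_aN^{\mathrm{Poi}}_{n,q},Z)\Rightarrow(\mathcal K_q(\mathcal E^{\mathrm{fr}}_a),Z)$ in $D([0,\infty))\times\R$. Convergence in $J_1$ holds because the paths are piecewise constant with finitely many jumps on compacts; their jump times are a.s.\ distinct and continuous in law.

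\textbf{De-Poissonization and consistency.} On $\{|\mathsf N_n-n|\le n^{2/3}\}$, the labels in $[\mathsf N_n]\triangle[n]$ can alter size-$q$ clade counts after $t_n(a)$ only if some such label lies in a clade of size $\ge q$ at $t_n(a)$. Hence $N^{\mathrm{Poi}}_{n,q}$ can differ from $N_{n,q}$ on $[a,\infty)$ only if one of these $\lesssim n^{2/3}$ labels lies in a clade of size $\ge q$ at time $t_n(a)$. By a union bound with \eqref{eq:conditional-unseparated-probability}, this has conditional probability
\[
\lesssim n^{2/3}\,n^{q-1}\sum_{k\ge q}\binom{n}{k-1}S_k\lesssim n^{-1/3}\langle\lambda^{q}\vee\lambda^{\mathrm{big}},\mathcal E^{\mathrm{fr}}_{n,a}\rangle.
\]
More carefully, one bounds it by $n^{-1/3}\sum_i nX_i\,\P(\Poi(nX_i)\ge q-1)$, which is tight by Lemma~\ref{lem:extremal-masses} applied with $g(\lambda)=\lambda(1-\pi_{q-1}(\lambda))\lesssim\lambda^q$ near $0$. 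So it tends to zero.

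The limits for different $a$ are consistent in law, because restricting $\tau_aN$ to times $\ge b-a$ gives the law at $b$. Kolmogorov extension over $a=-k$ then yields $N_q$ on $D(\R)$ satisfying \eqref{eq:frozen-exact-q-process-limit}, and realization~(i) holds by construction.

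\textbf{Non-monotonicity (ii).} Fix $a<b$ and use representation (i) at level $a$. With positive probability the following event occurs: exactly one atom, $\lambda_1(a)\in(0,\infty)$, has $\mathsf N^{(1)}=q+1$, and all other atoms have $\mathsf N^{(k)}<q$. This has positive probability by the product bound used for $F_m(y)$. On this event, ask that the clade $[q+1]$ erodes or dislocates into a singleton plus a $q$-clade in time $s_1\in(0,(b-a)/\kappa_*)$, and that this $q$-clade splits at a later time before $(b-a)/\kappa_*$. The count then jumps $0\to1\to0$ inside $(a,b)$.

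The first transition has positive rate. Its rate is $(q+1)\int\sum_i s_i^q(1-s_i)\,\nu(\dif\mathbf s)+c(q+1)$, which is positive since $\nu(s_2>0)>0$ ensures $\sum_i s_i^q(1-s_i)>0$ on a set of positive $\nu$-measure. The second transition has rate $\kappa(q)>0$. This gives the required upward and downward jumps with positive probability.
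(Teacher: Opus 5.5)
Your proposal is correct and follows essentially the paper's route. The shared steps are: the conditional identity in law between $\tau_aN^{\mathrm{Poi}}_{n,q}$ and $\mathcal K_q(\mathcal E^{\mathrm{fr}}_{n,a})$; truncation of atoms below $\varepsilon$ through $\int_{(0,\varepsilon)}\lambda^q\,\mathcal E^{\mathrm{fr}}_{n,a}(\dif\lambda)$; a Skorokhod coupling with shared $(\mathsf N^{(i)},\Pi^{(i)})$ for the finitely many large atoms; de-Poissonization at cost $n^{-1/3}$ times a tight quantity; consistency in law plus Kolmogorov extension; and, for (ii), a $0\to1\to0$ event driven by the split of a $(q+1)$-clade into a $q$-clade and a singleton. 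The differences are minor. You bound the de-Poissonization error by stray labels rather than stray $q$-subsets; your first display there is garbled, but the corrected bound with $g(\lambda)\lesssim\lambda^q$ near $0$ works, with binomial in place of Poisson tails. In (ii) you force all other summands to vanish via $\prod_{k\geq2}\pi_q(\lambda_k(a))>0$ instead of using the no-common-jump property, though your consistency step still needs that property, which your remark on atomless jump times supplies.
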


\begin{proof}[Proof of Proposition~\ref{thm:frozen-exact-q-blocks}]
 
In the proof we write $D_{[0,\infty)}$ short for $D ([0,\infty),\mathbb Z_{\geq0})$.

\smallskip\noindent
\underline{\textit{Step 1.}}  Fix $a \in \mathbb{R}$ and  set  
\[
 N^{\mathrm{Poi}}_{n,q}(x)
 :=\#\bigl\{\mathsf{C}\in\Pi(t_n(x))|_{[\mathsf N_n]}:
 |\mathsf{C}|=q\bigr\},\qquad x\geq a.
\] 
We claim the following analog to \eqref{eq:frozen-exact-q-limit-construction}:
\begin{equation}
   \label{eq:poi-N-cov-1}
   \Bigl(
    \tau_a N^{\mathrm{Poi}}_{n,q} , \, 
    \mathcal E^{\mathrm{fr}}_{n,a},Z(t_n(a))
   \Bigr)
   \xrightarrow[n\to\infty]{\mathrm{law}}
   \Bigl(
      \mathcal{K}_q(\mathcal E^{\mathrm{fr}}_a )  ,\,
    \mathcal E^{\mathrm{fr}}_a,Z
   \Bigr)
\end{equation}
in
$D_{[0,\infty)}\times
\Mloc((0,\infty])\times\mathbb R$. 
Indeed, by Poisson thinning and the fragmentation property, the argument for proving \eqref{eq:cond-distri} yields
\begin{align}
  \mathcal{L} \Bigl\{(N^{\mathrm{Poi}}_{n,q}(a+s))_{s \ge 0} \,\Bigm|\, \mathcal{I}(t_n(a))  \Bigr\}
  &= \mathcal{L}\Bigl\{
    \Bigl(
    \sum_{i\geq1}
    K^{(i)}_{q,nX_i(t_n(a))}
    \bigl(s\bigr)\Bigr)_{s \ge 0} \,\Bigm|\, \mathcal{I}(t_n(a)) \Bigr\} \\
  &= \mathcal{L}\bigl\{  \mathcal{K}_q(\mathcal E^{\mathrm{fr}}_{n,a} ) 
     \,\bigm|\, \mathcal{I}(t_n(a)) \bigr\}.
\end{align}
Since both $\mathcal E^{\mathrm{fr}}_{n,a}$ and $Z(t_n(a))$ are
$\mathcal I(t_n(a))$-measurable, the preceding conditional identity in
law reduces \eqref{eq:poi-N-cov-1} to showing
\begin{equation}
 \Bigl(
  \mathcal{K}_q(\mathcal E^{\mathrm{fr}}_{n,a} ) ,
  \mathcal{E}_{n,a}^{\mathrm{fr}},Z(t_n(a))
 \Bigr)
 \xrightarrow[n\to\infty]{\mathrm{law}}
 \Bigl(
  \mathcal{K}_q(\mathcal E^{\mathrm{fr}}_a ) ,
  \mathcal E_{a}^{\mathrm{fr}},Z
 \Bigr).
 \label{eq:K-cov-1}
\end{equation}

We next reduce \eqref{eq:K-cov-1} to the corresponding convergence for
the point measures truncated below at $\epsilon$. Let
$\bar{\mathcal E}_{n,a}^{\mathrm{fr},[\epsilon]}:=
\sum_i\ind{nX_i(t_n(a))\geq\epsilon}\delta_{nX_i(t_n(a))}$ and
$\bar{\mathcal E}_{a}^{\mathrm{fr},[\epsilon]}:=
\sum_i\ind{\lambda_i(a)\geq\epsilon}\delta_{\lambda_i(a)}$. Then
\begin{equation}
 \Bigl(
  \mathcal{K}_q(\bar{\mathcal E}_{n,a}^{\mathrm{fr},[\epsilon]}) ,
  \mathcal E_{n,a}^{\mathrm{fr}},Z(t_n(a))
 \Bigr)
 \xrightarrow[n\to\infty]{\mathrm{law}}
 \Bigl(
  \mathcal{K}_q(\bar{\mathcal E}_{a}^{\mathrm{fr},[\epsilon]}),
  \mathcal E_{a}^{\mathrm{fr}},Z
 \Bigr).
 \label{eq:K-cov-2}
\end{equation}

To justify this reduction, let $B_{n,\epsilon}$ be the event that
$\mathsf N^{(i)}_{nX_i(t_n(a))}\geq q$ for some $i$ satisfying
$nX_i(t_n(a))<\epsilon$. On $B_{n,\epsilon}^{\mathrm c}$, we have
$K^{(i)}_{q,nX_i(t_n(a))}\equiv0$ for every such $i$, which implies that
$\mathcal{K}_q(\mathcal E^{\mathrm{fr}}_{n,a})$ and
$\mathcal{K}_q(\bar{\mathcal E}^{\mathrm{fr},[\epsilon]}_{n,a})$ coincide.
A union bound and the Poisson tail estimate give
\[
 \P[B_{n,\epsilon}\mid\mathcal{I}(t_n(a))]
 \leq
 1\wedge\frac1{q!}
 \int_{(0,\epsilon)}z^q\,
 \mathcal E^{\mathrm{fr}}_{n,a}(\dif z).
\]
Since $q>\theta_*$, Lemma~\ref{lem:extremal-masses}, applied with
$g(y)=y^q$, together with dominated convergence shows that
$\P(B_{n,\epsilon})$ tends to zero when $n\to\infty$ followed by
$\epsilon\downarrow0$. The same argument shows that
$\P(\mathcal{K}_q(\mathcal E^{\mathrm{fr}}_a)
\equiv\mathcal{K}_q(\bar{\mathcal E}^{\mathrm{fr},[\epsilon]}_a))\to1$
as $\epsilon\downarrow0$. Thus by Slutsky's lemma
\eqref{eq:K-cov-2} is enough for \eqref{eq:K-cov-1}.

\medskip\noindent
\underline{\textit{Step 2.}}
By Lemma~\ref{lem:extremal-masses} and the Skorokhod representation
theorem, we may work on an auxiliary probability space on which
$(\mathcal E_{n,a}^{\mathrm{fr}},Z(t_n(a)))\to
(\mathcal E_a^{\mathrm{fr}},Z)$ almost surely in
$\Mloc((0,\infty])\times\R$. Set
$M_\epsilon:=\mathcal E_a^{\mathrm{fr}}([\epsilon,\infty])$.
Since $\mathcal E_a^{\mathrm{fr}}$ is locally finite and has no atom at
$+\infty$, we have $M_\epsilon<\infty$ almost surely. Moreover,
$\P(\mathcal E_a^{\mathrm{fr}}(\{\epsilon\})=0)=1$, since
\eqref{eq:cluster-mass-representation}, Campbell's formula   yield
\[
 \E[\mathcal E_a^{\mathrm{fr}}(\{\epsilon\})\mid Z]
 \lesssim e^{-a}Z
 \E\Bigl[
  \sum_{d\in\mathcal D}\int_0^\infty
  \ind{ye^{d/\theta_*}=\epsilon}
  y^{-\theta_*-1}\,\dif y
 \Bigr]
 =0.
\]
The vague convergence thus implies that, almost surely for all
sufficiently large $n$,
$\mathcal E_{n,a}^{\mathrm{fr}}([\epsilon,\infty])=M_\epsilon$ and
$nX_i(t_n(a))\to\lambda_i(a)$ for $1\leq i\leq M_\epsilon$. Here we used that the
atoms $nX_i(t_n(a)), \lambda_i(a)$ both are listed in decreasing order with multiplicity. Hence
\begin{align}
 &\P \Bigl[  
  \mathcal K_q(\bar{\mathcal E}^{\mathrm{fr},[\epsilon]}_{n,a})
  \ne
  \mathcal K_q(\bar{\mathcal E}^{\mathrm{fr},[\epsilon]}_a)
 \,\Bigm|\, \mathcal E^{\mathrm{fr}}_{n,a},
                    \mathcal E^{\mathrm{fr}}_a
 \Bigr] \le \sum_{i=1}^{M_{\epsilon}} \P\Bigl[
  K^{(i)}_{q,nX_i(t_n(a))}\ne K^{(i)}_{q,\lambda_i(a)} 
  \,\Bigm|\,\mathcal E_{n,a}^{\mathrm{fr}},\mathcal E_a^{\mathrm{fr}}
 \Bigr] \\
 &\qquad  \leq \sum_{i=1}^{M_{\epsilon}}
 \P\Bigl[
  \mathsf N^{(i)}_{nX_i(t_n(a))}\ne\mathsf N^{(i)}_{\lambda_i(a)} 
  \,\Bigm|\,\mathcal E_{n,a}^{\mathrm{fr}},\mathcal E_a^{\mathrm{fr}}
 \Bigr]  \leq
 \sum_{i=1}^{M_\epsilon}
 \bigl[1-e^{-|nX_i(t_n(a))-\lambda_i(a)|}\bigr]
 \xrightarrow[n\to\infty]{\mathrm{a.s.}}0.
\end{align} 
Together with the almost sure convergence of the other two coordinates,
this proves \eqref{eq:K-cov-2}.  By  reductions in Step 1, \eqref{eq:poi-N-cov-1} follows.

\medskip\noindent
\underline{\textit{Step 3.}}
We next remove the Poissonization, by adapting the argument in Step~3 of
the proof of Proposition~\ref{thm:frozen-separation-process}. Observe
that if the sample paths of $N^{\mathrm{Poi}}_{n,q}$ and $N_{n,q}$
differ on $[a,\infty)$, some $q$-subset in the symmetric difference
$\binom{[\mathsf N_n]}q\mathbin{\triangle}\binom{[n]}q$ must still be
unseparated at time $t_n(a)$. Hence, by
\eqref{eq:conditional-unseparated-probability}, the union bound and
$|\binom{\mathsf N_n}{q}-\binom nq|
\lesssim_q n^{q-1}|\mathsf N_n-n|$, on
$\{|\mathsf N_n-n|\leq n^{2/3}\}$,
\[
 \P\left(
  \tau_a  N^{\mathrm{Poi}}_{n,q} 
  \ne   \tau_a  N_{n,q} 
  \,\bigm|\,\mathcal I(t_n(a)),\mathsf N_n
 \right)
 \lesssim_q n^{-1/3}
 \int_0^\infty z^q\mathcal E^{\mathrm{fr}}_{n,a}(\dif z).
\]
By Lemma~\ref{lem:extremal-masses} and the fact 
$\P(|\mathsf N_n-n|>n^{2/3})\to0$, we obtain 
$
 \P (
  N^{\mathrm{Poi}}_{n,q}=N_{n,q}\text{ on }[a,\infty)
 )\longrightarrow1$ as $n \to \infty$. 
Hence we can replace $N^{\mathrm{Poi}}_{n,q}$ in
\eqref{eq:poi-N-cov-1} by $N_{n,q}$ and obtain 
\begin{equation}
 \label{eq:poi-N-cov-2}
  \Bigl( 
   \tau_a  N_{n,q} , \, 
  \mathcal E^{\mathrm{fr}}_{n,a},Z(t_n(a))
  \Bigr)
 \xrightarrow[n\to\infty]{\mathrm{law}}
 \Bigl( 
  \mathcal K_q(\mathcal E^{\mathrm{fr}}_a), \,
  \mathcal E^{\mathrm{fr}}_a,Z
  \Bigr) \quad \text{ in }D_{[0,\infty)}\times
\Mloc((0,\infty])\times\mathbb R.
\end{equation}

\medskip\noindent 
\underline{\textit{Step 4.}} We now identify the limiting process $N_q$.  
We first show that  for every $a\in \mathbb{R}$ and $ h>0$,
\begin{equation}\label{eq:consistency-2}
  \Bigl( 
    \tau_h  \mathcal K_q(\mathcal E_a^{\mathrm{fr}}) ,\, Z
    \Bigr) 
   \overset{\mathrm{law}}{=}
    \Bigl(  
     \mathcal K_q(\mathcal E_{a+h}^{\mathrm{fr}}) ,\, Z
   \Bigr) . \footnote{We emphasize that \eqref{eq:consistency-2} is only an identity in law.
Although $(\mathcal E_a^{\mathrm{fr}})_{a\in\mathbb R}$ and the copies
defining $\mathcal K_q$ are realized on a common probability space, this
realization does not in general satisfy
$
 \mathcal K_q(\mathcal E_a^{\mathrm{fr}})(h+\cdot)
 =
 \mathcal K_q(\mathcal E_{a+h}^{\mathrm{fr}})$ a.s.}
\end{equation}
 Indeed $\tau_h :  D_{[0,\infty)} \to D_{[0,\infty)}$ is continuous at every function $f$ who is
continuous at $h$.   
 By the argument following \eqref{eq:def-K-q}, only finitely many
summands of $\mathcal K_q(\mathcal E_a^{\mathrm{fr}})$ are nonzero.
Conditional on $\mathcal E_a^{\mathrm{fr}}$, these summands are
independent and have atomless jump times. Hence, almost surely, no two
nonzero summands have a common jump time. In particular,
$\mathcal K_q(\mathcal E_a^{\mathrm{fr}})$ is almost surely continuous
at each fixed $h>0$.
Applying the
continuous mapping theorem to \eqref{eq:poi-N-cov-2} at $a$, and
comparing the resulting convergence with \eqref{eq:poi-N-cov-2} at
$a+h$, gives  \eqref{eq:consistency-2}.

It follows from \eqref{eq:consistency-2} that, for integers $k, \ell\geq1$,
$ (  \tau_{k}\mathcal K_q(\mathcal E_{-\ell-k}^{\mathrm{fr}})  , Z  )
 \overset{\mathrm{law}}{=}
 (  \mathcal K_q(\mathcal E_{-\ell}^{\mathrm{fr}}) , Z) .$
Thus, these joint laws are consistent under translations and determine
a $D(\mathbb R,\mathbb Z_{\geq0})$-valued process $N_q$, jointly defined
with $Z$, such that 
\begin{equation}\label{eq-consistency-2}
   \bigl(   \tau_{-\ell} N_q ,Z \bigr)
   \overset{\mathrm{law}}{=}
   \bigl( 
     \mathcal K_q(\mathcal E_{-\ell}^{\mathrm{fr}}) ,Z
    \bigr)  \ ,
   \quad \forall\, \ell\geq1.
\end{equation} 
and   $N_q$ has no fixed jump times. By the continuous mapping theorem again, \eqref{eq:poi-N-cov-2} implies
\eqref{eq:frozen-exact-q-process-limit}. 

To prove  \eqref{eq:frozen-exact-q-limit-construction}, fix $a\in\mathbb R$. Since
$
 \left(\tau_aN_q,Z\right)
 \overset{\mathrm{law}}{=}
 \left(\mathcal K_q(\mathcal E_a^{\mathrm{fr}}),Z\right)$ by \eqref{eq-consistency-2},
we may therefore choose a joint realization of
$N_q$, $\mathcal E_a^{\mathrm{fr}}$, $Z$, and the copies defining
$\mathcal K_q$, preserving their prescribed laws, such that
$
 \tau_aN_q
 =
 \mathcal K_q(\mathcal E_a^{\mathrm{fr}})$ almost surely. 
This proves \eqref{eq:frozen-exact-q-limit-construction}.

\smallskip\noindent
\underline{\textit{Step 5.}}
It remains to prove the existence of both upward and downward jumps. Use
the coupling from Step~4. By \eqref{eq:cluster-mass-representation},
the largest atom $\lambda_1(a)$ of $\mathcal E_a^{\mathrm{fr}}$ belongs
to $(0,\infty)$ almost surely.  
Let $F_a = \{  \mathsf N^{(1)}_{\lambda_1(a)}=q+1 \}$.  
 Then $\P(F_a \mid \mathcal{E}^{\mathrm{fr}}_a ) = e^{-\lambda_1(a)}  \frac{\lambda_1(a)^{q+1}}{(q+1)!} >0$. 
Moreover, on $F_a$, $K^{(1)}_{q,\lambda_1(a)}(0)=0$, as
$\Pi^{(1)}(0)|_{[q+1]}$ consists of a single block of size $q+1$.  
The rate at which $\Pi^{(1)}|_{[q+1]}$ splits into blocks of sizes $q$
and $1$ is  
\begin{equation}
   (q+1)c+(q+1)
   \int_{\mathcal S^\downarrow}\sum_{j\geq1}s_j^q(1-s_j)\,
   \nu(\dif\mathbf s)
   =(q+1)\bigl(\kappa(q+1)-\kappa(q)\bigr)>0.
\end{equation} 
Here the positivity follows from \eqref{eq:entropy-dominance}:  As noted after \eqref{eq:def-a*}, \eqref{eq:entropy-dominance} implies
$
 \nu\bigl(\{\mathbf s:s_2>0\}\bigr)>0$.
which implies  $ \int_{\mathcal S^\downarrow} s_2^q(1-s_2) \nu(\dif\mathbf s)>0$.

Since the total jump rate of $\Pi^{(1)}|_{[q+1]}$ is $\kappa(q+1)$,
conditional on $F_a$, the probability that
$T^{(1)}_{[q+1]}\in\dif s$ and
$\Pi^{(1)}(T^{(1)}_{[q+1]})|_{[q+1]}$ consists of two blocks
of sizes $q$ and $1$ is $(q+1) [\kappa(q+1)-\kappa(q)] e^{-\kappa(q+1) s} \dif s$. 
On this event,
$K^{(1)}_{q,\lambda_1(a)}$ jumps from $0$ to $1$ at  
$\kappa_*T^{(1)}_{[q+1]}$. 
We further require $T^{(1)}_{[q+1]}<L$, and   the newly created size-$q$ block to split before
time $L$. By the fragmentation property, its lifetime
after birth is independent of the past and exponential with rate
$\kappa(q)>0$. On this stronger event,
$K^{(1)}_{q,\lambda_1(a)}$ also jumps from $1$ to $0$ before time $\kappa_* L$.
By the no-common-jump property established in Step~4 below
\eqref{eq:consistency-2}, no other summand of
$\mathcal K_q(\mathcal E_a^{\mathrm{fr}})$ jumps at either time.
Consequently, the probability that $N_q$ has both an upward and a
downward jump in $(a,a+\kappa_* L)$ is bounded below by 
\begin{equation}
   \E\left[e^{-\lambda_1(a)}
            \frac{\lambda_1(a)^{q+1}}{(q+1)!}\right]
   \int_0^{L}
   (q+1)\bigl(\kappa(q+1)-\kappa(q)\bigr)
   e^{-\kappa(q+1)s}
   \bigl[   1- e^{-\kappa(q)(L-s)}\bigr]  \dif s
   >0.
\end{equation} 
This proves the asserted nonmonotonicity.
\end{proof}

\addtocontents{toc}{\protect\setcounter{tocdepth}{1}}
\appendix
\section{Proofs of the preliminary inputs}
\label{sec:preliminary-input-proofs}

We prove the preliminary results from Section~\ref{sec:preliminaries}.  We  derive the martingale
and extremal estimates from branching random walk results by sampling
the fragmentation at discrete times and passing to continuous time. Finally, we prove
Lemma~\ref{lem:pp-to-count} on convergence of tail-count processes.
 
\subsection{Branching random walk estimates}
\label{sec:brw-proofs} 
The corresponding continuous-time statements in the literature do not
quite cover the present level of generality.  In particular, 
\cite[Theorem~3.1]{KyprianouMadaule} imposes additional restrictions on
the dislocation measure, and \cite[Theorem~2.2]{KyprianouLaneMorters}
works with conservative interval fragmentations.  Adapting these results
to the present setting would require additional arguments.  We instead use
the time-discretization approach in  
\cite{BertoinRouault}. 

\smallskip
\noindent\underline{The associated BRW.}
Fix $h>0$.  The interval fragmentation sampled at times $nh$, $n\geq0$, defines a rooted tree
$\mathbb T_h$: the components of $\mathcal I(nh)$ form generation $n$,
and a component of $\mathcal I((n+1)h)$ is a child of the unique
component of $\mathcal I(nh)$ that contains it.  If $u$ in generation
$n$ of $\mathbb{T}_h$, write $|u|_{h}=n$ and set 
\[
 \widetilde{X}_h(u):=|I_u| \ , \quad
 \widetilde{V}_h(u):= \log\frac1{\widetilde X_h(u)} .
\]
The fragmentation property shows that $(\widetilde{V}_h(u): u \in \mathbb T_h)$ is a discrete-time
BRW with offspring displacement point process
$\sum_{i:X_i(h)>0}\delta_{ \log \frac{1}{X_i(h)} }$ 
and cumulant  
\begin{equation}\label{eq:brw:generating-function}
   \Phi_{h}(\theta): = -\log \E  \, \sum_{ |u|_{h}=1}  e^{-\theta \widetilde{V}_h(u)}     =-\log \E \, \sum_{ i \ge 1}   X_{i}(h)  ^{\theta}  =   h \, \kappa(\theta) . 
\end{equation}  
The additive martingale with parameter $\theta$ of the BRW $(\widetilde{V}_{h},\mathbb{T}_h)$ is then 
\begin{equation}
  \sum_{ |u|_{h}=n}  e^{-\theta \widetilde{V}_h(u)+ n \Phi_{h}(\theta)} = e^{h n\kappa(\theta) } \sum_{i:X_{i}(nh)>0} X_i(nh)^{\theta} = W_\theta(nh).
\end{equation}  

\smallskip
\noindent\underline{Nonextinction and supercriticality of the BRW skeleton.}
To apply the results of BRW, we need a preliminary property that  $\mathbb{T}_h$ is super-critical. We claim that $\mathbb{T}_h$ has no leaf  
\begin{equation}\label{eq:positive-genealogy}
 \P( X_1(h)>0 )=1 \, , \text { for every } h > 0. 
\end{equation} 
Moreover, $\mathbb{T}_h$ is not a line:
\begin{equation}\label{eq:supercritical}
 \P(X_2(h)>0)>0, \text { for every } h > 0. 
\end{equation}

Indeed, let
$\tau:=\inf\{t\geq0:X_1(t)\leq1/2\}$.
By \cite[Proposition~4.1]{BerestyckiRanked}, there exists a
subordinator $\xi$ with drift $c$ and Lévy measure
$\nu\circ\bigl(\mathbf s\mapsto\log(1/s_1)\bigr)^{-1}$
such that
$X_1(t)=e^{-\xi_t},  0\leq t< \tau.$
The assumptions \eqref{eq:dislocation-integrability} and
\eqref{eq:no-sudden-extinction} imply that $\xi$ has no killing and
is finite at every finite time. Consequently, $\tau>0$ almost surely
and, on $\{\tau<\infty\}$, $X_1(\tau)>0$. 
At time $\tau$, select the largest fragment and repeat the construction
inside that fragment. By the strong Markov property and
homogeneity, the successive durations $\tau_1,\tau_2,\ldots$ are
independent copies of $\tau$. Since $\tau>0$ almost surely,
$ \sum_{j\geq1}\tau_j=\infty$ almost surely.
Thus only finitely many selections are made before any fixed time $h$, and
the selected lineage has positive mass throughout.

Now we verify \eqref{eq:supercritical}.  By
\eqref{eq:entropy-dominance}, we have  $
 \nu(\{\mathbf s:s_2>0\})>0$.
Choose $\varepsilon>0$ such that
$
 0<\lambda_\varepsilon
 :=\nu(\{\mathbf s:s_2\geq\varepsilon\})$.  
Let
$
 \tau' :=\inf\{t\geq0: X_2(t)>0 \}$. 
 By the Poisson construction, we have  for every $h>0$,
$
 \P(\tau' \leq h/2)
 \geq1-e^{-\lambda_\varepsilon h/2}$.
On $\{\tau' \leq h/2\}$, two positive-mass fragments are present at some
time before $h$.  By \eqref{eq:positive-genealogy} and the strong
Markov property, each of them has a positive-mass descendant at
time $h$.  Hence
$\{\tau' \leq h/2\}\subset \{X_2(h)>0\}$, and \eqref{eq:supercritical} follows.  

\smallskip
\noindent\underline{Boundary case BRW.} For each $u \in \mathbb{T}_h$ define 
\[ V^{\mathrm{bd}}_h(u) = \theta_* \widetilde{V}_h (u)-nh\kappa_* \   \text{ if } \quad  |u|_{h}=n .\] 
Differentiating \eqref{eq:exponent-S} twice at $\theta_*$ and using
$\theta_*\kappa'(\theta_*)=\kappa_*$ gives
\begin{equation}\label{eq:boundary-skeleton-data}
 \E\sum_{|u|_{h}=1} e^{-V^{\mathrm{bd}}_h(u)}=1,
 \ 
 \E\sum_{|u|_{h}=1} V^{\mathrm{bd}}_h(u)e^{-V^{\mathrm{bd}}_h(u)}=0,
 \ 
 \E\sum_{|u|_{h}=1} V^{\mathrm{bd}}_h(u)^2e^{-V^{\mathrm{bd}}_h(u)}
 =h\theta_*^2\sigma_*^2.
\end{equation}
Furthermore, using $\sum_iX_i(h)\leq1$ and
$x^{\theta_*-1}\log(1/x)\leq1/[e(\theta_*-1)]$, we have 
\begin{equation}\label{eq:boundary-skeleton-bounds}
 \sum_{|u|_{h}=1} e^{-V^{\mathrm{bd}}_h(u) }\leq e^{h\kappa_*} \ , \
\sum_{|u|_{h}=1} V^{\mathrm{bd}}_h(u) ^+ e^{-V^{\mathrm{bd}}_h(u) } 
 \leq\frac{\theta_*e^{h\kappa_*}}{e(\theta_*-1)}.
\end{equation}
Thus the $h$-skeleton is a supercritical boundary BRW, survives almost
surely, and satisfies conditions \emph{(1.4)}--\emph{(1.5)} of
\cite{AidekonShi} and condition \emph{(1.5)} of \cite{MadauleTip}.

 \begin{proof}[Proof of Lemma~\ref{lem:brw-inputs}
  \textup{(\ref{input:derivative})}]
The martingale property follows by differentiating the conditional
martingale identity for $W_\theta(t)$ at $\theta=\theta_*$.

Although \cite[Proposition~7]{BertoinRouault} is stated for conservative
fragmentations, its argument extends to the present setting, since dissipated mass can be
viewed as particles sent to $+\infty$. Hence $Z(t)$ converges almost surely to a nontrivial nonnegative limit $Z$;
\eqref{eq:positive-genealogy} gives $Z>0$ almost surely. 
Alternatively, one may apply directly
\cite[Theorem~2.3]{MalleinShiDerivative}, whose condition $\mathrm{(H^*)}$
follows from
\[
  \sum_i s_i^{\theta_*}\le 1,
  \qquad
  \theta_*\sum_i s_i^{\theta_*}\log(1/s_i)
  \le \frac{\theta_*}{e(\theta_*-1)},
\]
and $\theta_*\kappa_\nu'(\theta_*)<\infty$. 
\end{proof}

\begin{proof}[Proof of Lemma~\ref{lem:brw-inputs} (\ref{input:SH})]   We  apply
\cite[Theorem~1.1]{AidekonShi} to the $h$-skeleton $(V_h^{\mathrm{bd}},\mathbb T_h)$, which   is a supercritical boundary BRW, survives almost
surely, and satisfies conditions  (1.4) and (1.5)  of
\cite{AidekonShi} by \eqref{eq:boundary-skeleton-data} \eqref{eq:boundary-skeleton-bounds}. 
With the variance in
\eqref{eq:boundary-skeleton-data}, this gives
\begin{equation}\label{eq:critical-skeleton-SH}
 \sqrt{n}\,W_{\theta_*}(n h)
 \xrightarrow[n\to\infty]{\P}
 \left(\frac{2}{\pi h\theta_*^2\sigma_*^2}\right)^{1/2}Z.
\end{equation}
For $s\leq t\leq s+h$, since  $S_{\theta_*}(\cdot)$ is decreasing, we have $ 
 e^{-h\kappa_*}W_{\theta_*}(s+h)
 \leq W_{\theta_*}(t)
 \leq e^{h\kappa_*}W_{\theta_*}(s)$.  
Apply \eqref{eq:critical-skeleton-SH} with
$s=h\lfloor t/h\rfloor$, first let $t\to\infty$, and then let
$h\downarrow0$.  This yields \eqref{eq:SH-scaling}. 
\end{proof}

\begin{proof}[Proof of Lemma~\ref{lem:brw-inputs} (\ref{input:MinimumBRW})] Since A\"id\'ekon's  result
\cite{A_d_kon_2013} assumes nonlattice displacements in order to show $O(1)$-fluctuation and identify the limiting distribution of the BRW minimum (see also A\"id\'ekon--Shi~\cite[Remark after Theorem~6.1]{AidekonShi}),
we instead use the
lattice-insensitive estimate of Madaule~\cite[Theorem~2.3]{MadauleTip}  
 For
$\beta>1$, put
\[
 P_\beta(n):= \sum_{|u|_{1}=n} e^{- \beta V^{\mathrm{bd}}_1(u) }  = \sum_{i \ge 1} e^{-\beta V_i(n)}
\]
Theorem~2.3 of \cite{MadauleTip} gives a random variable $P_\beta \in (0,\infty)$ with   $1/\beta$-stable distribution ($\E[e^{-\lambda P_\beta}]
=\exp\{-c_\beta\lambda^{1/\beta}\}$), such that  
$
 n^{3\beta/2}Z^{-\beta}P_\beta(n)
 \xrightarrow[n\to\infty]{\mathrm{law}}P_\beta$.
Together with the fact $Z>0$, this implies
\[
 \log P_\beta(n)=-\frac{3\beta}{2}\log n+O_\P(1),
\]
  Since
$
 e^{-\beta V_1(n)}\leq P_\beta(n)
 \leq e^{-(\beta-1)V_1(n)}W_{\theta_*}(n)$,  and 
 $ \log W_{\theta_*}(n)=-\frac12\log n+O_\P(1)$ by Lemma~\ref{lem:brw-inputs} (\ref{input:SH}), 
it follows that
\[
 \frac32+o_\P(1)
 \leq\frac{V_1(n)}{\log n}
 \leq\frac{3\beta-1}{2(\beta-1)}+o_\P(1).
\]
Letting $\beta\to\infty$ yields
$V_1(n)/\log n\to3/2$ in probability. 
For $n\leq t\leq n+1$, monotonicity of $X_1(\cdot)$ gives
$
 V_1(n)-\kappa_*\leq V_1(t) \leq V_1(n+1)+\kappa_*$.
Thus $V_1(t)/\log t\to3/2$ in probability.  Rearranging proves
\eqref{eq:largest-fragment-input}.
\end{proof}

\begin{proof}[Proof of Lemma~\ref{lem:brw-inputs} (\ref{input:Madaule})]
Assume \eqref{non-lattice-cond}.  

\smallskip
\noindent\underline{\textit{Step 1.}}
We  show that, for any $h>0$,
the offspring point process
$  L_h:=\sum_{|u|_h=1}\delta_{V_h^{\mathrm{bd}}(u)}$
is nonlattice. 

Let $Y_h$ have its $e^{-x}$-size-biased distribution,
that is,
$\E[f(Y_h)]:=\E  [\int_{\mathbb R}e^{-x}f(x)\,L_h(\dif x) ].$
The first identity in \eqref{eq:boundary-skeleton-data} ensures that this
is a probability distribution.  If $(\xi_t)$ denotes the logarithmic size
of a tagged fragment  \cite[Section~3.2.2]{Bertoin}, then  
\begin{equation}\label{eq:size-biased-tagged-fragment}
 \E[f(Y_h)]
 =e^{h\kappa_*}\E \Bigl[  
 e^{-(\theta_*-1)\xi_h}
 f(\theta_*\xi_h-h\kappa_*) \ind{\xi_h<\infty}
  \Bigr].
\end{equation} 
By \cite[Theorem~3.2]{Bertoin}, $\xi$ is a (possibly killed) subordinator
with L\'evy measure given by 
$
 \Pi_{\xi}(\dif y)
 =\int_{\mathcal S^\downarrow}
   \sum_{i:s_i>0}s_i\,
   \delta_{\log(1/s_i)}(\dif y)\,\nu(\dif\mathbf s)$. 
Condition~\eqref{non-lattice-cond} says that $\Pi_\xi$ is not
supported on any discrete additive subgroup $d\mathbb Z$. Thus 
by the Lévy--Khintchine formula,  for any real number $\lambda \neq 0$, 
\[
 \bigl| \E[ e^{\mathrm i\lambda\xi_h} \mid \xi_{h} < \infty ] \bigr|
 =
 \exp \Bigl\{  
 h\int_{(0,\infty)}
 \bigl(\cos(\lambda y)-1\bigr)\Pi_\xi(\dif y)
 \Bigr\}  <1.  
\]
Hence, conditional on $\{\xi_h<\infty\}$,  
$ \xi_h $ has a nonlattice distribution. From  \eqref{eq:size-biased-tagged-fragment} it follows that $Y_h$ is
nonlattice too. This implies $L_h$ must be nonlattice: if $L_h$ was supported on $a+d\mathbb Z$ for some
$a\in\mathbb R$ and $d>0$ almost surely, then
$
 \P(Y_h\notin a+d\mathbb Z)
 =\E [\int_{\mathbb R\setminus(a+d\mathbb Z)}
 e^{-x}L_h(\dif x) ]=0$, 
a contradiction.

\smallskip
\noindent\underline{\textit{Step 2.}}
For a point measure $\mu$, let $\vartheta_a\mu(B):=\mu(B-a)$. Since $Z>0$ almost surely, we may set
\[
 \widehat{\mathcal E}_{t}
 :=\vartheta_{-\log Z} \mathcal E_t^V 
 =\sum_{i:X_i(t)>0}
   \delta_{\frac32\log t-V_i(t)-\log Z}.
\] 
Fix $h=1/m$ for $m \in \mathbb{Z}_{\ge 1}$. Then $ \widehat{\mathcal E}_{nh}
 =   \sum_{|u|_{h}=n}
   \delta_{\frac32\log (nh)-V_h^{\mathrm{bd}}(u)-\log Z} $ is, 
after reflection about the origin and translation by
$\frac32\log h$,  
the extremal process of the BRW $(V_h^{\mathrm{bd}}, \mathbb{T}_h)$ considered in \cite{MadauleTip}.  
By \eqref{eq:positive-genealogy}--\eqref{eq:boundary-skeleton-bounds}, and 
the required nonlattice assumption was verified in Step 1,    
\cite[Theorem~1.1]{MadauleTip} yields  a point process  $\widehat{\mathcal E} \sim \DPPP(C_V e^{-x} \dif x, \mathcal{D} )$,  independent of $Z$, whose  decoration
$\mathcal D$ is supported on $(-\infty,0]$ and has maximum zero almost
surely,    
such that 
\begin{equation} 
  \label{eq:conv-1}
\bigl(  \widehat{\mathcal E}_{n h} ,Z(nh) 
    \bigr)
  \xrightarrow[n \to \infty]{\mathrm{law}} 
 \bigl( \widehat{\mathcal E}  ,Z  \bigr) \quad  \text{ in } \ \Mloc(\R) \times \R . 
\end{equation} 
We emphasize that the law of $\widehat{\mathcal E}$  do not depend on $h=1/m$.  Indeed, taking $n=m\ell$ in the
$1/m$-skeleton gives exactly the same process at time $\ell$ as the
unit-time skeleton.  

Let $\chi_{K}\in C_c(\mathbb R)$ take values in $[0,1]$, equal to one on
$[-K,K]$, and vanish outside $[-K-1,K+1]$. Then for each $\beta \in \mathbb{R}$, and $g \in  \mathrm{BUC}(\mathbb{R})$, we have  $\chi_{K} g\exp_{\beta}\in C_c(\mathbb{R})$. The continuity mapping theorem gives 
\[ \bigl(  \widehat{\mathcal E}_{n h} ,Z(n h) ,  \langle \chi_K  g \exp_{\beta} ,  \widehat{\mathcal E}_{n h} \rangle 
    \bigr)
  \xrightarrow[n \to \infty]{\mathrm{law}} 
 \bigl( \widehat{\mathcal E} ,Z ,  \langle\chi_K g \exp_{\beta}  \,,\widehat{\mathcal E} \rangle \bigr) \quad \text{ in } \ \Mloc(\R) \times \R^2 .  \] 
For any $\beta>1$ and $h$, by \cite[Theorem~2.3]{MadauleTip},  the sequence  
 $( \langle     \exp_{\beta} ,  \widehat{\mathcal E}_{n h} \rangle  )_{n \ge 1}$ is tight in $\R$. 
This prevents atoms of $\widehat{\mathcal E}_{n h}$  from escaping to
$+\infty$, so the convergence in the first coordinate also holds in
$\Mloc((-\infty,\infty])$. 
 Moreover,  since   $ \langle  (1- \chi_K) \exp_{\beta}, \mu\rangle 
 \leq e^{- \frac{\beta-1}{2} K }  \langle \exp_{\frac{\beta+1}{2}}  , \mu\rangle + e^{- \beta K }  \langle \exp_{2 \beta}  , \mu\rangle  $ for every measure $\mu$,  we get $\langle  (1-\chi_K )  g \exp_{\beta} ,  \widehat{\mathcal E}_{n h} \rangle$ converges to zero in probability as $n \to \infty$ first, then $K \to \infty$. Combining this with \eqref{eq:conv-1}, 
 we thus obtain  
\begin{equation}\label{eq:skeleton-extremal-convergence}
 \Bigl( \widehat{\mathcal E}_{n h} ,Z(n h) ,  (\langle   f_j ,  \widehat{\mathcal E}_{n h} \rangle )_{j=1}^{k}  \Bigr)  
 \xrightarrow[n\to\infty]{\mathrm{law}}
 \Bigl(\widehat{\mathcal E},Z,
   (\langle   f_j ,  \widehat{\mathcal E}  \rangle )_{j=1}^{k}   \Bigr)  \quad \text{ in } \ \Mloc((-\infty,\infty]) \times \R^{k+1} . 
\end{equation} 
  Consequently,  set $
 \mathcal E^V:=\vartheta_{\log Z}\widehat{\mathcal E} $, and since $ \mathcal E_t^V = \vartheta_{\log Z}\widehat{\mathcal E}_{t}$,   we get 
 \begin{equation} 
\qquad \Bigl(  \mathcal{E}^{V}_{\frac{n}{m}} ,Z(\tfrac{n}{m}),
 ( \langle f_j, \mathcal{E}^{V}_{\frac{n}{m}} \rangle )_{j=1}^k  \Bigr)
  \xrightarrow[n \to \infty]{\mathrm{law}} 
 \Bigl(  \mathcal{E}^{V},Z,
( \langle f_j \, ,   \mathcal{E}^{V}  \rangle )_{j=1}^k  \Bigr)  \text{ in } \Mloc((-\infty,\infty])\times\mathbb R^{k+1},
\end{equation} 
with $\langle \exp_{\beta}, \mathcal{E}^{V} \rangle<\infty$ almost surely for $\beta> 1$.  
This implies $\langle\exp_\beta,\mathcal D\rangle<\infty$ almost surely. Otherwise the decorated Poisson representation would then contain, with positive
probability, a cluster having infinite $\exp_\beta$-mass, contradicting
$\langle\exp_\beta,\mathcal E^V\rangle<\infty$ almost surely.

 \smallskip
\noindent\underline{\textit{Step 3.}}  Set
$
 \ell=\ell_m(t):=m^{-1}\lfloor mt\rfloor$ and $s:=t-\ell\in[0,1/m)$.
Fix $\beta>1$ and $g\in\mathrm{BUC}(\mathbb R)$, and put
$f:=g\exp_\beta$.  
We claim that \eqref{eq:weighted-extremal-convergence} follows once we
prove that, for every $\eta>0$,
\begin{equation}\label{eq:dis-2-con}
 \lim_{m\to\infty}\limsup_{t\to\infty}
 \P \bigl(  
 \bigl|  \langle f,\mathcal E_t^V\rangle
 -\langle f,\mathcal E_{\ell_m(t)}^V\rangle\bigr| >\eta
 \bigr) =0.
\end{equation} 
Indeed, let $d_{\mathrm{vag}}$ be a metric for vague convergence on
$\Mloc((-\infty,\infty])$ generated by a countable family $(\varphi_i)_{i\geq1}$
in $C_c((-\infty,\infty])$. Applying \eqref{eq:dis-2-con} $\varphi_i$, since  
$\exp_{-\beta}\cdot\varphi_i|_{\mathbb R}\in\mathrm{BUC}(\mathbb R)$,
it follows that
$d_{\mathrm{vag}}(\mathcal E_t^V,\mathcal E_{\ell_m(t)}^V)$
converges to zero in probability as first $t\to\infty$ and then
$m\to\infty$. 
 Applying \eqref{eq:dis-2-con} to
$f_j=g_j\exp_{\beta_j}$, $1\leq j\leq k$, similarly gives  
$\sum_{j=1}^k|\langle f_j,\mathcal E_t^V\rangle
-\langle f_j,\mathcal E_{\ell_m(t)}^V\rangle| \to 0$ in probability in the same iterated limit. 
Moreover,  we have 
$\sum_{j=1}^k|\langle f_{j,t}-f_j,\mathcal E_t^V\rangle|
\leq\sum_{j=1}^k\|g_{j,t}-g_j\|_\infty
\langle\exp_{\beta_j},\mathcal E_t^V\rangle
\xrightarrow[t\to\infty]{\P}0$, since  
\eqref{eq:dis-2-con}  and  the tightness of
$\langle\exp_{\beta_j},\mathcal E_{\ell_m(t)}^V\rangle$ implies the   tightness of$(\langle\exp_{\beta_j},\mathcal E_t^V\rangle)_{t>0}$.
Consequently,
the desired result  follows from the convergence along the skeleton
and Slutsky's lemma. 

To prove \eqref{eq:dis-2-con}, we write 
 $\mathcal E_\ell^V=\sum_{k \ge 1}\delta_{y_k}$.  The fragmentation
property gives,  
\begin{equation}\label{eq:short-time-decomposition}
 \mathcal E_t^V
 =  \sum_{k \ge 1} \sum_{j \ge 1} \delta_{y_k-V_j^{(k)}(s) + \epsilon_{\ell,s}} \ , \quad 
 \epsilon_{\ell,s}:=\frac32\log\frac{\ell+s}{\ell},
\end{equation}
where the families $(V_j^{(k)}(s))_{j\ge 1}$ are independent copies of
$(V_j(s))_j$ and are independent of $\mathcal F^{\mathcal{I}}_\ell= \sigma (\mathcal{I}(s): 0 \le s \le t)$. Then from \eqref{eq:short-time-decomposition}, conditionally on $\mathcal F^{\mathcal{I}}_{\ell}$, we get 
\begin{equation}
   \E\bigl[|\langle f,\mathcal E_t^V\rangle
                -\langle f,\mathcal E_\ell^V\rangle|
            \mid\mathcal F^{\mathcal{I}}_{\ell}\bigr]
   \leq
   \bigl[e^{\beta\epsilon_{\ell,s}}\rho_{f,\beta}(1/m)
  +\omega_{f,\beta}(\epsilon_{\ell,s})\bigr]
   \langle\exp_\beta,\mathcal E_{\ell_m(t)}^V\rangle. \label{eq:cond-bnd-1}
\end{equation}
 
where $
 \omega_{f,\beta}(\epsilon)
 :=\sup_{y\in\mathbb R,\,|b|\leq\epsilon}
 e^{-\beta y}|f(y+b)-f(y)|$ and 
\begin{equation}
 \rho_{f,\beta}(\epsilon)
 :=\sup_{ y\in\mathbb R, 0\leq s\leq \epsilon }
 e^{-\beta y}
 \E \Bigl[   \Bigl|f(y) - \sum_{j \ge 1} f(y-V_j(s)) \Bigr| \Bigr]  \label{eq:one-parent-modulus} 
\end{equation}  

For fixed $m$, the quantity $\epsilon_{\ell,s}$ tends to zero uniformly in
$s\in [0,1/m)$ as $t\to\infty$. Note that 
\[
 \omega_{f,\beta}(\epsilon)
 \leq
 e^{\beta\epsilon}
 \sup_{y\in\mathbb R,\,|b|\leq\epsilon}|g(y+b)-g(y)|
 +(e^{\beta\epsilon}-1)\|g\|_\infty
 \xrightarrow{\epsilon\downarrow0}0.
\]  
By the convergence of $\mathcal{E}^{V}_{t}$ along each skeleton $(\frac{n}{m})_{n \ge 1}$, whose limit does not depend on
$m$, we get
\[
 \sup_{m\geq1}\limsup_{t\to\infty}
 \P \bigl(   
 \langle\exp_\beta,\mathcal E_{\ell_m(t)}^V\rangle>K
  \bigr) \le   
 \P \bigl(    
 \langle\exp_\beta,\mathcal E^V\rangle \ge K
  \bigr)  \xrightarrow{K \to \infty} 0
\]
Together with the preceding estimate \eqref{eq:cond-bnd-1} and Markov's
inequality, this uniform tightness shows that
\eqref{eq:dis-2-con} follows once we prove
\begin{equation}\label{eq:small-time-test-function}
 \rho_{f,\beta}(\epsilon)\xrightarrow{\epsilon\downarrow0}0.
\end{equation} 
\smallskip
\noindent\underline{\textit{Step 4.}}
We first record two consequences of stochastic continuity at time zero.
For every $\beta>1$, we have 
$
 \E[\sum_j e^{-\beta V_j(s)}]
 =\exp\{s(\beta\kappa_*-\kappa(\beta\theta_*))\}$, 
which converges to one uniformly for $0\leq s\leq \epsilon$ as $\epsilon\downarrow0$.
Moreover, monotonicity of $X_1$ gives
 \[ 
 \sup_{0\leq s\leq \epsilon}|V_1(s)|
 \leq \epsilon|\kappa_*|+\theta_*\log\frac1{X_1(\epsilon)}
 \xrightarrow[\epsilon\downarrow0]{\P}0 .\]
Since $e^{-\beta V_1(s)}\leq e^{\beta \epsilon\kappa_*^+}$, we also have
$e^{-\beta V_1(s)}\to1$ in $L^1$, uniformly in $s\leq \epsilon$.
Subtracting the largest-child
term from the preceding expectation yields
\[
 \sup_{0\leq s\leq \epsilon}
 \E[\sum_{j\geq2}e^{-\beta V_j(s)}]  \xrightarrow{\epsilon\downarrow0} 0 .
\]

Now we prove \eqref{eq:small-time-test-function}. For every $\tilde{\epsilon}>0$, $\sup_{ y\in\mathbb R, 0\leq s\leq \epsilon } e^{-\beta y}
 \E  [   |f(y) -   f(y-V_1(s))  |  ]$ is at  
most
\[
 \omega_{f,\beta}(\tilde{\epsilon})
 + \|g\|_{\infty} \sup_{0\leq s\leq \epsilon}
 \E \Bigl[  (1+e^{-\beta V_1(s)})
          \ind{|V_1(s)|>\tilde{\epsilon}} \Bigr] \lesssim  \omega_{f,\beta}(\tilde{\epsilon})+   \sup_{0\leq s\leq \epsilon} \P( |V_1(s)|>\tilde{\epsilon})
\]
 The remaining term 
contribute at most
$\|g\|_{\infty}\sup_{0\leq s\leq \epsilon}
 \E[\sum_{j\geq2}e^{-\beta V_j(s)}]$.  Letting $\epsilon \downarrow0$ first then $\tilde{\epsilon} \downarrow 0$ proves
\eqref{eq:small-time-test-function}, and completes the proof of Lemma~\ref{lem:brw-inputs}.
\end{proof}

\subsection{From point measures to tail-count processes}
\label{sec:pp-to-count-proof}

\begin{proof}[Proof of Lemma \ref{lem:pp-to-count}] 
Indeed,
fix $a<b$ such that $\mu(\{a,b\})=0$, and let
$a<z_1<\cdots<z_k<b$ be the atoms of $\mu$ in $(a,b)$.  Vague convergence
implies that, for all sufficiently large $n$, the atoms of $\mu_n$ in
$(a,b)$ can be written as $a<z_{n,1}<\cdots<z_{n,k}<b$, where
$z_{n,j}\to z_j$, and that
$\mu_n((b,+\infty])=\mu((b,+\infty])$. 
 Let $\lambda_n$ be the increasing
piecewise linear bijection of $[a,b]$ that fixes $a$ and $b$ and satisfies
$\lambda_n(z_j)=z_{n,j}$.  Then all sufficiently large $n$, we have 
\[
 \sup_{x\in[a,b]}|\lambda_n(x)-x|\longrightarrow0 \ , \quad 
 \mu_n((\lambda_n(x),+\infty])=\mu((x,+\infty]) , x \in [a,b]
\]
   Thus the tail-count
functions converge in $D([a,b],\mathbb Z_{\geq0})$ under the Skorokhod
$J_1$ topology.
 \end{proof}

\section*{Acknowledgement \& Statement on AI use.}
H.M. thanks Oren Louidor for his interest in this work and for several enlightening discussions.
H.M. is supported in part by a Lady Davis Fellowship at the Technion. 

\smallskip
\noindent\textbf{Statement on AI use.} The author formulated the research questions, and
outlined an initial proof framework, indicating the intended
approach for its main steps. 
OpenAI's   GPT-5.6 Sol  were used to discuss, test and refine
these ideas, supply the missing computations and technical arguments
needed to complete the proofs, identify potential gaps. Anthropic's Claude Opus 5 was used for
language editing and to generate TikZ for   figures.  
All
model-assisted material included in the manuscript was critically
reviewed, and all mathematical claims and citations were independently
verified by the author. The author   takes
full responsibility for the manuscript.

\bibliographystyle{amsplain}
\bibliography{references}

\end{document}